\newtheorem{theorem}{Theorem}[section]
\newtheorem*{theorem*}{Theorem}
\newtheorem{lemma}[theorem]{Lemma}
\newtheorem{prop}[theorem]{Proposition}
\newtheorem{cor}[theorem]{Corollary}
\theoremstyle{definition}
\newtheorem{defn}[theorem]{Definition}
\newtheorem{rem}[theorem]{Remark}
\newtheorem{example}[theorem]{Example}
\author[M. Bhattacharjee]{Monojit Bhattacharjee}
\author[R. Gupta]{Rajeev Gupta}
\author[V. Venugopal]{Vidhya Venugopal}
\address[M. Bhattacharjee]{Department of Mathematics \\
Birla Institute of Technology and Science K K Birla Goa Campus, India
}
\address[R. Gupta]{School of Mathematics and Computer Science\\
Indian Institute of Technology Goa, India}
\address[V. Venugopal]{Department of Mathematics \\
Birla Institute of Technology and Science K K Birla Goa Campus, India
 }
\keywords{$m$-isometry, $m$-concave operators, wandering subspace property, Wold-type decomposition, Dirichlet-type spaces, Cauchy dual}
\subjclass[2010]{Primary 46E20, 47B32, 47B38,  Secondary 47A50, 31C25}
\begin{document}
\title[Dirichlet type spaces and Wandering Subspace Property]{Dirichlet type spaces in the unit bidisc and Wandering Subspace Property for operator tuples}
\maketitle
\begin{abstract}
  In this article, we define Dirichlet-type space $\mathcal{D}^{2}(\boldsymbol{\mu})$ over the bidisc $\mathbb D^2$ for any measure $\boldsymbol{\mu}\in\mathcal{P}\mathcal{M}_{+}(\mathbb T^2).$ We show that the set of polynomials is dense in $\mathcal{D}^{2}(\boldsymbol{\mu})$ and the pair $(M_{z_1}, M_{z_2})$ of multiplication operator by co-ordinate functions on $\mathcal{D}^{2}(\boldsymbol{\mu})$ is a pair of commuting $2$-isometries. Moreover, the pair $(M_{z_1}, M_{z_2})$ is a left-inverse commuting pair in the following sense: $L_{M_{z_i}} M_{z_j}=M_{z_j}L_{M_{z_i}}$ for $1\leqslant i\neq j\leqslant n,$ where $L_{M_{z_i}}$ is the left inverse of $M_{z_i}$ with $\ker L_{M_{z_i}} =\ker M_{z_i}^*$, $1\leqslant i \leqslant n$. Furthermore, it turns out that, for the class of left-inverse commuting tuple $\boldsymbol T=(T_1, \ldots, T_n)$ acting on a Hilbert space $\mathcal{H}$, the joint wandering subspace property is equivalent to the individual wandering subspace property. As an application of this, the article shows that the class of left-inverse commuting pair with certain splitting property is modelled by the pair of multiplication by co-ordinate functions $(M_{z_1}, M_{z_2})$ on  $\mathcal{D}^{2}(\boldsymbol{\mu})$ for some $\boldsymbol{\mu}\in\mathcal{P}\mathcal{M}_{+}(\mathbb T^2).$

\end{abstract}

\section{Introduction}

{ An important aspect of the representation theorem of Richter \cite[Theorem 5.1]{Richter1} is its implication that understanding a class of non-self adjoint bounded linear operators on a Hilbert space requires the study of (analytic) function theory. For instance, if $T$ is a cyclic analytic $2$-isometry on a Hilbert space $\mathcal{H}$ (that is, $\|T^2h\|^2 - 2 \|Th\|^2 + \|h\|^2 = 0$ for all $h \in \mathcal{H}$ and $\cap_{n \geqslant 0} T^n \mathcal{H} = \{0\} $) then there exists a $\mathcal{H} \ominus T \mathcal{H}$ 
($=: \mathcal{E}$)-valued positive Borel measure $\mu$ on the unit circle $\mathbb T$ such that $T$ is unitarily equivalent to $M_z$ on $\mathcal{D}_{\mu}(\mathbb D),$ where $\mathcal{D}_{\mu}(\mathbb D)$ denotes the corresponding Dirichlet-type space on the open unit disc $\mathbb D$, as introduced in \cite{Richter1}. As $T$ is cyclic, here the dimension of $\mathcal{E}$ is $1.$
By dropping the cyclicity condition on $T,$
in \cite{Olo}, Olofsson identifies the unitary equivalent class of analytic $2$-isometries by the shifts on an analytic function space over $\mathbb D$ which intrinsically depends on the $\mathcal{E}$-valued spectral measure. Moreover, it is observed in \cite[Theorem 1]{Richter2} that if $T$ is analytic $2$-isometry on a Hilbert space $\mathcal H$ then $\mathcal H=\overline{\mathrm{span}}\{T^n\mathcal E: n\geqslant 0\}.$


This indicates that, on one hand, any analytic $2$-isometry is unitarily equivalent to a shift on the Dirichlet-type space, and on the other hand, it possess the wandering subspace property (defined below).


In this context, it is noteworthy that the representation theorems for the class of analytic 2-isometries available in \cite{Richter1, Olo} have become classical results. The wandering subspace property for a class of bounded operators and corresponding model spaces have had a profound influence on the development of operator theory and function theory in one variable. A key goal of multivariable operator theory is to identify classes of commuting tuples of bounded linear operators on a Hilbert space that possess the wandering subspace property and, consequently, determine their unitary equivalence class in terms of co-ordinatewise multiplication operators on an analytic function space over the polydisc $\mathbb D^n.$
}

We pause for a moment to introduce some notations and recall some useful definitions for what follows in this article. Let $\mathbb N$ and $\mathbb Z$ denote the set of all positive integers and the set of all integers respectively. For each $n\in\mathbb N,$ let $\mathbb Z^n_{\geqslant 0}$ be the set of all $n$-tuples of non-negative integers.

Let $\boldsymbol{T}=(T_1,\ldots,T_n)$ be an $n$-tuple of commuting bounded linear operators on a Hilbert space $\mathcal{H}$. A closed subspace $\mathcal{W} \subseteq \mathcal{H}$ is said to be a \textit{wandering subspace} of $\boldsymbol{T}$ if 
\[
\mathcal{W}  \perp T_1^{k_1}\cdots T_n^{k_n} \mathcal{W} \quad \quad \text{for all non-zero}~ (k_1,\ldots, k_n) \in \mathbb Z_{\geqslant 0}^n. 
\] 
Following Halmos \cite{Hal}, a wandering subspace $\mathcal{W}$ of $\boldsymbol{T}$ in $\mathcal{H}$ is said to be a \textit{generating wandering subspace} for $\boldsymbol{T}$ if 
\[ 
\mathcal{H} = \overline{\mathrm{span}} \big\{T_1^{k_1}\cdots T_n^{k_n} \mathcal{W}~:~ (k_1,\ldots, k_n) \in \mathbb Z_{\geqslant 0}^n \big\}.
\]
We say an $n$-tuple of operators $\boldsymbol{T}=(T_1,\ldots,T_n)$ acting on a Hilbert space $\mathcal{H}$ has \textit{joint wandering subspace property} and \textit{individual wandering subspace property} provided $\boldsymbol{T}$ has a generating wandering subspace in $\mathcal{H}$ and each $T_i$ for $i=1,\ldots,n,$ has generating wandering subspace, respectively. We say that an $n$-tuple of operators $\boldsymbol{T}=(T_1,\ldots,T_n)$ acting on a Hilbert space $\mathcal{H}$ is cyclic if there exists a vector $x_0\in\mathcal H$ such that $$\mathcal{H} = \overline{\mathrm{span}} \big\{T_1^{k_1}\cdots T_n^{k_n} x_0~:~ (k_1,\ldots, k_n) \in \mathbb Z_{\geqslant 0}^n \big\}.$$

One of the major motivation behind studying wandering subspaces is their natural connection with invariant subspaces. The existence of {wandering subspace property} for the restriction of multiplication by the co-ordinate function $M_z$ to any invariant subspace of the Hardy space $H^2(\mathbb D)$,  
the Bergman space $A^2(\mathbb D)$, and the Dirichlet-type space 
$\mathcal{D}_{\mu}(\mathbb{D})$ for any finite positive Borel measure $\mu$ on the unit circle $\mathbb T$ are shown by Beurling  in \cite{B}, by Aleman, Richter and Sundberg in \cite{ARS}, and by Richter in \cite{Richter2}, respectively. Later on, Shimorin proved the existence of this property for an abstract class of left-invertible operators (see \cite{Shimorin}).  

In multi-variable operator theory, characterization of invariant subspaces for $n$-tuples of operators is one of the fascinating and challenging topic from the last few decades. Among these, characterizing $n$-tuples of commuting bounded linear operators on a Hilbert space which possess the {joint wandering subspace property} has emerged as one of the most significant and intriguing questions. 
In \cite{Rudin-poly}, Rudin demonstrates that the existence of the {wandering subspace property} of general invariant subspace fails even for a pair of commuting isometries. 
In particular, he constructs an invariant subspace of Hardy space over bidisc not having {joint wandering subspace property}. 
For similar kind of examples on analytic functions over Euclidean unit ball, we refer the reader to \cite{BEKS}. Consequently, a straightforward generalization of the single variable result is not possible.  
Meanwhile, the existence of {joint wandering subspace property} for $n$-tuple of bounded linear operators $(T_1,\ldots,T_n)$ acting on a Hilbert space $\mathcal{H}$ with individual wandering subspace property has been established under an additional assumption of doubly commutativity, that is, $T_iT_j = T_jT_i$ and $T_i^*T_j = T_jT_i^*$ for all $1 \leqslant i < j \leqslant n$.
For more details, we refer the reader to \cite{Ma, RT, SSW, ABJS} and references therein. 

{In this article, we establish the analogous result for a much larger class of operator tuples namely left-inverse commuting tuple defined as below.}
\begin{defn}[Left-inverse commuting tuple]
        A tuple $(T_1,\ldots, T_n)$ of left invertible commuting operators is said to be a \textit{left-inverse commuting tuple} if     \begin{eqnarray}\label{Cauchy-dual commuting pairs}
      L_iT_j=T_jL_i \quad \mbox{for } 1\leqslant i\neq j\leqslant n,
  \end{eqnarray}
  where $L_j$ denotes the adjoint of the Cauchy-dual of $T_j$, that is, $L_j = (T_j^*T_j)^{-1}T_j^*.$ 
    \end{defn}
Note that, for each $1\leqslant j \leqslant n,$ $L_j$ is a left inverse of $T_j$ in the above definition. 
{The inclusion of the class of doubly commuting tuples inside the class of left-inverse commuting tuples is automatic. Furthermore, in Example \ref{counter example - weighted shift} and Example \ref{Example-not doubly commuting}, it is demonstrated that any operator tuple in the class of operator-valued weighted shifts with invertible weights, as well as any operator tuple in the class of tuple of multiplication by co-ordinate functions on Dirichlet-type spaces over the bidisc introduced in \cite{Sameer et. al.}, belong to that of left-inverse commuting tuples but is not necessarily doubly commuting.

} One of the main results of this article is the following theorem, which is also the content of Theorem \ref{wandering subspace property-abstract pair} in what follows:
\begin{theorem}
    Let $\boldsymbol T=(T_1, \ldots, T_n)$ be a left-inverse commuting tuple on a Hilbert space $\mathcal H$. Then the following are equivalent: \begin{itemize}
        \item[(a)] the tuple $\boldsymbol T$ has joint wandering subspace property.
        \item[(b)] the tuple $\boldsymbol T$ has individual wandering subspace property. 
    \end{itemize}
\end{theorem}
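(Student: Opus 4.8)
The plan is to first reduce both properties to statements about a canonical family of wandering subspaces, then to isolate a structural reducing lemma that drives everything, and finally to run the two implications—one by a direct invariance argument, the other by induction on $n$. Since each $T_j$ is left invertible, $T_j\mathcal H$ is closed and $\mathcal E_j:=\ker T_j^{*}=\mathcal H\ominus T_j\mathcal H$ is the only candidate for a generating wandering subspace of $T_j$: if $\mathcal W_0$ is any generating wandering subspace for $T_j$, then $\overline{\operatorname{span}}\{T_j^{k}\mathcal W_0:k\geqslant 1\}=T_j\mathcal H$ forces $\mathcal W_0=\mathcal E_j$. The same bookkeeping for the tuple shows that the only candidate generating wandering subspace for $\boldsymbol T$ is $\mathcal W:=\bigcap_{j=1}^{n}\mathcal E_j=\mathcal H\ominus\overline{\textstyle\sum_j T_j\mathcal H}$, and that $\mathcal W$ is automatically wandering. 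Hence (a) is equivalent to ``$\mathcal W$ generates'' and (b) to ``$\mathcal E_i$ generates for each $i$'', reducing the theorem to a comparison of the closed spans $\overline{\operatorname{span}}\{T_1^{k_1}\cdots T_n^{k_n}\mathcal W\}$ and $\overline{\operatorname{span}}\{T_i^{k}\mathcal E_i\}$.

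The core is the following reducing lemma: for $i\neq j$ the subspace $\mathcal E_j$ reduces $T_i$. I would prove $T_i\mathcal E_j\subseteq\mathcal E_j$ by writing $T_j^{*}=(T_j^{*}T_j)L_j$ and invoking the defining relation $L_jT_i=T_iL_j$, so that for $x\in\mathcal E_j=\ker L_j$ one gets $T_j^{*}T_ix=(T_j^{*}T_j)T_iL_jx=0$; and $T_i\mathcal E_j^{\perp}\subseteq\mathcal E_j^{\perp}$ is immediate from $\mathcal E_j^{\perp}=T_j\mathcal H$ together with $T_iT_j=T_jT_i$. Invariance of both $\mathcal E_j$ and its orthogonal complement means $T_i$ commutes with the orthogonal projection $P_j$ onto $\mathcal E_j$, and therefore so do $T_i^{*}$, $T_i^{*}T_i$, $(T_i^{*}T_i)^{-1}$ and $L_i$; in particular all of these operators restrict to $\mathcal E_j$.

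With this in hand, (a)$\Rightarrow$(b) is a short invariance argument. Fix $i$ and set $\mathcal M_i:=\overline{\operatorname{span}}\{T_i^{k}\mathcal E_i:k\geqslant 0\}$, which is $T_i$-invariant by construction. For $j\neq i$ the lemma gives $T_j\mathcal E_i\subseteq\mathcal E_i\subseteq\mathcal M_i$, whence $T_j\mathcal M_i\subseteq\mathcal M_i$ after commuting $T_j$ past the powers of $T_i$. Thus $\mathcal M_i$ is invariant under the whole tuple and contains $\mathcal W\subseteq\mathcal E_i$; since $\mathcal W$ generates by (a), we conclude $\mathcal M_i\supseteq\overline{\operatorname{span}}\{T_1^{k_1}\cdots T_n^{k_n}\mathcal W\}=\mathcal H$, so $\mathcal E_i$ generates and $T_i$ has the wandering subspace property.

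For (b)$\Rightarrow$(a) I would induct on $n$, the case $n=1$ being trivial. By the lemma, $T_1,\dots,T_{n-1}$ all reduce $\mathcal E_n$, so their restrictions $S_i:=T_i|_{\mathcal E_n}$ form a commuting tuple on $\mathcal E_n$ whose adjoints and left inverses are the restrictions $T_i^{*}|_{\mathcal E_n}$ and $L_i|_{\mathcal E_n}$; consequently $(S_1,\dots,S_{n-1})$ is again left-inverse commuting. Because $T_i$ reduces $\mathcal E_n$, compressing the dense span $\overline{\operatorname{span}}\{T_i^{k}\mathcal E_i\}=\mathcal H$ by $P_n$ shows each $S_i$ inherits the individual wandering subspace property on $\mathcal E_n$, with wandering subspace $\mathcal E_i\cap\mathcal E_n$. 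The induction hypothesis then gives $\mathcal E_n=\overline{\operatorname{span}}\{S_1^{k_1}\cdots S_{n-1}^{k_{n-1}}\mathcal W'\}$ with $\mathcal W'=\bigcap_{i<n}(\mathcal E_i\cap\mathcal E_n)=\mathcal W$, and feeding this into $\mathcal H=\overline{\operatorname{span}}\{T_n^{k_n}\mathcal E_n\}$ (the wandering subspace property of $T_n$) yields $\mathcal H=\overline{\operatorname{span}}\{T_1^{k_1}\cdots T_n^{k_n}\mathcal W\}$, which is (a). The main obstacle is precisely the reducing lemma and its use here: everything hinges on $\mathcal E_j$ being not merely invariant but \emph{reducing} for the other $T_i$, since only then do $T_i^{*}$, $T_i^{*}T_i$ and $L_i$ restrict to $\mathcal E_n$, keeping the compressed tuple left-inverse commuting and individually wandering. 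This is where the hypothesis $L_iT_j=T_jL_i$, rather than mere commutativity of the $T_i$, is indispensable, and it is the step I would write out most carefully.
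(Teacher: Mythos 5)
Your proposal is correct and takes essentially the same route as the paper: your reducing lemma is precisely the paper's Corollary \ref{reducing condition} (proved the same way, combining $L_jT_i=T_iL_j$ with plain commutativity), and your compression of the dense span $\overline{\operatorname{span}}\{T_i^{k}\ker T_i^*\}$ by $P_{\ker T_n^*}$ is exactly the paper's identity $\ker T_1^* = P_{\ker T_1^*}\big(\bigvee_{k_2\geqslant 0} T_2^{k_2}\ker T_2^*\big)=\bigvee_{k_2\geqslant 0} T_2^{k_2}(\ker T_1^*\cap\ker T_2^*)$, with your induction being the paper's induction run top-down (restricting the tuple to the reducing subspace $\ker T_n^*$) rather than bottom-up (adjoining one operator at a time). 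The only added material is your explicit uniqueness argument identifying $\ker T_i^*$ and $\bigcap_i\ker T_i^*$ as the only possible generating wandering subspaces, a standard fact the paper uses implicitly.
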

This significantly extends our understanding of the class of commuting tuple of bounded linear operators on a Hilbert space that possess the wandering subspace property. In particular, we prove a multi-variable generalization of Shimorin's result \cite{Shimorin}, which turns out to be a generalization of all aforesaid results in this regard. 
Theorem \ref{wandering subspace property-abstract pair} in this article, in particular, proves that if $(T_1,\ldots, T_n)$ is a left-inverse commuting tuple of analytic $2$-isometries on a Hilbert space $\mathcal{H}$ then $\cap_{i=1}^{n}\ker T_i^*$ serves as a wandering subspace for the the tuple $(T_1,\ldots, T_n).$ 
  In \cite[Corollary 2.4]{ABJS}, authors have proven that if $(T_1,\ldots, T_n)$ is a doubly commuting tuple of bounded analytic $2$-isometries then $\cap_{i=1}^{n}\ker T_i^*$ serves as a wandering subspace for $(T_1,\ldots, T_n).$ 
  Note that if a tuple $(T_1,\ldots, T_n)$ is doubly commuting then it necessarily satisfies \eqref{Cauchy-dual commuting pairs}. In this sense, Theorem \ref{wandering subspace property-abstract pair} generalizes \cite[Corollary 2.4]{ABJS}. 
  In Example \ref{counter example - weighted shift} and Example \ref{Example-not doubly commuting}, we present classes of examples of analytic $2$-isometric left-inverse commuting tuple $(T_1,\ldots, T_n)$ which are not doubly commuting. 

Along with this, we develop a natural bidisc counter part of the existing theory of the Dirichlet-type spaces over unit disc introduced by Richter \cite{Richter2, Richter1} with an explicit description of the norm. 
We also show that it becomes a model space for a class of pair of cyclic analytic $2$-isometries, see Theorem \ref{Model theorem}. It should be noted here that the model theorem Theorem \ref{Model theorem} in the picture can not be obtained from the model theorem of Richter for cyclic analytic $2$-isometries \cite[Theorem 5.1]{Richter1}, as $T_1$ and $T_2$ individually are not cyclic, {and splitting property in Theorem \ref{Model theorem} is very crucial for it to work}. With the help of the model theorem, we establish a connection between the doubly commuting pairs of analytic $2$-isometries and the class of analytic $2$-isometries introduced in this article.


\section{Dirichlet-type space over bidisc}\label{Dirichlet-type space over bidisc}
Let $\mathcal M_+(\mathbb T)$ denote the set of all positive finite Borel measures on the unit circle $\mathbb T.$ Let $\mathcal{PM}_{+}(\mathbb T^2)$ denote the set of all measures $\boldsymbol \mu$ on $\mathbb T^2$ such that $\boldsymbol \mu=\mu_1\times \mu_2,$ where $\mu_1$ and $\mu_2$ are positive finite Borel measures on $\mathbb T,$ i.e.
    \[
\mathcal{PM}_{+}(\mathbb T^2): = \left\{\boldsymbol{\mu} = \mu_1 \times \mu_2: \mu_1, \mu_2\in \mathcal M_+(\mathbb T)\right\}. \] 
    We define the Poisson kernel over $\mathbb D^2$ as a product of Poisson kernels over $\mathbb D,$ see also \cite[Chapter 2]{Rudin-poly}. More precisely, for any $z=(z_1,z_2) \in \mathbb D^2$ and  $\xi = (\xi_1,\xi_2) \in \mathbb T^2$, $\boldsymbol P(z, \xi) := P(z_1,\xi_1) ~ P(z_2,\xi_2),$
       where 
       $$P(z_i,\xi_i) = \frac{(1-|z_i|^{2})}{|\xi_i - z_i|^{2}}, \quad i=1,2.$$ 
       For any $\boldsymbol{\mu} \in \mathcal{PM}_{+}(\mathbb T^2)$, we define the \textit{Poisson integral} $\boldsymbol P_{\boldsymbol{\mu}}(z_1,z_2) := P_{\mu_1}(z_1)$ $P_{\mu_2}(z_2),$ for $(z_1,z_2)\in\mathbb D^2$, where 
       $$P_{\mu_i}(z_i) = \frac{1}{2\pi} \int_0^{2\pi} P(z_i,e^{it}) d\mu_i(t), \quad i=1,2.$$
Let $\mathcal{O}(\mathbb{D}^{2})$ denote the set of all complex valued holomorphic functions on the unit bidisc $\mathbb{D}^2.$
    For any $f \in \mathcal{O}(\mathbb{D}^{2})$ and for $\boldsymbol{\mu} \in \mathcal{PM}_{+}(\mathbb T^2),$ define
    \begin{eqnarray}\label{seminorm-first integral}
        D_{\boldsymbol{\mu}, 1}^{(2)}(f) &:= \lim_{r \rightarrow{1}^-} \frac{1}{2\pi}\iint_{\mathbb{D} \times \mathbb{T}} \left|\frac{\partial{f}}{\partial{z_1}} (z_1, re^{it})\right|^{2} P_{\mu_1}(z_1)~ dA(z_1)~ dt,    
        \end{eqnarray}
    \begin{eqnarray}\label{seminorm-second integral}  D_{\boldsymbol{\mu}, 2}^{(2)}(f) &:= \lim_{r \rightarrow{1}^-} \frac{1}{2\pi}\iint_{\mathbb{T} \times \mathbb{D}} \left|\frac{\partial{f}}{\partial{z_2}} (re^{it},z_2)\right|^{2} P_{\mu_2}(z_2)~ dA(z_2)  ~dt,    
    \end{eqnarray}
        and 
    \begin{eqnarray}\label{seminorm-third integral}
        D_{\boldsymbol{\mu}, 3}^{(2)}(f) &:= \iint_{\mathbb{D}^2} \left| \frac{\partial^2{f}}{\partial{z_1}\partial{z_2}} (z_1,z_2)\right|^{2} \boldsymbol P_{\boldsymbol{\mu}}(z_1,z_2) ~ dA(z_1) dA(z_2),    
        \end{eqnarray}    
     where $dA$ denotes the normalized Lebesgue area measure on the unit disc $\mathbb D$. 
    Wherever needed, we shall use the short hand notations $\partial_1 f, \partial_2 f$ and $\partial_1\partial_2 f$ in place of $\frac{\partial{f}}{\partial{z_1}}, \frac{\partial{f}}{\partial{z_2}}$ and $\frac{\partial^2{f}}{\partial{z_1}\partial{z_2}}$ respectively. Whenever it is convenient, we shall denote $\boldsymbol{z}=(z_1,z_2)$ and $d\boldsymbol{A}(\boldsymbol z)=dA(z_1)dA(z_2).$
    We define the Dirichlet-type integral over $\mathbb D^2$ as 
    \begin{align*}
     D^{(2)}_{\boldsymbol{\mu}}(f):=  D_{\boldsymbol{\mu}, 1}^{(2)}(f)+ D_{\boldsymbol{\mu}, 2}^{(2)}(f)+ D_{\boldsymbol{\mu}, 3}^{(2)}(f).
    \end{align*}
    We define the Dirichlet-type space on the unit bidisc with respect to the non-zero measure $\boldsymbol{\mu}$ to be the following space of analytic functions 
\begin{align*}
     \mathcal{D}^{2}(\boldsymbol{\mu}): = \{f\in \mathcal{O}(\mathbb{D}^{2}):   D^{(2)}_{\boldsymbol{\mu}}(f) < \infty \}.
\end{align*}
   For $\boldsymbol{\mu} =0,$ we set $\mathcal{D}^{2}(\boldsymbol{\mu)} = H^2(\mathbb D^2),$ the Hardy space over the unit bidisc. The function  $D^{(2)}_{\boldsymbol{\mu}}(\cdot)$ defines a semi-norm on $\mathcal{D}^{2}(\boldsymbol{\mu}).$
    In Lemma \ref{Dirichlet contained in Hardy}, we prove that Dirichlet-type space over unit bidisc with respect to any product positive measure $\boldsymbol{\mu}$ on $\mathbb T^2$ is contained in the Hardy space over unit bidisc as a subset. With this information, we can convert the semi-norm $D^{(2)}_{\boldsymbol{\mu}}(\cdot)$ into a norm in a traditional way by adding the norm borrowed from $H^2(\mathbb D^2)$ to it. More precisely, 
    for any $f \in \mathcal{D}^{2}(\boldsymbol{\mu}),$ we define the norm  $\|f\|_{\boldsymbol{\mu}}$ by setting
\begin{align*}
     \|f\|_{\boldsymbol{\mu}}^{2}= \|f\|_{{H}^{2}(\mathbb{D}^{2})}^{2} + D^{(2)}_{\boldsymbol{\mu}}(f). 
\end{align*}
It is observed in Theorem \ref{Dirichlet spaces are RKHS} that for any $\boldsymbol{\mu}\in\mathcal{PM}_{+}(\mathbb T^2)$, the space $(\mathcal{D}^{2}(\boldsymbol{\mu}), \|\cdot\|_{\boldsymbol{\mu}})$ is a reproducing kernel Hilbert space. 
When $\boldsymbol{\mu}$ is the Lebesgue measure on $\mathbb T^2$, then $\mathcal{D}^{2}(\boldsymbol{\mu})$ turns out to be the classical Dirichlet space over bidisc and is denoted by $\mathcal{D}^{2}$. The associated  kernel function $K_{\mathcal{D}^{2}}$ is given by
\begin{equation}\label{kernel-classical Dirichlet over bidisc}    K_{\mathcal{D}^{2}}\big((z_1,z_2),(w_1,w_2)\big)= 
    K_{\mathcal D}(z_1,w_1) K_{\mathcal D}(z_2,w_2), \quad (z_1,z_2),(w_{1},w_{2}) \in \mathbb{D}^{2},
\end{equation} 
    where $K_{\mathcal D}$ is the reproducing kernel for the classical Dirichlet space $\mathcal D$ on the unit disc $\mathbb D$. 

    In a recent article \cite{Sameer et. al.}, for any $\mu_1,\mu_2\in\mathcal M_+(\mathbb T)$, the Dirichlet-type space $\mathcal{D}(\mu_1,\mu_2)$ over the bidisc is defined. In their formulation of $\mathcal{D}(\mu_1, \mu_2)$, they considered the semi-norm $\mathcal{D}_{\mu_1,\mu_2}(\cdot)$, which is determined solely by the terms specified in equations \eqref{seminorm-first integral} and \eqref{seminorm-second integral}. 
    The primary objective of their article was to analyze and propose a suitable model for a class of toral $2$-isometries. By adopting their chosen norm, the authors in \cite{Sameer et. al.} achieve a squared norm for a function $f$ in their version of the classical Dirichlet space over the bidisc given by $\sum_{m,n}|\hat{f}(m,n)|^2(n+m+1)$. 
    This differs from that of  $\mathcal{D} \otimes \mathcal{D}$, which might be a very natural choice for the classical Dirichlet space over the bidisc.
Our motivation to introduce the quantity in equation \eqref{seminorm-third integral} is to restore this property, as indicated in equation \eqref{kernel-classical Dirichlet over bidisc}. Consequently, the class of commuting tuples $(T_1,T_2)$ that we investigate in this article differs significantly from those studied in \cite{Sameer et. al.}. 
Throughout this article, we occasionally refer to results obtained in \cite{Sameer et. al.} or incorporate some of their arguments. In such instances, we explicitly acknowledge the source.

  The multiplication by co-ordinate functions $z_1$ and $z_2$ are bounded analytic $2$-isometries on $\mathcal{D}^{2}(\boldsymbol{\mu}).$ These operators further satisfy \eqref{Cauchy-dual commuting pairs}. It turns out that the set of polynomials densely sits in the Hilbert space $(\mathcal{D}^{2}(\boldsymbol{\mu}), \|\cdot\|_{\boldsymbol{\mu}})$. Moreover, we observe in Lemma \ref{inner product- monomials - in term of single variable} that 
  \begin{eqnarray*}
      \langle z_1^m z_2^n, z_1^p z_2^q \rangle_{\boldsymbol{\mu}} = \langle z_1^m, z_1^p \rangle_{\mathcal{D}(\mu_1)} \langle z_2^n, z_2^q \rangle_{\mathcal{D}(\mu_2)},
  \end{eqnarray*}
  for any $m,n,p,q\in\mathbb Z_{\geqslant 0}.$ 
  Here $\mathcal D(\mu_j)$ denotes the Dirichlet-type space over the unit disc as defined by Richter in \cite{Richter1}.
  
  Furthermore, in Theorem \ref{Model theorem}, we prove that if $(T_1,T_2)$ is a left-inverse commuting pair of analytic $2$-isometries with 
  \begin{eqnarray*}
      \langle T_1^m T_2^n x_0, T_1^p T_2^q x_0 \rangle = \langle T_1^m x_0, T_1^p x_0 \rangle \langle T_2^n x_0, T_2^q x_0\rangle, \quad m,n,p,q\in\mathbb Z_{\geqslant 0},
  \end{eqnarray*}
  for some unit vector $x_0\in\ker T_1^*\cap \ker T_2^*,$   then the pair $(T_1,T_2)$ is unitarily equivalent to $(M_{z_1},M_{z_2})$ on $\mathcal{D}^{2}(\boldsymbol{\mu)}$ for some $\boldsymbol{\mu}\in \mathcal P\mathcal M_+(\mathbb T^2).$

\section{A few properties of $\mathcal{D}^{2}(\boldsymbol{\mu})$} 
In what follows, it will be useful to set the following notations: for any $R, R_1, R_2\in (0,1),$ set
    \begin{align*}
       D_{\boldsymbol{\mu}, 1}^{(2)}(f,R) &:= \lim_{r \rightarrow{1}^-} \frac{1}{2\pi}\iint_{R\mathbb{D} \times \mathbb{T}} \left|\partial_1{f}(z_1, re^{it})\right|^{2} P_{\mu_1}(z_1)~ dA(z_1) ~dt\\
        D_{\boldsymbol{\mu}, 2}^{(2)}(f, R) &:= \lim_{r \rightarrow{1}^-} \frac{1}{2\pi}\iint_{\mathbb{T} \times R\mathbb{D}} \left|\partial_2{f}(re^{it},z_2)\right|^{2} P_{\mu_2}(z_2)~ dA(z_2)  ~dt\\
         D_{\boldsymbol{\mu}, 3}^{(2)}(f, R_1, R_2) &:= \iint_{R_1\mathbb{D}\times R_2\mathbb{D}} \left| \partial_1\partial_2{f}(z_1,z_2)\right|^{2} \boldsymbol{P}_{\boldsymbol{\mu}}(z_1,z_2) ~ dA(z_1) dA(z_2).
    \end{align*}
    If $R_1=R_2=R,$ then $D_{\boldsymbol{\mu}, 3}^{(2)}(f, R_1, R_2)$ will simply be denoted by $D_{\boldsymbol{\mu}, 3}^{(2)}(f, R).$ 

    The norm on $\mathcal{D}^{2}(\boldsymbol{\mu})$ satisfies the parallelogram identity, i.e.,   $\|f+g\|^2_{\boldsymbol{\mu}}+\|f-g\|^2_{\boldsymbol{\mu}} = 2(\|f\|^2_{\boldsymbol{\mu}}+\|g\|^2_{\boldsymbol{\mu}}),$
    and therefore $(\mathcal{D}^{2}(\boldsymbol{\mu}), \|\cdot\|_{\boldsymbol{\mu}})$ is an inner product space. We shall denote the inner product of $\mathcal{D}^{2}(\boldsymbol{\mu})$ by $\langle \cdot , \cdot \rangle_{\boldsymbol \mu}$. Thus for $f,g \in \mathcal{D}^{2}(\boldsymbol{\mu})$, we set 
\begin{equation*}
    \langle f, g \rangle_{\boldsymbol{\mu}}= \langle f,g \rangle_{H^2(\mathbb D^2)}+\langle f,g \rangle_{\boldsymbol{\mu},1}+ \langle f,g \rangle_{\boldsymbol{\mu},2}+\langle f,g \rangle_{\boldsymbol{\mu},3},
\end{equation*}
    where we choose to denote 
\begin{align*}
    \langle f,g \rangle_{{\boldsymbol{\mu},1}} :&= \lim _{r \rightarrow 1^{-}} \frac{1}{2 \pi} \iint_{\mathbb{D} \times \mathbb T} \partial_1 f(z_1, re^{it}) ~ \overline{\partial_1 g(z_1, re^{it})}~  P_{\mu_1}(z_1) dA(z_1) dt  \\
    \langle f,g \rangle_{\boldsymbol{\mu},2}:&= \lim _{r \rightarrow 1^{-}} \frac{1}{2 \pi} \iint_{\mathbb{T} \times \mathbb D} \partial_2 f(re^{it},z_2)~ \overline{\partial_2 g(re^{it},z_2)} ~  P_{\mu_2}(z_2) dA(z_2) dt \\
    \langle f,g \rangle_{\boldsymbol{\mu},3}:&=\iint_{\mathbb D^2} \partial_1\partial_2 f(\boldsymbol{z}) ~\overline{\partial_1 \partial_2 g(\boldsymbol{z})} ~\boldsymbol{P}_{\boldsymbol{\mu}}(\boldsymbol{z}) d\boldsymbol{A}(\boldsymbol{z}).
\end{align*}

We start with the following important lemma.
\begin{lemma}\label{Dirichlet contained in Hardy}
    For any $\boldsymbol{\mu} \in PM_{+}(\mathbb T^2), ~~\mathcal{D}^{2}(\boldsymbol{\mu}) \subseteq H^2(\mathbb D^2).$
\end{lemma}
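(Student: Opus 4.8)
The plan is to pass to Taylor coefficients and show that finite Dirichlet-type energy forces square-summability of the coefficients, which is exactly membership in $H^2(\mathbb D^2)$. Write $f(z_1,z_2)=\sum_{m,n\geqslant 0} a_{m,n} z_1^m z_2^n$ for the Taylor expansion, so that the assertion $f\in H^2(\mathbb D^2)$ is precisely $\sum_{m,n}|a_{m,n}|^2<\infty$. If $\boldsymbol\mu=0$ there is nothing to prove, since $\mathcal D^2(0)=H^2(\mathbb D^2)$ by convention; so I assume $\boldsymbol\mu=\mu_1\times\mu_2\neq 0$, whence $\mu_1,\mu_2\in\mathcal M_+(\mathbb T)$ are both non-zero. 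Since $D^{(2)}_{\boldsymbol\mu}(f)=D^{(2)}_{\boldsymbol\mu,1}(f)+D^{(2)}_{\boldsymbol\mu,2}(f)+D^{(2)}_{\boldsymbol\mu,3}(f)$ is a sum of three manifestly non-negative quantities, finiteness of the total forces each summand to be finite; in particular $D^{(2)}_{\boldsymbol\mu,1}(f),D^{(2)}_{\boldsymbol\mu,2}(f)<\infty$. Only these two seminorms will be used (the third is irrelevant for the containment).

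The first step is to compute $D^{(2)}_{\boldsymbol\mu,1}(f)$ in terms of the $a_{m,n}$. Fixing $r<1$ and expanding $\partial_1 f(z_1,re^{it})=\sum_{m\geqslant 1,\,n} m\,a_{m,n}z_1^{m-1}r^n e^{int}$, I would carry out the $z_1$-integral against $P_{\mu_1}$ by the standard polar computation (using $P_{\mu_1}(z_1)=\sum_{k}\widehat{\mu_1}(k)|z_1|^{\,|k|}e^{ik\arg z_1}$), then average in $t$ by Parseval, and finally let $r\to 1^-$. This produces the intermediate form $\sum_{m,m'}\min(m,m')\,\widehat{\mu_1}(m'-m)\sum_n a_{m,n}\overline{a_{m',n}}$. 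Resumming via the elementary identity $\min(m,m')=\#\{\,j\geqslant 1: j\leqslant m,\ j\leqslant m'\,\}$ gives
\begin{equation*}
 D^{(2)}_{\boldsymbol\mu,1}(f)=\frac{1}{2\pi}\int_{\mathbb T}\ \sum_{j\geqslant 1}\ \sum_{n\geqslant 0}\ \Big|\sum_{m\geqslant j} a_{m,n}\,e^{ims}\Big|^2\, d\mu_1(s),
\end{equation*}
and, by the symmetric computation,
\begin{equation*}
 D^{(2)}_{\boldsymbol\mu,2}(f)=\frac{1}{2\pi}\int_{\mathbb T}\ \sum_{k\geqslant 1}\ \sum_{m\geqslant 0}\ \Big|\sum_{n\geqslant k} a_{m,n}\,e^{int}\Big|^2\, d\mu_2(t).
\end{equation*}

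With these identities the extraction of square-summability is quick. Since the left-hand integral is finite and $\mu_1\neq 0$, the non-negative integrand is finite $\mu_1$-almost everywhere; I fix a point $s_0$ where it is finite and set $T^{(n)}_{j}:=\sum_{m\geqslant j} a_{m,n}e^{ims_0}$, so that $\sum_{j\geqslant 1}\sum_{n}|T^{(n)}_j|^2<\infty$. The telescoping relation $a_{j,n}e^{ijs_0}=T^{(n)}_j-T^{(n)}_{j+1}$ gives $|a_{j,n}|^2\leqslant 2|T^{(n)}_j|^2+2|T^{(n)}_{j+1}|^2$, whence
\begin{equation*}
 \sum_{m\geqslant 1,\,n\geqslant 0}|a_{m,n}|^2\ \leqslant\ 4\sum_{j\geqslant 1}\sum_{n\geqslant 0}|T^{(n)}_j|^2\ <\ \infty.
\end{equation*}
The symmetric argument applied to $D^{(2)}_{\boldsymbol\mu,2}(f)$ at a suitable point $t_0$ yields $\sum_{m\geqslant 0,\,n\geqslant 1}|a_{m,n}|^2<\infty$. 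These two sums together account for every index $(m,n)\neq(0,0)$, and the remaining term $|a_{0,0}|^2=|f(0,0)|^2$ is finite; hence $\sum_{m,n}|a_{m,n}|^2<\infty$, i.e. $f\in H^2(\mathbb D^2)$.

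The main obstacle is the rigorous justification of the two coefficient identities for a general $f\in\mathcal O(\mathbb D^2)$, which a priori need not possess boundary values: the derivation interchanges the limit $r\to 1^-$, the angular Parseval step, the integration against $\mu_1$, and two infinite summations. I would handle this by working first with the truncated quantities $D^{(2)}_{\boldsymbol\mu,1}(f,R)$ already introduced above, where every integrand is bounded and every sum finite, so that Tonelli's theorem (all terms non-negative) legitimises each interchange; letting $R\to 1^-$ and $r\to 1^-$ and invoking monotone convergence (each truncated expression increases to its limit) then delivers the identities in $[0,\infty]$. The one genuinely new idea beyond these routine convergence checks is the telescoping observation, which converts the mere $\mu_i$-almost-everywhere finiteness of the tail sums into honest square-summability of the Taylor coefficients.
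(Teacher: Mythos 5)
Your overall outline (tail sums plus telescoping) is workable, but the step you yourself flag as the main obstacle is where the proof genuinely breaks, and the fix you propose does not repair it. The identity
\begin{equation*}
 D^{(2)}_{\boldsymbol\mu,1}(f)=\frac{1}{2\pi}\int_{\mathbb T}\ \sum_{j\geqslant 1}\ \sum_{n\geqslant 0}\ \Big|\sum_{m\geqslant j} a_{m,n}\,e^{ims}\Big|^2\, d\mu_1(s)
\end{equation*}
cannot be extracted from the truncated quantities by ``Tonelli plus monotone convergence,'' for two separate reasons. First, the inner series $\sum_{m\geqslant j}a_{m,n}e^{ims}$ is only a formal trigonometric series: its convergence at even a single point is essentially equivalent to the square-summability you are trying to prove, so the right-hand side is not well defined before one knows $f\in H^2(\mathbb D^2)$. (This is why, in the one-variable literature this formula comes from, such boundary-value identities are proved \emph{after} the containment $\mathcal D(\mu)\subseteq H^2$ is established; using the identity to prove the containment is circular.) Second, in the truncated expression both the integrand $\big|\sum_{m\geqslant j}a_{m,n}R^m e^{ims}\big|^2$ and the measure $P_{\mu_1}(Re^{is})\,ds$ move with $R$. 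The full sum over $j,n$ is indeed nondecreasing in $R$ (it is an integral over growing domains), but the individual terms are not, so there is no termwise monotone passage to the limit, and weak-$*$ convergence of $P_{\mu_1}(Re^{is})\,ds$ to $d\mu_1$ yields no Fatou-type inequality when the integrand varies simultaneously. Concretely, in the one-variable analogue with $\mu_1=\delta_{\theta_0}$ and $f(z)=z^j-e^{-i\theta_0}z^{j+1}$, the $j$-th truncated term equals $(1-R^2)R^{2j}$ (up to normalization), which increases and then decreases to $0$ as $R\to 1^-$; the identity survives in the limit only because of cancellation across different $j$'s, not by any monotone or dominated convergence.

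The good news is that your telescoping idea can be salvaged by never leaving the open disc. For fixed $R,r<1$, set $g_{j,n,R}(s):=\sum_{m\geqslant j}a_{m,n}R^m e^{ims}$ (an absolutely convergent series). Expanding Lemma \ref{first integral and tail} in $t$ by Parseval, and using that each resulting term increases in $r$, one gets for every fixed $r<1$
\begin{equation*}
\frac{1}{2\pi}\int_0^{2\pi}\Big(\sum_{j\geqslant 1}\sum_{n\geqslant 0} r^{2n}\,|g_{j,n,R}(s)|^2\Big)\,P_{\mu_1}(Re^{is})\,\frac{ds}{2\pi}\ \leqslant\ D_{\boldsymbol{\mu},1}^{(2)}(f,R)\ \leqslant\ D_{\boldsymbol{\mu},1}^{(2)}(f).
\end{equation*}
Since Poisson integration preserves mass, the measure $P_{\mu_1}(Re^{is})\,\tfrac{ds}{2\pi}$ has total mass $\mu_1(\mathbb T)/2\pi>0$, so Chebyshev's inequality produces a point $s_0=s_0(R,r)$ at which $\sum_{j,n}r^{2n}|g_{j,n,R}(s_0)|^2\leqslant 2\pi D_{\boldsymbol{\mu},1}^{(2)}(f)/\mu_1(\mathbb T)$. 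Your telescoping relation $a_{j,n}R^j e^{ijs_0}=g_{j,n,R}(s_0)-g_{j+1,n,R}(s_0)$ then gives $\sum_{j\geqslant 1,n\geqslant 0}|a_{j,n}|^2R^{2j}r^{2n}\leqslant 8\pi D_{\boldsymbol{\mu},1}^{(2)}(f)/\mu_1(\mathbb T)$ uniformly in $R,r$, and letting $R,r\to 1^-$ finishes; the point $s_0$ is allowed to depend on $R$ and $r$ because only the uniform bound survives the limit. Note that even this repaired argument differs from the paper's, which avoids tail sums entirely: there one bounds $P_{\mu_1}(z_1)\geqslant \frac{\mu_1(\mathbb T)}{8\pi}(1-|z_1|^2)$ pointwise and reduces to an explicit Bergman-type monomial computation, obtaining $\sum_{m\geqslant 1,n}|a_{m,n}|^2\leqslant C\, D_{\boldsymbol{\mu},1}^{(2)}(f)$ directly, with no selection of boundary points and no telescoping.
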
    
\begin{proof}
    Let $f(z_1,z_2) = \sum_{m,n=0}^\infty a_{m,n} z_1^{m} z_2^{n} \in \mathcal{D}^{2}(\boldsymbol{\mu}).$ Using monotone convergence theorem, we get 
\begin{align*}
    D_{\boldsymbol{\mu},1}^{(2)}(f)
     \geqslant \lim_{R \to 1^-}\lim_{r \to 1^-}  \frac{1}{2\pi} \displaystyle\iint_{R\mathbb{D} \times \mathbb{T}} \left|\partial_1 f(z_1, re^{it})\right|^{2}   P_{\mu_1}(z_1) dt dA(z_1).
\end{align*}
    Since $P_{\mu_1}(z_1) \geqslant \frac{\mu_1(\mathbb T)}{8\pi} (1-|z_1|^2)$ (see \cite[pg 236]{Rudin-R and C}), we get that,
\begin{align*}
    D_{\boldsymbol{\mu},1}^{(2)}(f) 
     &\geqslant \lim_{r \to 1^-} \lim_{R \to 1^-} \frac{1}{2\pi} \iint_{R\mathbb{D} \times \mathbb{T}}   \bigg|\sum_{m=1,n=0}^\infty m a_{m,n} z_1^{m-1} (re^{it})^{n}\bigg|^2 ~ \frac{\mu_1(\mathbb T)}{8\pi} (1-|z_1|^2) dt~ dA(z_1)\\
     &=  \lim_{r \to 1^-} \lim_{R \to 1^-} \frac{\mu_1(\mathbb T)}{4 \pi}\sum_{m = 1}^\infty \sum_{n=0}^\infty m^2 |a_{m,n}|^2 r^{2n} R^{2m}  \left \{\frac{1}{2m} -\frac{R^2}{2m+2}\right\} \\ 
     &=  \lim_{r \to 1^-} \frac{\mu_1(\mathbb T)}{8\pi}  \sum_{m = 1}^\infty \sum_{n=0}^\infty |a_{m,n}|^2 \frac{m}{m+1} r^{2n}.
\end{align*}
Since $\frac{m}{m+1} \geqslant \frac{1}{2},$ it follows that 
\begin{align}\label{lower bound for first integral}
    D_{\boldsymbol{\mu},1}^{(2)}(f) \geqslant  \lim_{r \to 1^-} \frac{\mu_1(\mathbb T)}{16 \pi} \sum_{m = 1}^\infty \sum_{n=0}^\infty |a_{m,n}|^2  r^{2n}.
\end{align}
    Similarly we also get that,  
\begin{equation}\label{lower bound for second integral}
     D_{\boldsymbol{\mu},2}^{(2)}(f) \geqslant \lim_{r \to 1^-} \frac{\mu_2(\mathbb T)}{16 \pi}  \sum_{m = 0}^\infty \sum_{n=1}^\infty |a_{m,n}|^2  r^{2m}.
\end{equation}
    Since $f \in \mathcal{D}^{2}(\boldsymbol{\mu})$, the left hand sides and consequently the right hand sides of \eqref{lower bound for first integral} and \eqref{lower bound for second integral} are finite.
    Now, it follows  from the Dominated Convergence Theorem that $f$ belongs to $H^2(\mathbb D^2).$
\end{proof}
    In the following theorem, we note that it is, in fact, a reproducing kernel Hilbert space.
\begin{theorem}\label{Dirichlet spaces are RKHS}
For any $\boldsymbol{\mu}\in \mathcal{PM}_{+}(\mathbb T^2),$ the Dirichlet-type space
    $(\mathcal{D}^{2}(\boldsymbol{\mu}), \|\cdot\|_{\boldsymbol{\mu}})$ is a reproducing kernel Hilbert space.
\end{theorem}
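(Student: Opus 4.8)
The plan is to show that the inner product space $(\mathcal{D}^{2}(\boldsymbol{\mu}), \|\cdot\|_{\boldsymbol{\mu}})$ is complete and that point evaluations are bounded. First I would establish boundedness of the evaluation functionals $f \mapsto f(w)$ for each fixed $w = (w_1,w_2) \in \mathbb{D}^2$. By Lemma \ref{Dirichlet contained in Hardy}, we have the inclusion $\mathcal{D}^{2}(\boldsymbol{\mu}) \subseteq H^2(\mathbb D^2)$, and the estimates \eqref{lower bound for first integral} and \eqref{lower bound for second integral} show that $\|f\|_{H^2(\mathbb D^2)}^2 \leqslant C \, \|f\|_{\boldsymbol{\mu}}^2$ for a constant $C$ depending only on $\boldsymbol{\mu}$; since $\|f\|_{H^2(\mathbb D^2)}^2$ is one of the summands defining $\|f\|_{\boldsymbol{\mu}}^2$, this inclusion is in fact a bounded embedding. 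As the Hardy space $H^2(\mathbb D^2)$ is itself a reproducing kernel Hilbert space, its point evaluations are bounded, and composing with the bounded embedding yields boundedness of point evaluations on $\mathcal{D}^{2}(\boldsymbol{\mu})$.

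Next I would prove completeness, which is the crux of the argument. Let $(f_k)$ be a Cauchy sequence in $\|\cdot\|_{\boldsymbol{\mu}}$. Since $\|\cdot\|_{H^2(\mathbb D^2)} \leqslant \|\cdot\|_{\boldsymbol{\mu}}$, the sequence is Cauchy in $H^2(\mathbb D^2)$ and converges there to some $f \in H^2(\mathbb D^2)$; in particular $f_k \to f$ uniformly on compact subsets of $\mathbb D^2$, so $f \in \mathcal{O}(\mathbb D^2)$ and the partial derivatives $\partial_1 f_k, \partial_2 f_k, \partial_1\partial_2 f_k$ converge to $\partial_1 f, \partial_2 f, \partial_1\partial_2 f$ uniformly on compacta. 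The remaining task is to show that $f \in \mathcal{D}^{2}(\boldsymbol{\mu})$ and that $\|f_k - f\|_{\boldsymbol{\mu}} \to 0$. For the three Dirichlet seminorm pieces I would pass to the limit using a Fatou-type argument: for each of the truncated quantities $D_{\boldsymbol{\mu},i}^{(2)}(\cdot, R)$ (and $D_{\boldsymbol{\mu},3}^{(2)}(\cdot, R_1, R_2)$) introduced at the start of Section 3, the integrand of $f_k - f_\ell$ converges uniformly on the compact region of integration, so one obtains control of $D_{\boldsymbol{\mu},i}^{(2)}(f_k - f, R)$ by $\liminf_\ell D_{\boldsymbol{\mu},i}^{(2)}(f_k - f_\ell, R) \leqslant \liminf_\ell \|f_k - f_\ell\|_{\boldsymbol{\mu}}^2$, uniformly in $R$; letting $R \to 1^-$ via monotone convergence gives $D_{\boldsymbol{\mu},i}^{(2)}(f_k - f) \leqslant \varepsilon$ for large $k$. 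Summing the three pieces together with the $H^2$ part shows both $f \in \mathcal{D}^{2}(\boldsymbol{\mu})$ (take one fixed large $k$ and use the triangle inequality) and $\|f_k - f\|_{\boldsymbol{\mu}} \to 0$.

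The main obstacle I anticipate lies in the delicate handling of the limits $r \to 1^-$ and $R \to 1^-$ that appear in the definitions \eqref{seminorm-first integral}–\eqref{seminorm-second integral} of the seminorm pieces. Unlike the genuinely two-dimensional integral $D_{\boldsymbol{\mu},3}^{(2)}$, the quantities $D_{\boldsymbol{\mu},1}^{(2)}$ and $D_{\boldsymbol{\mu},2}^{(2)}$ involve a boundary limit in one variable inside the integral, so one must justify interchanging the Fatou/monotone-convergence argument in $R$ with the pre-existing limit in $r$. I would address this by working first with the fully truncated quantities $D_{\boldsymbol{\mu},1}^{(2)}(\cdot, R)$ on $R\mathbb{D} \times \mathbb{T}$, where for fixed $r$ the integration region is compact in the first variable and uniform convergence of $\partial_1 f_k$ applies cleanly, and only afterward taking the iterated limits. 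Once completeness and bounded point evaluations are both in hand, the space is by definition a reproducing kernel Hilbert space, completing the proof.
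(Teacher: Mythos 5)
Your proposal is correct and follows essentially the same route as the paper: the contractive inclusion $\mathcal{D}^{2}(\boldsymbol{\mu}) \subseteq H^2(\mathbb{D}^2)$ from Lemma \ref{Dirichlet contained in Hardy} yields bounded point evaluations, and completeness is proved by taking the $H^2(\mathbb{D}^2)$-limit of the Cauchy sequence, using uniform convergence of $f_k$ and its partial derivatives on compacta, and then passing to the limit in the truncated Dirichlet integrals before letting $R \to 1^-$. If anything, your Fatou-type bound $D_{\boldsymbol{\mu},i}^{(2)}(f_k-f,R) \leqslant \liminf_\ell \|f_k-f_\ell\|_{\boldsymbol{\mu}}^2$, combined with the observation that the truncated integrals are monotone in both truncation parameters, is a more careful rendering of the step the paper disposes of by a rather loose appeal to the Monotone Convergence Theorem.
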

\begin{proof}
    From Lemma \ref{Dirichlet contained in Hardy}, we have that $\mathcal{D}^{2}(\boldsymbol{\mu}) \subseteq H^2(\mathbb D^2).$ This implies that $(\mathcal{D}^{2}(\boldsymbol{\mu}), \|\cdot\|_{\boldsymbol{\mu}})$ is contained contractively in $H^2(\mathbb D^2).$ Hence $\mathcal{D}^{2}(\boldsymbol{\mu})$ is a reproducing kernel Hilbert space provided $\mathcal{D}^{2}(\boldsymbol{\mu})$ is a Hilbert space with respect to $\|\cdot\|_{\boldsymbol{\mu}}.$ 
    To prove the completeness of the space $(\mathcal{D}^{2}(\boldsymbol{\mu}), \|\cdot\|_{\boldsymbol{\mu}})$, 
    let $(f_n)$ be a Cauchy sequence in $\mathcal{D}^{2}(\boldsymbol{\mu}).$ This in turn implies that $(f_n)$ is a Cauchy sequence in $H^2(\mathbb D^2).$ Since $H^2(\mathbb D^2)$ is a Hilbert space, there exists an $f \in H^2(\mathbb D^2)$ such that $f_n \rightarrow f$ as $n \rightarrow \infty$ in $H^2(\mathbb D^2)$. Fix an $\epsilon > 0.$ There exists an $N_1 \in \mathbb N$ such that 
    $\|f_n -f\|_{H^2(\mathbb D^2)} < \frac{\epsilon}{2}$  for all $n \geqslant N_1.$  Note that 
\begin{align*}
     & D^{(2)}_{\boldsymbol{\mu}}(f_n -  f) = D_{\boldsymbol{\mu}, 1}^{(2)}(f_n-f)+ D_{\boldsymbol{\mu}, 2}^{(2)}(f_n-f)+ D_{\boldsymbol{\mu}, 3}^{(2)}(f_n-f) \\  
     &= \lim_{r \to 1^-} \lim_{r_1 \to 1^-} \iint_{ r_1\mathbb D \times \mathbb T}  \left|{\partial_1{(f_n -f)}}(z_1, re^{it})\right|^{2} P_{\mu_1}(z_1)~dt~ dA(z_1) + \\ 
     & \lim_{r \to 1^-} \lim_{r_1 \to 1^-} \iint_{\mathbb T \times r_1 \mathbb D}  \left|{\partial_2 {(f_n -f)}} (re^{it}, z_2)\right|^{2} P_{\mu_2}(z_2)~ dA(z_1)~dt + \\
     &\lim_{r_1 \to 1^-}\iint_{ (r_1\mathbb D)^2} \left | {\partial_1 \partial_2 (f_n-f)}(\boldsymbol{z}) \right |^2 \boldsymbol{P}_{\boldsymbol{\mu}}(\boldsymbol{z}) d\boldsymbol{A}(\boldsymbol{z}).
\end{align*}
    Since, for each $n\in\mathbb N,$ $f_n$ is an analytic map, therefore $f_n, {\partial_1 f_n},$ ${\partial_2 f_n},$ and ${\partial_1 \partial_2 f_n}$ will converge uniformly on any compact subset of bidisc $\mathbb D^2$ respectively to $f, {\partial_1 f},$ ${\partial_2 f},$ and ${\partial_1 \partial_2 f}$. 
    For each $n\in\mathbb N,$ let $g_n(z_1, re^{it}) := \partial_1(f_n-f)(z_1,re^{it}).$ Then we have, 
\begin{align*}
      \iint_{r_1 \mathbb D \times \mathbb T} \displaystyle\lim_{n \to \infty} \left |g_n(z_1,re^{it}) \right|^2 P_{\mu_1}(z_1)~dt~ dA(z_1) =0. 
\end{align*}
     Therefore, by the Monotone Convergence theorem, there exists $N_2\in\mathbb N$ such that 
\begin{align*}
     \lim_{r \to 1^-}  \lim_{r_1 \to 1^-} \iint_{r_1 \mathbb D \times \mathbb T} \left |g_n(z_1,re^{it}) \right|^2 P_{\mu_1}(z_1)~dt~ dA(z_1) < \frac{\epsilon}{6}, \quad n \geqslant N_2.  
\end{align*}
Therefore, we get 
    $$\lim_{r \to 1^-} \iint_{\mathbb D \times \mathbb T}  \left|{\partial_1{(f_n -f)}} (z_1, re^{it})\right|^2 P_{\mu_1}(z_1)~dt~ dA(z_1) < \frac{\epsilon}{6}, \quad n \geqslant N_2.$$
    Similarly, we get that for some $N_3, N_4\in\mathbb N,$
\begin{align*}
       \lim_{r \to 1^-} \iint_{\mathbb T \times \mathbb D}  \left|{\partial_2{(f_n -f)}}(re^{it}, z_2)\right|^{2} P_{\mu_2}(z_2)~ dA(z_1) dt~< \frac{\epsilon}{6}, \quad n\geqslant N_3,
\end{align*} 
and 
\begin{align*}
   \lim_{r \rightarrow 1^-}\iint_{ (r_1\mathbb D)^2}  \left | {\partial_1\partial_2 (f_n-f)}(z_1,z_2) \right |^2 \boldsymbol{P}_{\boldsymbol{\mu}}(\boldsymbol{z}) d\boldsymbol{A}(\boldsymbol{z})  < \frac{\epsilon}{6}, \quad n\geqslant N_4.
\end{align*}
    By choosing $N=\max\{N_1,N_2,N_3,N_4\},$ we observe that $D^{(2)}_{\boldsymbol{\mu}}(f_n -  f) < \frac{\epsilon}{2},$ for all $n \geqslant N.$ 
   Since $(f_n)$ is Cauchy in $\mathcal{D}^{2}(\boldsymbol{\mu}),$ it is a bounded sequence. That is, there exists a $K > 0$ such  that, 
     $D^{(2)}_{\boldsymbol{\mu}}(f{_n}) < K$ for all $n\in\mathbb N.$ 
     Using the inequality $\|f\|_{\boldsymbol{\mu}} \leqslant \|(f-f_N)\|_{\boldsymbol{\mu}}+\|f_N\|_{\boldsymbol{\mu}},$ we conclude that $f\in\mathcal D^2(\boldsymbol \mu).$
     Since, for all $n\geqslant N,$ we have $D^{(2)}_{\boldsymbol{\mu}}(f_n -  f) < \frac{\epsilon}{2}$ and $\|f_n-f\|_{H^2(\mathbb D^2)}<\frac{\epsilon}{2},$ it follows that 
     Using this  and the fact that $f_n \to f$ in $\mathcal{D}^{2}(\boldsymbol{\mu})$ it follows that, $(f_n)$ converges to $f$ in  $\mathcal{D}^{2}(\boldsymbol{\mu}).$ This completes the proof of the theorem.  
\end{proof}
    When $\boldsymbol{\mu}$ is the Lebesgue measure on $\mathbb T^2$, then we denote $\mathcal{D}^{2}(\boldsymbol{\mu})$ by $\mathcal{D}^{2}$ and call it classical Dirichlet space over bidisc. We note that  
\begin{equation*}
    \mathcal{D}^{2} = \bigg\{   
    f(z_1,z_2)=\sum_{m, n =0}^{\infty}  a_{m,n}z_1^{m}z_2^{n} :  \sum_{m, n =0}^{\infty} (m+1)(n+1) |a_{m,n}|^{2} < \infty\bigg\},  
\end{equation*} 
    and consequently norm of any element $f\in\mathcal D^{(2)}$ with power series representation $\sum_{m, n =0}^{\infty}  a_{m,n}z_1^{m}z_2^{n}$, is given by 
    \[\|f\|_{\mathcal{D}^{2}}^2=\sum_{m, n =0}^{\infty} (m+1)(n+1) |a_{m,n}|^{2}.\]
    The associated  kernel function $K_{\mathcal{D}^{2}}$ at $(z_1,z_2),(w_{1},w_{2}) \in \mathbb{D}^{2}$ therefore turns out to be
\begin{equation*}
    K_{\mathcal{D}^{2}}\big((z_1,z_2),(w_1,w_2)\big)= \sum_{m,n=0}^\infty \frac{z_1^{m}\overline{w}_{1}^{m}~~ z_2^{n}\overline{w}_{2}^{n}}{(m+1)(n+1)}=K_{\mathcal D}(z_1,w_1) K_{\mathcal D}(z_2,w_2), 
\end{equation*}  
    where $K_{\mathcal D}$ is the reproducing kernel for the classical Dirichlet space $\mathcal D$.
    For any $f\in\mathcal O(\mathbb D^2)$ and fixed $(w_1,w_2)\in\mathbb D^2,$ we define $f_{w_1}(z):=f(w_1,z)$ and $f_{w_2}(z):=f(z,w_2)$ for any $z\in\mathbb D.$ We may refer $f_{w_1}$ and $f_{w_2}$ as $w_1$-slice and $w_2$-slice of $f$ respectively. The next proposition tells us that for any $f\in\mathcal D^2,$ its $w_1$-slice as well as $w_2$-slice belong to  Dirichlet space $\mathcal D.$
\begin{prop}\label{slice theorem for classical Dirichlet space}
     Suppose $(w_1,w_2)\in\mathbb D^2$ and $f$ is a function in $\mathcal{D}^{2}.$ Then $f_{w_1}$ and $f_{w_2}$ are in the classical Dirichlet space $\mathcal{D}.$ 
\end{prop}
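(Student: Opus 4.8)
The plan is to work directly with the diagonal description of the norm on $\mathcal{D}^{2}$ established above, namely $\|f\|_{\mathcal{D}^{2}}^2 = \sum_{m,n\geqslant 0}(m+1)(n+1)|a_{m,n}|^2$ for $f(z_1,z_2)=\sum_{m,n} a_{m,n} z_1^m z_2^n$, and to estimate the $\mathcal{D}$-norm of a slice by a single application of the Cauchy--Schwarz inequality.

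First I would fix $w_1 \in \mathbb{D}$ and write down the Taylor expansion of the slice $f_{w_1}(z) = f(w_1,z)$. Since $f$ is holomorphic on $\mathbb{D}^2$, the slice is holomorphic on $\mathbb{D}$ with $f_{w_1}(z) = \sum_{n\geqslant 0} c_n z^n$, where $c_n = \sum_{m\geqslant 0} a_{m,n} w_1^m$; the inner series converges absolutely because $\sum_m (m+1)|a_{m,n}|^2 < \infty$ for each fixed $n$ (it is one slice of the convergent double sum) and $\sum_m |w_1|^{2m}/(m+1) < \infty$.

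Next I would bound $|c_n|^2$. Splitting the summand as $a_{m,n} w_1^m = \big(\sqrt{m+1}\, a_{m,n}\big)\big(w_1^m/\sqrt{m+1}\big)$ and applying Cauchy--Schwarz gives
\[
|c_n|^2 \leqslant \Big(\sum_{m\geqslant 0}(m+1)|a_{m,n}|^2\Big)\Big(\sum_{m\geqslant 0}\frac{|w_1|^{2m}}{m+1}\Big).
\]
The second factor is a finite constant $C_{w_1}$ depending only on $w_1$ (it is $\leqslant (1-|w_1|^2)^{-1}$, say). Multiplying by $(n+1)$ and summing over $n$ then collapses the right-hand side into $\|f\|_{\mathcal{D}^{2}}^2$:
\[
\|f_{w_1}\|_{\mathcal{D}}^2 = \sum_{n\geqslant 0}(n+1)|c_n|^2 \leqslant C_{w_1}\sum_{m,n\geqslant 0}(m+1)(n+1)|a_{m,n}|^2 = C_{w_1}\,\|f\|_{\mathcal{D}^{2}}^2 < \infty,
\]
so $f_{w_1} \in \mathcal{D}$. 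The argument for $f_{w_2}(z)=f(z,w_2)$ is identical after interchanging the roles of the two variables, which the symmetric form of the norm makes transparent.

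There is no serious obstacle here; the statement is essentially a manifestation of the fact that $\mathcal{D}^{2}$ is the Hilbert tensor product $\mathcal{D}\otimes\mathcal{D}$, so that slicing corresponds to pairing against a reproducing-type vector in one factor. The only point demanding a little care is justifying the identification of the slice's Taylor coefficients $c_n$ (equivalently, the interchange of summation implicit in substituting $z_1=w_1$), which is handled by the absolute convergence noted above; and choosing the Cauchy--Schwarz split so that the weight $(m+1)$ lands on the $a_{m,n}$ factor, matching the $\mathcal{D}$-weight exactly.
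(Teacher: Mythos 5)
Your proposal is correct and follows essentially the same route as the paper: expand the slice's Taylor coefficients as a series in the frozen variable, apply Cauchy--Schwarz to bound them, and sum against the Dirichlet weights to dominate everything by $\|f\|_{\mathcal{D}^2}^2$. The only cosmetic difference is where the weight enters: you place $\sqrt{m+1}$ inside the Cauchy--Schwarz split so the full weighted norm appears at once, whereas the paper applies unweighted Cauchy--Schwarz (yielding the factor $(1-|w_2|^2)^{-1}$) and then invokes $n+1\geqslant 1$ to pass to the full norm.
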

\begin{proof}
    Let  $f(z_1,z_2)$ be represented by the power series $\sum_{m,n=0}^\infty a_{m,n} z_1^{m}z_2^{n}$. Fix $(w_1,w_2) \in \mathbb D^2.$ For each $m \in \mathbb Z_{+}$, define $b_{m} = \sum_{n=0}^\infty a_{m,n} w_2^{n}.$ In these notations, we have
\begin{align*}
    f_{w_2}(z) = \sum_{m=0}^\infty b_{m} z^{m}.
\end{align*}
    Note that
\begin{align*}
    \sum_{m=0}^\infty (m+1) |b_{m}|^{2} 
    \leqslant \frac{1}{1- |w_2|^{2}} \sum_{m,n=0}^\infty (m+1) |a_{m,n}|^{2} 
     \leqslant \frac{1}{1- |w_2|^{2}} \sum_{m,n=0}^\infty (m+1)(n+1) |a_{m,n}|^{2}.
\end{align*}
      This shows that $f_{w_2} \in \mathcal{D}.$
      Similarly, one can show that $f_{w_1} \in \mathcal{D}$. 
    \end{proof} 
    Proposition \ref{slice theorem for classical Dirichlet space} is true for general $\boldsymbol{\mu}\in\mathcal P\mathcal M_{+}(\mathbb T^2)$ when $(w_1,w_2)=0.$ We shall need this in what follows, e.g. in Lemma \ref{third integral estimate for z_1f}. Moreover, it says that $f(0,z_2)$ and $f(z_1,0),$ when treated as functions on bidisc, belong to Dirichlet-type space $\mathcal{D}^{2}(\boldsymbol{\mu}).$ 
    
    Throughout the paper, we will consistently adhere to the following notations:  for $m,p \in \mathbb Z_{\geqslant 0}$, define 
    $m \wedge p:=\min\{m,p\}$ and $m \vee p:=\max\{m,p\}$. Also, we choose $S$ to denote the open square $(0,2\pi) \times (0,2\pi)$. 
\begin{lemma}\label{semi-norm of slice functions}
    For $f \in \mathcal{D}^{2}(\boldsymbol{\mu})$, the functions $f(0,z_2)$ and $f(z_1,0) \in \mathcal{D}^{2}(\boldsymbol{\mu})$. Moreover, $D_{\boldsymbol{\mu}, j}^{(2)}(f)\geqslant D_{\boldsymbol{\mu}, j}^{(2)}(f(0,z_2))$ and $D_{\boldsymbol{\mu}, j}^{(2)}(f)\geqslant D_{\boldsymbol{\mu}, j}^{(2)}(f(z_1,0)),$ for $j=1,2,3.$
\end{lemma}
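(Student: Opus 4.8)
The plan is to work from the power series expansion $f(\boldsymbol z)=\sum_{m,n\geqslant 0}a_{m,n}z_1^m z_2^n$ and to exploit the fact that, when the slice $f(0,z_2)=\sum_{n\geqslant 0}a_{0,n}z_2^n$ is regarded as a function on $\mathbb D^2$, it is independent of $z_1$; likewise $f(z_1,0)=\sum_{m\geqslant 0}a_{m,0}z_1^m$ is independent of $z_2$. Consequently $\partial_1\big(f(0,z_2)\big)=0$ and $\partial_1\partial_2\big(f(0,z_2)\big)=0$, so that $D_{\boldsymbol\mu,1}^{(2)}\big(f(0,z_2)\big)=D_{\boldsymbol\mu,3}^{(2)}\big(f(0,z_2)\big)=0$, and the inequalities for $j=1$ and $j=3$ are immediate from the nonnegativity of $D_{\boldsymbol\mu,j}^{(2)}(f)$. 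By the symmetric observation for $f(z_1,0)$, only two genuine inequalities remain, namely $D_{\boldsymbol\mu,2}^{(2)}(f)\geqslant D_{\boldsymbol\mu,2}^{(2)}\big(f(0,z_2)\big)$ and $D_{\boldsymbol\mu,1}^{(2)}(f)\geqslant D_{\boldsymbol\mu,1}^{(2)}\big(f(z_1,0)\big)$; these are mirror images of one another, so I would treat only the first.

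For the decisive step I would isolate the $z_1$-frequencies. Writing $c_m(z_2):=\sum_{n\geqslant 1}n\,a_{m,n}z_2^{n-1}$, for each fixed $r<1$ one has $\partial_2 f(re^{it},z_2)=\sum_{m\geqslant 0}r^m e^{imt}c_m(z_2)$, so Parseval's identity on the circle gives
\[
\frac{1}{2\pi}\int_0^{2\pi}\big|\partial_2 f(re^{it},z_2)\big|^2\,dt=\sum_{m\geqslant 0}r^{2m}\,|c_m(z_2)|^2 .
\]
To keep every quantity finite while performing this computation I would first truncate to $R\mathbb D$ and work with $D_{\boldsymbol\mu,2}^{(2)}(f,R)$; on the compact disc $R\overline{\mathbb D}$ the Poisson factor $P_{\mu_2}$ is bounded, so Fubini and term-by-term integration are legitimate, yielding
\[
D_{\boldsymbol\mu,2}^{(2)}(f,R)=\lim_{r\to 1^-}\int_{R\mathbb D}\sum_{m\geqslant 0}r^{2m}|c_m(z_2)|^2\,P_{\mu_2}(z_2)\,dA(z_2).
\]
Discarding every term with $m\geqslant 1$ (each being nonnegative) and noting that the $m=0$ term $\int_{R\mathbb D}|c_0(z_2)|^2 P_{\mu_2}(z_2)\,dA(z_2)$ is precisely $D_{\boldsymbol\mu,2}^{(2)}\big(f(0,z_2),R\big)$ — since $c_0=\partial_2\big(f(0,z_2)\big)$ — gives $D_{\boldsymbol\mu,2}^{(2)}(f,R)\geqslant D_{\boldsymbol\mu,2}^{(2)}\big(f(0,z_2),R\big)$. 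Letting $R\to 1^-$ and invoking monotone convergence along the increasing domains $R\mathbb D$ promotes this to $D_{\boldsymbol\mu,2}^{(2)}(f)\geqslant D_{\boldsymbol\mu,2}^{(2)}\big(f(0,z_2)\big)$.

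With the three inequalities in hand the membership statement follows at once: $D^{(2)}_{\boldsymbol\mu}\big(f(0,z_2)\big)=D_{\boldsymbol\mu,2}^{(2)}\big(f(0,z_2)\big)\leqslant D^{(2)}_{\boldsymbol\mu}(f)<\infty$, and $f(0,z_2)$ is manifestly holomorphic on $\mathbb D^2$, so $f(0,z_2)\in\mathcal D^2(\boldsymbol\mu)$; the argument for $f(z_1,0)$ is identical after interchanging the roles of the two coordinates. The step I expect to require the most care is the interchange of the angular integration (Parseval in $t$), the area integral against the possibly unbounded weight $P_{\mu_2}$, and the radial limit $r\to 1^-$. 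Routing this through the truncated functionals $D_{\boldsymbol\mu,j}^{(2)}(\cdot,R)$ — where the weight is bounded and all the sums converge absolutely — and only afterwards passing $R\to 1^-$ by monotone convergence is what makes the rearrangement rigorous; everything else reduces to the positivity of the individual Fourier-coefficient contributions.
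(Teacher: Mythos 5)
Your proof is correct and takes essentially the same route as the paper's: both decompose $D_{\boldsymbol{\mu},2}^{(2)}(f,R)$ into its $z_1$-frequency blocks, identify the $m=0$ block with $D_{\boldsymbol{\mu},2}^{(2)}\big(f(0,z_2),R\big)$, discard the nonnegative higher blocks, and let $R\to 1^-$, with the cases $j=1,3$ dismissed identically because the slice's semi-norms vanish. The only cosmetic difference is that the paper runs the decomposition through its Fourier-coefficient formula \eqref{formula for second integral} and recognizes the remainder as the semi-norm of the tail $\sum_{m\geqslant 1}\sum_{n\geqslant 0}a_{m,n}z_1^m z_2^n$, whereas you make the positivity of the remainder manifest pointwise via Parseval in the angular variable.
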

\begin{proof}
    Consider $f(z_1,z_2) = \sum_{m,n=0}^\infty a_{m,n} z_1^m z_2^n$. If we let $z_1=0$, then the resulting function $f_2=f(0,z_2)$ will be $f(0,z_2) = \sum_{n=0}^\infty a_{0,n} z_2^n$. So it is enough to check $D^{(2)}_{\boldsymbol{\mu},2}(f) < \infty$. The same computations as above show that
\begin{eqnarray*}
    D_{\boldsymbol{\mu}, 2}^{(2)}(f_2,R)
    =\frac{1}{2\pi}  \sum_{n,q = 1}^\infty (n \wedge q) a_{0,n}\overline{a_{0,q}}~ \widehat\mu_2(q - n)~R^{2(n\vee q)}.
\end{eqnarray*} 
Thus from \eqref{formula for second integral}, it can be seen that 
\begin{eqnarray*}
    D_{\boldsymbol{\mu}, 2}^{(2)}(f,R)&=&\lim_{r \rightarrow 1^-}\frac{1}{2\pi} \sum_{m=0}^\infty \sum_{n,q = 1}^\infty (n \wedge q) a_{m,n}\overline{a_{m,q}}~ \widehat\mu_2(q - n)~R^{2(n\vee q)} r^{2m} \\
    &=&  D_{\boldsymbol{\mu}, 2}^{(2)}(f_2,R) +  D_{\boldsymbol{\mu}, 2}^{(2)}\Big(\sum_{m=1}^\infty \sum_{n=0}^\infty a_{m,n} z_1^m z_2^n,R\Big).
\end{eqnarray*}
Taking limit $R$ tending to $1,$ this implies that $D_{\boldsymbol{\mu}, 2}^{(2)}(f)\geqslant D_{\boldsymbol{\mu}, 2}^{(2)}(f_2)$. As $D_{\boldsymbol{\mu}, 1}^{(2)}(f_2)=0=D_{\boldsymbol{\mu}, 3}^{(2)}(f_2),$ the proof of the lemma is completed.
\end{proof}

    Now, we proceed towards proving that the functions $z_1$ and $z_2$ are multipliers of $\mathcal{D}^{2}(\boldsymbol{\mu}).$ To this end, few notations and a number of lemmas are in order.
    
    For any $f\in\mathcal O(\mathbb D^2)$ with power series representation $\sum_{m,n=0}^{\infty}a_{m,n}z_1^{m} z_2^{n},$ define 
    $$(T_{k_1,k_2}f)(z_1,z_2) := \sum_{m=k_1}^\infty \sum_{n=k_2}^\infty a_{m,n} z_1^{m} z_2^{n}, \quad k_1,k_2\in\mathbb Z_{\geqslant 0}.$$
    Following Richter's paper \cite[page 330]{Richter1}, note that for any $R\in (0,1),$ and $t\in [0,2\pi],$    \begin{equation}\label{inner product-monomials}
    \frac{1}{2\pi} \int_0^{2\pi} e^{i(n-m)s} P(R e^{it},e^{is}) ds = R^{|n-m|} e^{i(n-m)t}, \quad m,n\in\mathbb Z_{\geqslant 0}.
    \end{equation}
    In what follows, we shall be using the fact that for any $R\in [0,1],$ $t,s\in [0,2\pi],$ one gets  $P(R e^{it},e^{is})=P(Re^{is},e^{it})$.
    
Let $\nu$ be a finite positive Borel measure  on the unit circle $\mathbb T$ and $n\in\mathbb Z,$ the $n^{\it {th}}$-Fourier coefficient of $\nu$ is defined by, 
\begin{align*}
    \widehat{\nu}(n) := \int_{0}^{2\pi} e^{-int} d\nu(t).
\end{align*}
At many occasions, we shall be dealing with quantities of the form
    \begin{eqnarray}\label{for short hand notation}
        \sum_{m,p=1}^\infty \sum_{n=0}^\infty (m \wedge p) a_{m,n} \overline{a}_{p,n} \widehat\nu(p-m) \mbox{ and }
        \sum_{n,q=1}^\infty \sum_{m=0}^\infty (n \wedge q) a_{m,n} \overline{a}_{m,q} \widehat\nu(q-n).
    \end{eqnarray}
    We shall find it convenient to adopt the following notation to denote the quantities in \eqref{for short hand notation}: For any measure $\nu\in\mathcal M_+(\mathbb T),$ define the matrix $M_{\nu}:=\big(\!\big((j \wedge k)  \widehat\nu(k - j)\big)\!\big).$ This is a matrix of size infinite and we shall denote it's $(j,k)$-th entry by $M_{\nu}(j,k).$ In these notations, the quantities in \eqref{for short hand notation} can be denoted as 
    \begin{eqnarray*}
        \sum_{n=0}^\infty \langle M_{\nu} \boldsymbol{a}_{\cdot n}, \boldsymbol{a}_{\cdot n}\rangle \mbox{ and }
        \sum_{m=0}^\infty \langle M_{\nu} \boldsymbol{a}_{m\cdot}, \boldsymbol{a}_{m\cdot}\rangle
    \end{eqnarray*}
     respectively, where $\boldsymbol{a}_{\cdot n}$ denotes the column $(a_{1n},a_{2n},\ldots)^{\tt T}$ and $\boldsymbol{a}_{m\cdot}$ denotes the column $(a_{m1},a_{m2},\ldots)^{\tt T}$. 
     Note that for any $\nu\in\mathcal M_+(\mathbb T),$ the matrix $M_{\nu}$ is formally positive definite. We shall use this fact in what follows, e.g. in Lemma \ref{R-dilation is contraction}. 
    
    We have the following lemmas which are analogous to the results of Richter in \cite{Richter1}. 
\begin{lemma}\label{first integral and tail}
    Let $0 < R < 1.$ If $f$ is analytic on $\mathbb D^2$, then
\begin{align*}
    D_{\boldsymbol{\mu}, 1}^{(2)}(f,R) = \lim_{r \to 1^-} \sum_{k_1=1}^\infty \frac{1}{(2\pi)^2} \iint_S \left |(T_{k_1,0} f)(Re^{is},re^{it}) \right|^2 P_{\mu_1}(Re^{is}) dsdt.
\end{align*}  
\end{lemma}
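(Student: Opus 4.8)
The plan is to reduce both sides of the claimed identity to one and the same closed-form expression in the Taylor coefficients of $f$, and then let $r\to 1^-$. Write $f(z_1,z_2)=\sum_{m,n\geqslant 0}a_{m,n}z_1^mz_2^n$; since $f$ is holomorphic on $\mathbb D^2$ and $R,r<1$, this series (and the series for $T_{k_1,0}f$) converges absolutely and uniformly on the compact torus $\{|z_1|=R\}\times\{|z_2|=r\}$, which legitimises every term-by-term integration below.

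First I would analyse the right-hand side. Fixing $k_1\geqslant 1$ and $r<1$, I expand $|(T_{k_1,0}f)(Re^{is},re^{it})|^2$ as the absolutely convergent double series $\sum_{m,p\geqslant k_1}\sum_{n,q\geqslant 0}a_{m,n}\overline{a_{p,q}}\,R^{m+p}r^{n+q}e^{i(m-p)s}e^{i(n-q)t}$ and integrate. The orthogonality relation $\frac{1}{2\pi}\int_0^{2\pi}e^{i(n-q)t}\,dt=\delta_{n,q}$ collapses the $z_2$-indices to $n=q$. For the $s$-integral against $P_{\mu_1}(Re^{is})$, I unfold the Poisson integral and use the symmetry $P(Re^{it},e^{is})=P(Re^{is},e^{it})$ together with \eqref{inner product-monomials} and the definition of $\widehat\mu_1$ to obtain
\[
\frac{1}{2\pi}\int_0^{2\pi}e^{i(m-p)s}P_{\mu_1}(Re^{is})\,ds=\frac{1}{2\pi}R^{|m-p|}\,\widehat\mu_1(p-m).
\]
Since $R^{m+p+|m-p|}=R^{2(m\vee p)}$, the integral for a fixed $k_1$ equals $\frac{1}{2\pi}\sum_{m,p\geqslant k_1}\sum_{n\geqslant 0}a_{m,n}\overline{a_{p,n}}\,R^{2(m\vee p)}r^{2n}\,\widehat\mu_1(p-m)$. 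Summing over $k_1$ and noting that a pair $(m,p)$ contributes exactly for $k_1=1,\dots,m\wedge p$ produces the combinatorial factor $m\wedge p$, giving
\[
\sum_{k_1=1}^\infty\frac{1}{(2\pi)^2}\iint_S\left|(T_{k_1,0}f)(Re^{is},re^{it})\right|^2P_{\mu_1}(Re^{is})\,ds\,dt=\frac{1}{2\pi}\sum_{m,p\geqslant 1}\sum_{n\geqslant 0}(m\wedge p)\,a_{m,n}\overline{a_{p,n}}\,\widehat\mu_1(p-m)\,R^{2(m\vee p)}r^{2n}.
\]

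Next I would compute the left-hand side directly. Expanding $\partial_1 f(\rho e^{is},re^{it})$, integrating in $t$ (again forcing $n=q$), in $s$ against the Poisson integral exactly as above, and finally in $\rho$ over $[0,R]$ via $\frac{1}{\pi}\int_0^R\rho^{2(m\vee p)-1}\,d\rho=\frac{R^{2(m\vee p)}}{2\pi(m\vee p)}$, and simplifying the coefficient through $\frac{mp}{m\vee p}=m\wedge p$, I find that the pre-limit integral defining $D_{\boldsymbol{\mu}, 1}^{(2)}(f,R)$ equals precisely the same closed form. Since the two pre-limit expressions coincide for every $r<1$, passing to the limit $r\to 1^-$ yields the asserted identity; alternatively one may simply quote the closed form of $D_{\boldsymbol{\mu}, 1}^{(2)}(f,R)$ analogous to the one recorded for the second integral.

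The step requiring the most care is the justification of the rearrangements. For fixed $k_1,r,R<1$ the inner $(m,p,n)$-series is only conditionally signed through $\widehat\mu_1(p-m)$, so I would invoke Fubini via the uniform convergence on the compact torus, noting $|\widehat\mu_1(p-m)|\leqslant\mu_1(\mathbb T)$ together with the geometric decay of $|a_{m,n}|R^m r^n$ from the Cauchy estimates, to guarantee absolute convergence. The outer sum over $k_1$ can then be interchanged with the integral either by Tonelli, since each integrand $|T_{k_1,0}f|^2P_{\mu_1}\geqslant 0$, or directly inside the absolutely convergent closed-form series; this interchange is exactly where the factor $m\wedge p$ is generated, so it is the computational heart of the lemma.
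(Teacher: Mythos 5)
Your proposal is correct and follows essentially the same route as the paper's proof: both sides are reduced, via term-by-term integration using orthogonality in $t$, the Poisson-kernel identity \eqref{inner product-monomials} with the symmetry $P(Re^{it},e^{is})=P(Re^{is},e^{it})$, and the radial integral producing $R^{2(m\vee p)}/2(m\vee p)$, to the common closed form $\frac{1}{2\pi}\sum_{m,p\geqslant 1}\sum_{n\geqslant 0}(m\wedge p)\,a_{m,n}\overline{a_{p,n}}\,\widehat\mu_1(p-m)\,R^{2(m\vee p)}r^{2n}$, with the factor $m\wedge p$ arising from $mp/(m\vee p)$ on the left and from counting $k_1=1,\dots,m\wedge p$ on the right, exactly as in equations \eqref{LHS-first-R-integral} and \eqref{RHS-first-R-integral} of the paper. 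Your added justification of the rearrangements (Fubini for the signed inner series, Tonelli for the nonnegative sum over $k_1$) is a point the paper leaves implicit, but it does not change the argument.
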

\begin{proof}
    For any $f(z_1,z_2)\in\mathcal O(\mathbb D^2)$ with power series representation $\sum_{m,n=0}^{\infty}a_{m,n}z_1^{m} z_2^{n}$ and $k_1\in\mathbb Z_{\geqslant 0}.$ Since $\int_0^{2\pi} e^{i(m-n)t} dt=\delta_{m,n},$ it follows that  
\begin{align*}
     D_{\boldsymbol{\mu}, 1}^{(2)}(f,R) &=\lim_{r \to 1^-} \frac{1}{2\pi} \iint_{R\mathbb{D} \times \mathbb{T}}  \left|\partial_1{f}(z_1, re^{it})\right|^{2} P_{\mu_1}(z_1) ~dt~ dA(z_1)\\
     &=  \lim_{r \to 1^-} \frac{1}{2\pi} \iint_{R\mathbb{D} \times \mathbb{T}}  \left|\sum_{m=1}^\infty \sum_{n=0}^\infty m a_{m,n} z_1^{m-1} (re^{it})^{n} \right|^2  P_{\mu_1}(z_1) ~dt~ dA(z_1)\\
    &=  \lim_{r \rightarrow 1^-}\frac{1}{2\pi} \frac{1}{\pi} \sum_{m,n,p=0}^\infty  m p a_{m,n}\overline{a_{p,n}} \frac{R^{2(m \vee p)}}{2(m \vee p)}   \widehat\mu_1(p - m)~ 2\pi~ r^{2n}.
\end{align*}
    Therefore we get that 
    \begin{equation}\label{LHS-first-R-integral}
    D_{\boldsymbol{\mu}, 1}^{(2)}(f,R) =\lim_{r \to 1^-} \frac{1}{2\pi} \sum_{m,p=1}^\infty \sum_{n=0}^\infty M_{\mu_1}(m,p) a_{m,n} \overline{a_{p,n}} r^{2n} R^{2(m \vee p)}.
\end{equation}
On the other hand, using \eqref{inner product-monomials}, note that
\begin{eqnarray}\label{RHS-first-R-integral}
    &&\lim_{r \to 1^-} \sum_{k_1=1}^\infty \frac{1}{(2\pi)^2} \iint_S \left |(T_{k_1,0} f)(Re^{is_1},re^{it}) \right|^2 P_{\mu_1}(Re^{is_1}) ds_1 dt\nonumber\\ 
   &=& \lim_{r \to 1^-} \sum_{k_1=1}^\infty \frac{1}{(2\pi)^2} \iint_S \left | \sum_{m=k_1}^\infty \sum_{n=0}^\infty a_{m,n} (Re^{is_1})^{m} (re^{it})^{n} \right |^2 P_{\mu_1}(Re^{is_1}) ds_1 dt \nonumber\\  
   &=& \lim_{r \to 1^-} \sum_{k_1=1}^\infty \frac{1}{(2\pi)^2} \int_{0}^{2 \pi} \sum_{m,p=k_1}^\infty \sum_{n,q=0}^\infty a_{m,n}\overline{a_{p,q}} R^{2(m \vee p)} r^{n+q} e^{i(n-q)t} \widehat\mu_1(p-m) dt \nonumber \\ 
    &=& \lim_{r \to 1^-} \sum_{k_1=1}^\infty \frac{1}{2\pi} \sum_{m,p=k_1}^\infty \sum_{n=0}^\infty a_{m,n}\overline{a_{p,n}} R^{2(m \vee p)} r^{2n} \widehat\mu_1(p-m)\nonumber\\
    &=& \lim_{r \to 1^-} \frac{1}{2\pi} \sum_{m,p=1}^\infty \sum_{n=0}^\infty (m \wedge p) a_{m,n} \overline{a_{p,n}} r^{2n} R^{2(m \vee p)} \widehat\mu_1(p-m).
    \end{eqnarray}
Equations \eqref{LHS-first-R-integral} and \eqref{RHS-first-R-integral} put together completes the proof of lemma.
\end{proof}
    We note down the following formula which is analogous to \eqref{LHS-first-R-integral}. In fact, we also have Lemma \ref{formula for second integral interms of tail of the taylor}. We omit the proof of the formula as well as Lemma \ref{formula for second integral interms of tail of the taylor}, since these are analogous to that of Lemma \ref{first integral and tail}.
\begin{eqnarray}\label{formula for second integral}
    D_{\boldsymbol{\mu}, 2}^{(2)}(f,R)=\lim_{r \rightarrow 1^-}\frac{1}{2\pi} \sum_{m=0}^\infty \sum_{n,q = 1}^\infty M_{\mu_2}(n,q) a_{m,n}\overline{a_{m,q}}~R^{2(n\vee q)} r^{2m}.
\end{eqnarray}
\begin{lemma}\label{formula for second integral interms of tail of the taylor}
    Let $0 < R < 1.$ If $f$ is analytic on $\mathbb D^2$, then
    \begin{align*}
     D_{\boldsymbol{\mu}, 2}^{(2)}(f,R)  = \lim_{r \to 1^-} \sum_{k_2=1}^\infty \frac{1}{(2\pi)^2} \iint_S \left |(T_{0,k_2} f)(re^{it}, Re^{is}) \right|^2 P_{\mu_2}(Re^{is}) ds dt.
\end{align*} 
\end{lemma}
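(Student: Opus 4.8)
The plan is to prove the identity by evaluating both sides as explicit power-series sums and matching them, following verbatim the argument of Lemma \ref{first integral and tail} with the two variables interchanged. The left-hand side requires no further work: equation \eqref{formula for second integral} already gives
\[
D_{\boldsymbol{\mu}, 2}^{(2)}(f,R)=\lim_{r \rightarrow 1^-}\frac{1}{2\pi} \sum_{m=0}^\infty \sum_{n,q = 1}^\infty (n \wedge q)\,\widehat\mu_2(q-n)\, a_{m,n}\overline{a_{m,q}}\,R^{2(n\vee q)} r^{2m},
\]
where I have written out $M_{\mu_2}(n,q)=(n\wedge q)\widehat\mu_2(q-n)$. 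It therefore suffices to show that the tail-sum expression on the right reduces to the same quantity.

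For the right-hand side I would substitute the Taylor expansion $(T_{0,k_2}f)(re^{it},Re^{is})=\sum_{m=0}^\infty\sum_{n=k_2}^\infty a_{m,n}(re^{it})^{m}(Re^{is})^{n}$, expand the modulus square into a double series indexed by $(m,n)$ and $(p,q)$, and integrate term by term over $S$ against $P_{\mu_2}(Re^{is})$. The $t$-integration, via $\frac{1}{2\pi}\int_0^{2\pi}e^{i(m-p)t}\,dt=\delta_{m,p}$, forces $m=p$ and contributes the factor $r^{2m}$. For the $s$-integration I would pull the defining $\mu_2$-integral out of $P_{\mu_2}$ by Fubini and invoke \eqref{inner product-monomials} together with the symmetry $P(Re^{is},e^{iu})=P(Re^{iu},e^{is})$ to obtain
\[
\int_0^{2\pi} e^{i(n-q)s}\,P_{\mu_2}(Re^{is})\,ds = R^{|n-q|}\,\widehat\mu_2(q-n).
\]
Combining this with the remaining factor $R^{n+q}$ and using $R^{n+q+|n-q|}=R^{2(n\vee q)}$ shows that the $k_2$-th summand equals $\frac{1}{2\pi}\sum_{m}\sum_{n,q\geqslant k_2}a_{m,n}\overline{a_{m,q}}\,r^{2m}R^{2(n\vee q)}\widehat\mu_2(q-n)$.

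Finally I would interchange the outer sum over $k_2$ with the remaining summations and use the elementary count $\#\{k_2\geqslant 1: k_2\leqslant n\wedge q\}=n\wedge q$ for $n,q\geqslant 1$ to restore the weight $(n\wedge q)$; this reproduces \eqref{formula for second integral} exactly and closes the proof. As in Lemma \ref{first integral and tail}, the only points needing care are the interchanges of the limit in $r$, the integrations, and the infinite summations. These are all justified by the uniform convergence of $f$ and its derivatives on compact subsets of $\mathbb D^2$ (so that on $R\overline{\mathbb D}\times r\overline{\mathbb D}$ everything is an absolutely convergent series), and I anticipate no difficulty beyond this routine bookkeeping, since every step mirrors the first-variable case already carried out in detail.
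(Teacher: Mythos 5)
Your proof is correct and is exactly the argument the paper intends: the paper omits the proof of this lemma, stating it is analogous to Lemma \ref{first integral and tail}, and your computation (term-by-term integration, the $\delta_{m,p}$ from the $t$-integral, the identity $\int_0^{2\pi}e^{i(n-q)s}P_{\mu_2}(Re^{is})\,ds=R^{|n-q|}\widehat\mu_2(q-n)$ via \eqref{inner product-monomials}, and the count $\#\{k_2\geqslant1:k_2\leqslant n\wedge q\}=n\wedge q$) is precisely that analogue carried out in the second variable, matched against \eqref{formula for second integral}.
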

    The following lemma gives a relationship between $D_{\boldsymbol{\mu}, 1}^{(2)}(z_1f,R)$ and $D_{\boldsymbol{\mu}, 1}^{(2)}(f,R).$ In particular, it proves that, for any $0<R<1,$ $D_{\boldsymbol{\mu}, 1}^{(2)}(z_1f,R)\geqslant R^2D_{\boldsymbol{\mu}, 1}^{(2)}(f,R).$ 
\begin{lemma}\label{first integral- difference between z_1f and f}
    Let $0 < R <1$. If $f$ is analytic on $\mathbb D^2,$ then  
\begin{align}\label{first-difference-formula-with-R}
    D_{\boldsymbol{\mu}, 1}^{(2)}(z_1f,R)-R^2 D_{\boldsymbol{\mu}, 1}^{(2)}(f,R) = 
      \lim_{r \to 1^-} \frac{R^2}{(2\pi)^2} \iint_S \left| f(Re^{is}, re^{it}) \right |^2 P_{\mu_1}(Re^{is}) ds dt.
\end{align}    
\end{lemma}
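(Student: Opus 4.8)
The plan is to reduce the computation to the tail-series formula already established in Lemma \ref{first integral and tail} and to exploit the elementary way in which multiplication by the coordinate function $z_1$ acts on the tail functions $T_{k_1,0}$. Write $f(z_1,z_2)=\sum_{m,n\geqslant 0}a_{m,n}z_1^m z_2^n$; then the Taylor coefficients of $z_1 f$ are merely shifted in the first index, and a direct comparison of power series yields the key identity
\[
T_{k_1,0}(z_1 f)=z_1\,T_{k_1-1,0}(f),\qquad k_1\geqslant 1.
\]
This single observation is the heart of the argument; everything else is bookkeeping with the formula of Lemma \ref{first integral and tail}.

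First I would apply Lemma \ref{first integral and tail} to the function $z_1 f$, obtaining
\[
D_{\boldsymbol{\mu},1}^{(2)}(z_1 f,R)=\lim_{r\to 1^-}\sum_{k_1=1}^\infty \frac{1}{(2\pi)^2}\iint_S \big|(T_{k_1,0}(z_1 f))(Re^{is},re^{it})\big|^2 P_{\mu_1}(Re^{is})\,ds\,dt.
\]
Substituting the identity above and using that $|z_1|^2=R^2$ on the circle $z_1=Re^{is}$, each summand acquires a factor $R^2$, so
\[
D_{\boldsymbol{\mu},1}^{(2)}(z_1 f,R)=\lim_{r\to 1^-}\sum_{k_1=1}^\infty \frac{R^2}{(2\pi)^2}\iint_S \big|(T_{k_1-1,0}f)(Re^{is},re^{it})\big|^2 P_{\mu_1}(Re^{is})\,ds\,dt.
\]

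Next I would reindex by $j=k_1-1$ so that the sum runs over $j\geqslant 0$, and isolate the $j=0$ term. Since $T_{0,0}f=f$, that term is exactly the right-hand side of the asserted identity, while the remaining sum over $j\geqslant 1$ equals $R^2 D_{\boldsymbol{\mu},1}^{(2)}(f,R)$, again by Lemma \ref{first integral and tail}. Transposing $R^2 D_{\boldsymbol{\mu},1}^{(2)}(f,R)$ to the left then gives \eqref{first-difference-formula-with-R}.

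The only point demanding care is the interplay of the limit $\lim_{r\to 1^-}$ with the reindexing and the splitting-off of a single term. Here I would note that, after integrating in $t$, the coefficient of each power $r^{2n}$ is of the form $\int_0^{2\pi}|g(s)|^2 P_{\mu_1}(Re^{is})\,ds$ with $g(s)=\sum_{m\geqslant k_1}a_{m,n}R^m e^{ims}$, hence nonnegative; consequently each summand is nonnegative and nondecreasing in $r$, and the rearrangements are legitimate (with the equality read in $[0,\infty]$ if need be). This is the main, though minor, obstacle, and it is handled exactly as in Richter's one-variable treatment \cite{Richter1}.
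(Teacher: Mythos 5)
Your proof is correct, but it takes a genuinely different route from the paper's. The paper works entirely on the coefficient side: it applies the coefficient formula \eqref{LHS-first-R-integral} to $z_1f$ (whose Taylor coefficients are those of $f$ shifted in the first index, producing the entries $M_{\mu_1}(m+1,p+1)$), computes the right-hand side of \eqref{first-difference-formula-with-R} separately as a coefficient sum via \eqref{inner product-monomials}, and concludes from the arithmetic identity $(m+1)\wedge(p+1)=(m\wedge p)+1$, i.e.\ $M_{\mu_1}(m+1,p+1)=M_{\mu_1}(m,p)+\widehat{\mu}_1(p-m)$. You instead invoke the tail-sum statement of Lemma \ref{first integral and tail} together with the structural identity $T_{k_1,0}(z_1f)=z_1\,T_{k_1-1,0}(f)$, so that after pulling out the factor $R^2$ and reindexing, the right-hand side of \eqref{first-difference-formula-with-R} appears as the $j=0$ term of the tail series (since $T_{0,0}f=f$) and the terms with $j\geqslant 1$ reassemble into $R^2D_{\boldsymbol{\mu},1}^{(2)}(f,R)$ by a second application of the same lemma. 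Your organization never requires expanding the boundary integral in Fourier coefficients, and it makes transparent where the extra term comes from --- it is exactly the ``lost'' zeroth tail; this is closer in spirit to Richter's one-variable argument. The price is the interchange of the limit $r\to 1^-$ with the splitting of the series, which you correctly justify: after integrating in $t$, each summand is a power series in $r$ with nonnegative coefficients, hence nonnegative and nondecreasing in $r$, so all limits exist in $[0,\infty]$ and monotone convergence legitimizes the rearrangement. (The paper's coefficient-level manipulations quietly require the same kind of justification, so this is not an extra burden specific to your approach.)
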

\begin{proof}
   Applying \eqref{LHS-first-R-integral} to $z_1f,$ we get 
   \begin{eqnarray}\label{first-integral-z_1f}
   D_{\boldsymbol{\mu}, 1}^{(2)}(z_1f,R)= R^2 ~\lim_{r \rightarrow 1^-}\frac{1}{2\pi} \sum_{m,n,p=0}^\infty M_{\mu_1}(m+1,p+1) a_{m,n}\overline {a_{p,n}} ~  R^{2(m\vee p)}~r^{2n}.
   \end{eqnarray}
On the other hand, using \eqref{inner product-monomials}, we observe that
\begin{eqnarray}\label{difference-z_1f_f_first-integral}
R.H.S. \mbox{ of } \eqref{first-difference-formula-with-R}= \lim_{r \to 1^-} \frac{R^2}{2\pi} \sum_{m,n,p=0}^\infty  a_{m,n} \overline{a_{p,n}} R^{2(m\vee p)} r^{2n}\widehat\mu_1(p-m).
\end{eqnarray}
  Equations \eqref{LHS-first-R-integral}, \eqref{first-integral-z_1f}, and \eqref{difference-z_1f_f_first-integral} put together completes the proof of the lemma.
\end{proof}
{For any $f\in \mathcal{D}^2(\boldsymbol\mu),$ taking limit $R$ tends to $1$ in \eqref{first-difference-formula-with-R}, one gets 
\begin{align}\label{first-difference-formula}
    D_{\boldsymbol{\mu}, 1}^{(2)}(z_1f)- D_{\boldsymbol{\mu}, 1}^{(2)}(f) = 
      \lim_{r \to 1^-} \frac{1}{(2\pi)^2} \iint_S \left| \tilde f(e^{is}, re^{it}) \right |^2 P_{\mu_1}(e^{is}) ds dt,
\end{align} 
where $\tilde f$ denotes the boundary value function corresponding to $f.$} 

\begin{lemma}\label{intermidate estimate for first integral-difference z_1f and f}
    There exists a constant $C = C(\boldsymbol\mu)$ such that for every $f \in \mathcal{D}^2(\boldsymbol\mu)$ and $0 < R < 1,$
\begin{align*}
    \lim_{r \to 1^-} \frac{1}{(2\pi)^2} \iint_S \left|f(Re^{is}, re^{it}) \right|^2 P_{\mu_1}(Re^{is}) ds dt \leqslant C \|f\|^2_{\boldsymbol\mu}
\end{align*}
\end{lemma}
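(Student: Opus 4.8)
The plan is to reduce the desired bound to the corresponding one-variable estimate of Richter, applied to the coefficient slices of $f$ in the $z_1$-variable. Write $f(z_1,z_2)=\sum_{m,n=0}^\infty a_{m,n}z_1^m z_2^n$. First I would evaluate the left-hand side explicitly: expanding $f(Re^{is},re^{it})$ as a power series, integrating in $t$ (which forces the two $z_2$-indices to agree), and using the Poisson-kernel identity \eqref{inner product-monomials} together with the symmetry $P(Re^{is},e^{i\theta})=P(Re^{i\theta},e^{is})$ exactly as in the proof of Lemma \ref{first integral- difference between z_1f and f}, one obtains
\begin{align*}
\lim_{r\to 1^-}\frac{1}{(2\pi)^2}\iint_S\left|f(Re^{is},re^{it})\right|^2 P_{\mu_1}(Re^{is})\,ds\,dt = \lim_{r\to1^-}\frac{1}{2\pi}\sum_{n=0}^\infty r^{2n}\sum_{m,p=0}^\infty a_{m,n}\overline{a_{p,n}}\,R^{2(m\vee p)}\widehat\mu_1(p-m).
\end{align*}
In fact, by \eqref{difference-z_1f_f_first-integral} this is nothing but the right-hand side of \eqref{first-difference-formula-with-R} with the factor $R^2$ removed, so no new computation is needed.

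For each fixed $n$, set $g_n(z):=\sum_{m=0}^\infty a_{m,n}z^m$, the $n$-th slice of $f$ in the $z_1$-direction, viewed as a function on $\mathbb D$. The same computation, read in one variable, identifies the inner sum as
\begin{align*}
\frac{1}{2\pi}\sum_{m,p=0}^\infty a_{m,n}\overline{a_{p,n}}\,R^{2(m\vee p)}\widehat\mu_1(p-m)=\frac{1}{2\pi}\int_0^{2\pi}\left|g_n(Re^{is})\right|^2 P_{\mu_1}(Re^{is})\,ds\geqslant 0.
\end{align*}
I would then invoke the one-variable estimate of Richter \cite{Richter1}: there is a constant $C=C(\mu_1)$, independent of $R$, such that $\frac{1}{2\pi}\int_0^{2\pi}|g(Re^{is})|^2 P_{\mu_1}(Re^{is})\,ds\leqslant C\|g\|^2_{\mathcal D(\mu_1)}$ for every $g\in\mathcal D(\mu_1)$. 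Applied to $g=g_n$, this controls each nonnegative slice term by $C\|g_n\|^2_{\mathcal D(\mu_1)}=C\big(\|g_n\|^2_{H^2(\mathbb D)}+D_{\mu_1}(g_n)\big)$, where $D_{\mu_1}$ is the one-variable Dirichlet-type integral. Note that the finiteness of $\sum_n\|g_n\|^2_{\mathcal D(\mu_1)}$ below guarantees $g_n\in\mathcal D(\mu_1)$ for each $n$.

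Finally, since every summand is nonnegative and increasing in $r$, monotone convergence lets me interchange $\lim_{r\to1^-}$ with $\sum_n$ and discard the factors $r^{2n}\leqslant 1$. Summing the slice bounds over $n$ and using $\sum_n\|g_n\|^2_{H^2(\mathbb D)}=\|f\|^2_{H^2(\mathbb D^2)}$ together with $\sum_n D_{\mu_1}(g_n)=D^{(2)}_{\boldsymbol{\mu},1}(f)$ (which is exactly \eqref{LHS-first-R-integral} after letting $r,R\to1^-$), I would conclude
\begin{align*}
\lim_{r\to1^-}\frac{1}{(2\pi)^2}\iint_S\left|f(Re^{is},re^{it})\right|^2 P_{\mu_1}(Re^{is})\,ds\,dt\leqslant C\big(\|f\|^2_{H^2(\mathbb D^2)}+D^{(2)}_{\boldsymbol{\mu},1}(f)\big)\leqslant C\|f\|^2_{\boldsymbol{\mu}},
\end{align*}
which is the asserted inequality with $C=C(\mu_1)$. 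The essential point, and the main obstacle, is the uniform-in-$R$ one-variable estimate: the matrix $\big(\!\big(R^{2(j\vee k)}\widehat\mu_1(k-j)\big)\!\big)_{j,k\geqslant0}$ must be dominated, uniformly in $R$, by a constant multiple of the identity plus the Dirichlet matrix $M_{\mu_1}$. Everything else is bookkeeping of the slice decomposition and an interchange of limits justified by positivity.
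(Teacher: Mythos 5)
Your proof is correct, but it follows a genuinely different route from the paper's. The paper argues directly in two variables, mirroring Richter's one-variable argument: it splits $f(Re^{is},re^{it}) = f(0,0) + (T_{1,0}f)(Re^{is},re^{it}) + \sum_{n\geqslant1}a_{0,n}(re^{it})^{n}$, applies $|a+b+c|^2 \leqslant 3(|a|^2+|b|^2+|c|^2)$, identifies the contribution of the middle term with $D_{\boldsymbol{\mu},1}^{(2)}(f,R)$ through the tail identity of Lemma \ref{first integral and tail}, bounds the two remaining pieces by $\widehat{\mu_1}(0)\,\|f\|^2_{H^2(\mathbb D^2)}$, and ends with the explicit constant $C=\max\{3\widehat{\mu_1}(0)/\pi,\,3\}$. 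You instead diagonalize in the $z_2$-modes: the $t$-integration decouples the slices $g_n$, each inner sum is the one-variable quantity $\frac{1}{2\pi}\int_0^{2\pi}|g_n(Re^{is})|^2P_{\mu_1}(Re^{is})\,ds$, and you invoke Richter's uniform-in-$R$ one-variable estimate slice by slice, then sum via monotone convergence using $\sum_n\|g_n\|^2_{H^2(\mathbb D)}=\|f\|^2_{H^2(\mathbb D^2)}$ and $\sum_n D_{\mu_1}(g_n)=D^{(2)}_{\boldsymbol{\mu},1}(f)$; your identification of the left-hand side with \eqref{difference-z_1f_f_first-integral} (the right side of \eqref{first-difference-formula-with-R} without the factor $R^2$), and of the slice sums via \eqref{LHS-first-R-integral}, are both accurate, and both routes land on the same bound $C\big(\|f\|^2_{H^2(\mathbb D^2)}+D^{(2)}_{\boldsymbol{\mu},1}(f)\big)\leqslant C\|f\|^2_{\boldsymbol{\mu}}$. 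What your reduction buys: conceptual clarity (the two-variable estimate is literally a superposition of one-variable estimates, which explains why only $\mu_1$, the $H^2$ norm, and $D^{(2)}_{\boldsymbol{\mu},1}$ enter), no factor-$3$ loss, and all interchanges justified by positivity alone. What it costs: the crucial uniform-in-$R$ inequality is imported from \cite{Richter1} as a black box --- it is precisely the estimate Richter needs to show that $z$ is a bounded multiplier of $\mathcal D(\mu)$ --- whereas the paper re-derives that estimate in the two-variable setting from its own tail identity, keeping the development self-contained; of course, the proof of Richter's lemma is exactly the paper's argument read in one variable, so the two proofs are ultimately the same idea deployed at different levels.
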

\begin{proof}
    Let $Q:=\lim_{r \to 1^-} \frac{1}{(2\pi)^2} \iint_S \left|f(Re^{is}, re^{it}) \right|^2 P_{\mu_1}(Re^{is}) ds dt.$
    Note that 
    $$f(Re^{is},re^{it}) = f(0,0)+(T_{1,0}f)(Re^{is},re^{it})+\sum_{n=1}^\infty a_{0,n}(re^{it})^{n}.$$ 
    Therefore Lemmas \ref{first integral and tail} and   \ref{Dirichlet contained in Hardy} imply that
\begin{align*}
    |f(Re^{is},re^{it})|^2 &\leqslant 3\bigg(|f(0,0)|^2+|(T_{1,0}f)(Re^{is_1},re^{it})|^2 + \Big|\sum_{n=1}^\infty a_{0,n}(re^{it})^{n}\Big|^2\bigg)\\ 
    & \leqslant 3\bigg(\|f\|^2_{H^2(\mathbb D^2)} + \sum_{k_1=1}^\infty |(T_{k_1,0} f)(Re^{is_1},re^{it})|^2 + \Big|\sum_{n=1}^\infty a_{0,n}(re^{it})^{n}\Big|^2\bigg).
\end{align*}
    Using Lemma \ref{first integral and tail}, we deduce that
\begin{align*}
    Q \leqslant \frac{3}{2\pi}\|f\|^2_{H^2(\mathbb D^2)}\widehat{\mu_1}(0) + 3 D_{\boldsymbol{\mu}, 1}^{(2)}(f,R) + \frac{3}{2\pi} \widehat{\mu_1}(0) \lim_{r \to 1^-}\sum_{m=0}^\infty \sum_{n=0}^\infty |a_{m,n}|^2 r^{2n}.
\end{align*} 
   Choosing $C = \max\big\{\frac{3\widehat{\mu_1}(0)}{\pi} , 3\big\}$ completes the proof of the lemma.
\end{proof}
In this lemma, it is observed that the multiplication operator $M_{z_1}$ behaves like an isometry with respect to semi-norm $D_{\boldsymbol{\mu}, 2}^{(2)}(\cdot).$

\begin{lemma}\label{second integral- difference z_1f and f}
    If $f \in \mathcal{O}(\mathbb D^2)$ and $0 < R<1$, then $D_{\boldsymbol{\mu}, 2}^{(2)}(z_1f,R)  = D_{\boldsymbol{\mu}, 2}^{(2)}(f,R).$ 
\end{lemma}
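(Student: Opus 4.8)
The plan is to reduce the claim to the closed-form coefficient expression \eqref{formula for second integral} for $D_{\boldsymbol{\mu}, 2}^{(2)}(\cdot,R)$. Writing $f(z_1,z_2)=\sum_{m,n\geqslant 0} a_{m,n} z_1^m z_2^n$, the Taylor coefficients $b_{m,n}$ of $z_1 f$ satisfy $b_{m,n}=a_{m-1,n}$ for $m\geqslant 1$ and $b_{0,n}=0$. Substituting these into \eqref{formula for second integral}, the $m=0$ block vanishes, and I obtain
\[
D_{\boldsymbol{\mu}, 2}^{(2)}(z_1 f,R)=\lim_{r\to 1^-}\frac{1}{2\pi}\sum_{m=1}^\infty\sum_{n,q=1}^\infty M_{\mu_2}(n,q)\, a_{m-1,n}\overline{a_{m-1,q}}\, R^{2(n\vee q)} r^{2m}.
\]

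The key structural point is that the seminorm $D_{\boldsymbol{\mu}, 2}^{(2)}$ depends on the $z_1$-variable only through the weight $r^{2m}$, and multiplication by $z_1$ merely shifts this index by one. Reindexing $m\mapsto m+1$ turns $r^{2m}$ into $r^{2(m+1)}=r^2\, r^{2m}$, giving
\[
D_{\boldsymbol{\mu}, 2}^{(2)}(z_1 f,R)=\lim_{r\to 1^-} r^2\cdot\frac{1}{2\pi}\sum_{m=0}^\infty\sum_{n,q=1}^\infty M_{\mu_2}(n,q)\, a_{m,n}\overline{a_{m,q}}\, R^{2(n\vee q)} r^{2m}.
\]

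It then remains to pass $r^2$ through the limit. For each fixed $r$ the inner triple sum equals the nonnegative integral $\frac{1}{2\pi}\iint_{\mathbb{T}\times R\mathbb{D}}\left|\partial_2 f(re^{it},z_2)\right|^2 P_{\mu_2}(z_2)\, dA(z_2)\, dt$, which is nondecreasing in $r$ (its angular average $\frac{1}{2\pi}\int_0^{2\pi}\left|\partial_2 f(re^{it},z_2)\right|^2\, dt$ is increasing in $r$ by the holomorphicity of $f$, and $P_{\mu_2}\geqslant 0$). Hence its limit as $r\to 1^-$ exists in $[0,\infty]$ and equals $D_{\boldsymbol{\mu}, 2}^{(2)}(f,R)$ by \eqref{formula for second integral}; since $r^2\to 1$, the factor $r^2$ does not alter this limit whether it is finite or infinite, and I conclude $D_{\boldsymbol{\mu}, 2}^{(2)}(z_1 f,R)=D_{\boldsymbol{\mu}, 2}^{(2)}(f,R)$.

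I do not anticipate any genuine obstacle: the statement is essentially the bookkeeping fact that $M_{z_1}$ acts as a shift in the $z_1$-index while the $z_2$-seminorm is blind to that shift apart from the harmless factor $r^2$ that disappears in the limit. The only point meriting explicit justification is the interchange of $r^2$ with the limit, which is settled by the monotonicity noted above.
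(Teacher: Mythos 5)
Your proof is correct and follows exactly the route the paper intends: the paper's proof of this lemma is a one-line remark that it ``easily follows from \eqref{formula for second integral},'' and your argument supplies precisely those omitted details (coefficient substitution, the index shift $m\mapsto m+1$ producing the factor $r^2$, and the monotonicity-in-$r$ justification that $r^2$ is harmless in the limit, valid whether the seminorm is finite or infinite).
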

\begin{proof}
Proof of this lemma easily follows from \eqref{formula for second integral}, hence we skip the details.
\end{proof}
In the next two lemmas, we study the semi-norm $D_{\boldsymbol{\mu}, 3}^{(2)}(\cdot).$ We observe that $D_{\boldsymbol{\mu}, 3}^{(2)}(z_1f,R)\geqslant D_{\boldsymbol{\mu}, 3}^{(2)}(f,R)$ for any $0<R<1.$ 
\begin{lemma}\label{third integral and tail of Taylor series}
    Let $0 < R <1$. If $f$ is analytic on $\mathbb D^2,$ then 
\begin{align*}
    D_{\boldsymbol{\mu}, 3}^{(2)}(f,R) =   \sum_{k_1=1}^\infty \sum_{k_2=1}^\infty \frac{1}{(2\pi)^2} \iint_S |(T_{k_1,k_2} f)(Re^{is_1},Re^{is_2})|^2 \boldsymbol{P}_{\boldsymbol{\mu}}(Re^{is_1},Re^{is_2}) ds_1 ds_2.
\end{align*}
\end{lemma}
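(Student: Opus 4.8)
The plan is to compute both sides explicitly as quadruple series in the Taylor coefficients $a_{m,n}$ of $f$ and match them term by term, in the same spirit as the proof of Lemma \ref{first integral and tail}. Since the third seminorm involves no auxiliary boundary limit (unlike the first two), the argument is in fact cleaner: for a fixed $R<1$ everything takes place on the compact polydisc $\overline{R\mathbb D}\times\overline{R\mathbb D}$, where the Poisson integrals $P_{\mu_1},P_{\mu_2}$ are bounded and the power series of $\partial_1\partial_2 f$ converges uniformly, so every interchange of summation and integration below is legitimate.

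First I would record the basic monomial identity valid for $m,p\geqslant 1$ and any $\nu\in\mathcal M_+(\mathbb T)$:
\[
\int_{R\mathbb D}z^{m-1}\bar z^{p-1}P_{\nu}(z)\,dA(z)=\frac{1}{\pi}\,\frac{R^{2(m\vee p)}}{2(m\vee p)}\,\widehat\nu(p-m).
\]
This follows by passing to polar coordinates $z=\rho e^{i\theta}$, evaluating the angular integral with the help of \eqref{inner product-monomials} together with the symmetry $P(Re^{it},e^{is})=P(Re^{is},e^{it})$, and then integrating $\rho^{2(m\vee p)-1}$ over $[0,R]$ using $m+p-1+|m-p|=2(m\vee p)-1$. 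Substituting the expansion $\partial_1\partial_2 f=\sum_{m,n\geqslant 1}mn\,a_{m,n}z_1^{m-1}z_2^{n-1}$ into the definition of $D^{(2)}_{\boldsymbol{\mu},3}(f,R)$, applying this identity in each variable separately (the weight factors as $\boldsymbol P_{\boldsymbol{\mu}}=P_{\mu_1}P_{\mu_2}$), and simplifying with $\tfrac{mp}{m\vee p}=m\wedge p$, I obtain
\[
D^{(2)}_{\boldsymbol{\mu},3}(f,R)=\frac{1}{(2\pi)^2}\sum_{m,p=1}^{\infty}\sum_{n,q=1}^{\infty}(m\wedge p)(n\wedge q)\,a_{m,n}\overline{a_{p,q}}\,\widehat{\mu_1}(p-m)\,\widehat{\mu_2}(q-n)\,R^{2(m\vee p)}R^{2(n\vee q)}.
\]

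For the right-hand side I would fix $k_1,k_2\geqslant 1$, expand $|(T_{k_1,k_2}f)(Re^{is_1},Re^{is_2})|^2$ as a double sum over $m,p\geqslant k_1$ and $n,q\geqslant k_2$, and carry out the two angular integrals using the boundary form of the above identity, namely $\tfrac{1}{2\pi}\int_0^{2\pi}e^{i(m-p)s}P_{\mu_1}(Re^{is})\,ds=\tfrac{1}{2\pi}R^{|m-p|}\widehat{\mu_1}(p-m)$ and its $\mu_2$-analogue. Combining $R^{m+p+|m-p|}=R^{2(m\vee p)}$ and $R^{n+q+|n-q|}=R^{2(n\vee q)}$ gives, for each pair $(k_1,k_2)$, the contribution $\tfrac{1}{(2\pi)^2}\sum_{m,p\geqslant k_1}\sum_{n,q\geqslant k_2}a_{m,n}\overline{a_{p,q}}\,\widehat{\mu_1}(p-m)\widehat{\mu_2}(q-n)R^{2(m\vee p)}R^{2(n\vee q)}$. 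Summing over all $k_1,k_2\geqslant 1$ and interchanging the order of summation (justified since $R^{2(m\vee p)}\leqslant R^{m+p}$ forces $\sum_{m,n,p,q}|a_{m,n}||a_{p,q}|R^{2(m\vee p)+2(n\vee q)}\leqslant(\sum_{m,n}|a_{m,n}|R^{m+n})^2<\infty$), each quadruple $(m,n,p,q)$ with all entries $\geqslant 1$ is counted once for every $k_1\in\{1,\dots,m\wedge p\}$ and $k_2\in\{1,\dots,n\wedge q\}$, that is, with multiplicity $(m\wedge p)(n\wedge q)$. This reproduces exactly the displayed expression for $D^{(2)}_{\boldsymbol{\mu},3}(f,R)$, which finishes the proof.

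The only step that needs genuine care, rather than being routine, is the combinatorial counting in the last paragraph: the tail summation over $(k_1,k_2)$ is precisely what converts the geometric weights into the factor $(m\wedge p)(n\wedge q)$. This is the two-variable analogue of the single-index mechanism in Lemma \ref{first integral and tail} (where the sum over $k_1$ alone produced $m\wedge p$), and it is this identity that lets one translate between the ``tail of the Taylor series'' description and the seminorm. The remaining analytic justifications, namely the boundedness of $P_{\mu_i}$ on $\overline{R\mathbb D}$ and the absolute convergence of the quadruple series, are straightforward for $R<1$.
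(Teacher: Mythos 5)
Your proposal is correct and follows essentially the same route as the paper's proof: both sides are expanded as the quadruple series $\frac{1}{4\pi^2}\sum_{m,n,p,q\geqslant 1}(m\wedge p)(n\wedge q)\,a_{m,n}\overline{a_{p,q}}\,R^{2(m\vee p)}R^{2(n\vee q)}\,\widehat{\mu}_1(p-m)\,\widehat{\mu}_2(q-n)$, using the Poisson-kernel identity \eqref{inner product-monomials} together with its symmetry, and the tail summation over $(k_1,k_2)$ producing the multiplicity $(m\wedge p)(n\wedge q)$. The extra details you supply (the explicit monomial integral over $R\mathbb{D}$ and the absolute-convergence justification for interchanging sums and integrals) are exactly what the paper compresses into ``a bit of computation.''
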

\begin{proof}
Let $f\in\mathcal O(\mathbb D^2).$ 
Using uniform convergence of $\partial_1\partial_2 f$ on $R\mathbb D^2$ and a bit of computation, we get
\begin{align}\label{third integral expression}
     D_{\boldsymbol{\mu}, 3}^{(2)}(f,R)=\frac{1}{4\pi^2}  \sum_{m,n=1}^\infty \sum_{p,q=1}^\infty   M_{\mu_1}(m,p)M_{\mu_2}(n,q) a_{m,n} \overline{a_{p,q}} R^{2(m\vee p)} R^{2(n\vee q)}.
\end{align}
Now, using \eqref{inner product-monomials}, we get that
\begin{align*}
    & \sum_{k_1,k_2=1}^\infty \frac{1}{(2\pi)^2} \iint_S |(T_{k_1,k_2} f)(Re^{is_1},Re^{is_2})|^2 \boldsymbol{P}_{\boldsymbol{\mu}}(Re^{is_1},Re^{is_2}) ds_1 ds_2\\ 
     &=\sum_{k_1,k_2=1}^\infty \frac{1}{(2\pi)^2} \iint_S \Big|\sum_{m=k_1}^\infty \sum_{n=k_2}^\infty a_{m,n} (Re^{is_1})^{m} (Re^{is_2})^{n} \Big|^2  \boldsymbol{P}_{\boldsymbol{\mu}}(Re^{is_1},Re^{is_2}) ds_1 ds_2 \nonumber \\
    &=\sum_{k_1,k_2=1}^\infty\frac{1}{(2\pi)^2} 
    \iint_{S}\sum_{m,p=k_1}^\infty \sum_{n,q=k_2}^\infty a_{m,n} \overline{a_{p,q}} R^{2(m\vee p)} R^{2(n\vee q)} e^{i(m-p)\theta_1} e^{i(n-q)\theta_2}d\mu_1(\theta_1) d\mu_2(\theta_2) \nonumber \\
    &= \frac{1}{4\pi^2}  \sum_{m=1}^\infty \sum_{n=1}^\infty \sum_{p=1}^\infty \sum_{q=1}^\infty  (m\wedge p) (n\wedge q) a_{m,n} \overline{a_{p,q}} R^{2(m\vee p)} R^{2(n\vee q)} \widehat \mu_1(p - m) \widehat\mu_2(q-n). 
\end{align*}
   This completes the proof of the lemma.
\end{proof}

\begin{lemma}\label{third integral estimate for z_1f}
    Let $0 < R<1.$ If $f$ is in $\mathcal{D}^{2}(\boldsymbol{\mu})$, then there exists a constant $C=C(\boldsymbol{\mu})$ such that $D_{\boldsymbol{\mu}, 3}^{(2)}(z_1f,R) \leqslant C \|f\|^2_{\boldsymbol{\mu}}.$ 
\end{lemma}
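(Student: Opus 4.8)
The plan is to reduce the estimate to controlling a single error term that measures the failure of $M_{z_1}$ to be isometric for the third seminorm, and then to bound that error term by passing to a one-variable slice.

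First I would apply the tail formula of Lemma~\ref{third integral and tail of Taylor series} to $z_1f$. From the definition of $T_{k_1,k_2}$ and a shift of indices one has $T_{k_1,k_2}(z_1f)=z_1\,T_{k_1-1,k_2}f$ for every $k_1\geqslant 1$, so $|(T_{k_1,k_2}(z_1f))(Re^{is_1},Re^{is_2})|^2=R^2|(T_{k_1-1,k_2}f)(Re^{is_1},Re^{is_2})|^2$. Summing over $k_1,k_2\geqslant 1$ and reindexing $j=k_1-1$, the terms with $j\geqslant 1$ reassemble (again by Lemma~\ref{third integral and tail of Taylor series}) into $D_{\boldsymbol\mu,3}^{(2)}(f,R)$, while the $j=0$ term is
\[
E(f,R):=\sum_{k_2=1}^{\infty}\frac{1}{(2\pi)^2}\iint_S\big|(T_{0,k_2}f)(Re^{is_1},Re^{is_2})\big|^2\,\boldsymbol P_{\boldsymbol\mu}(Re^{is_1},Re^{is_2})\,ds_1\,ds_2.
\]
This yields the identity $D_{\boldsymbol\mu,3}^{(2)}(z_1f,R)=R^2D_{\boldsymbol\mu,3}^{(2)}(f,R)+R^2E(f,R)$. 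Since $0<R<1$ and each of the three seminorms is dominated by $\|f\|_{\boldsymbol\mu}^2$, the first summand is at most $\|f\|_{\boldsymbol\mu}^2$; the whole task is thus to show $E(f,R)\leqslant C\|f\|_{\boldsymbol\mu}^2$.

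To bound $E(f,R)$ I would split $T_{0,k_2}f$ into its $z_1$-free part and the remainder, $T_{0,k_2}f=\sum_{n\geqslant k_2}a_{0,n}z_2^n+T_{1,k_2}f$, and apply $|x+y|^2\leqslant 2|x|^2+2|y|^2$. For the $T_{1,k_2}f$ piece, the elementary domination $|(T_{1,k_2}f)|^2\leqslant\sum_{k_1\geqslant 1}|(T_{k_1,k_2}f)|^2$ together with Lemma~\ref{third integral and tail of Taylor series} bounds the resulting sum by $2D_{\boldsymbol\mu,3}^{(2)}(f,R)\leqslant 2\|f\|_{\boldsymbol\mu}^2$. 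For the $z_1$-free piece the $s_1$-integral factors off as $\tfrac{1}{2\pi}\int_0^{2\pi}P_{\mu_1}(Re^{is_1})\,ds_1=\tfrac{1}{2\pi}\widehat{\mu_1}(0)$, so (after the factor $2$) this piece equals $\tfrac{\widehat{\mu_1}(0)}{\pi}\sum_{k_2\geqslant 1}\tfrac{1}{2\pi}\int_0^{2\pi}\big|\sum_{n\geqslant k_2}a_{0,n}(Re^{is_2})^n\big|^2P_{\mu_2}(Re^{is_2})\,ds_2$.

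The key point is that the sum appearing here is exactly $D_{\boldsymbol\mu,2}^{(2)}(f(0,z_2),R)$: applying the tail formula of Lemma~\ref{formula for second integral interms of tail of the taylor} to the $z_1$-independent function $f(0,z_2)$ makes the $t$-integral trivial and reproduces this expression. The slice estimate of Lemma~\ref{semi-norm of slice functions} then gives $D_{\boldsymbol\mu,2}^{(2)}(f(0,z_2),R)\leqslant D_{\boldsymbol\mu,2}^{(2)}(f(0,z_2))\leqslant D_{\boldsymbol\mu,2}^{(2)}(f)\leqslant\|f\|_{\boldsymbol\mu}^2$. Combining the two pieces yields $E(f,R)\leqslant\big(2+\tfrac{1}{\pi}\widehat{\mu_1}(0)\big)\|f\|_{\boldsymbol\mu}^2$, and hence the lemma with $C=3+\tfrac{1}{\pi}\widehat{\mu_1}(0)$. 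The main obstacle is precisely the error term $E(f,R)$: because $M_{z_1}$ is genuinely non-isometric for this seminorm, one cannot proceed by a pointwise comparison of weights (the factor $P_{\mu_1}(Re^{is_1})$ blows up as $R\to 1$), and the crux is to identify the $z_1$-free defect with the second Dirichlet seminorm of the slice $f(0,z_2)$ so that Lemma~\ref{semi-norm of slice functions} can be brought to bear.
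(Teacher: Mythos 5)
Your proof is correct and follows essentially the same route as the paper: both apply the tail formula of Lemma~\ref{third integral and tail of Taylor series} to $z_1f$ via the shift $T_{k_1,k_2}(z_1f)=z_1T_{k_1-1,k_2}f$, split off the $k_1=0$ defect, bound it by $2|z_1\text{-free part}|^2+2|T_{1,k_2}f|^2$, and identify the $z_1$-free piece (after factoring out $\widehat{\mu_1}(0)$ from the $s_1$-integral) with $D_{\boldsymbol{\mu},2}^{(2)}$ of the slice $f(0,z_2)$, which Lemma~\ref{semi-norm of slice functions} controls. Your version is in fact slightly tidier: you retain the factor $R^2$ on $D_{\boldsymbol{\mu},3}^{(2)}(f,R)$ in the decomposition identity (the paper drops it, harmlessly for the inequality), and your final constant $C=3+\tfrac{1}{\pi}\widehat{\mu_1}(0)$ is explicitly independent of $R$.
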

\begin{proof}
Since $T_{k_1,k_2} f=T_{k_1+1,k_2} (z_1f)$ for all $k_1,k_2\in\mathbb Z_{\geqslant 0},$ it follows from Lemma \ref{third integral and tail of Taylor series} that 
\begin{align*}
    D_{\boldsymbol{\mu}, 3}^{(2)}(z_1f,R) =  \sum_{k_2=1}^\infty \frac{R^2}{(2\pi)^2} \iint_S  |(T_{0,k_2} f)(Re^{is_1},Re^{is_2})|^2 \boldsymbol{P}_{\boldsymbol{\mu}}(Re^{is_1},Re^{is_2}) ds_1 ds_2 +D_{\boldsymbol{\mu}, 3}^{(2)}(f,R)
\end{align*}
  Note that
\begin{align*}
    |(T_{0,k_2} f)(Re^{is_1},Re^{is_2})|^2 &= \Big|\sum_{m=0}^\infty \sum_{n=k_2}^\infty a_{m,n} (Re^{is_1})^{m} (Re^{is_2})^{n}\Big|^2 \\  
    &\leqslant 2\bigg(\Big|\sum_{n=k_2}^\infty a_{0,n}(Re^{is_2})^{n}\Big|^2 + |(T_{1,k_2}f)(Re^{is_1},Re^{is_2})|^2\bigg) \\
    & \leqslant 2\bigg(\Big|\sum_{n=k_2}^\infty a_{0,n}(Re^{is_2})^{n}\Big|^2 + \sum_{k_1=1}^\infty |(T_{k_1,k_2} f)(Re^{is_1},Re^{is_2})|^2\bigg).
\end{align*} 
    Therefore, using Lemma \ref{third integral and tail of Taylor series}, we get 
\begin{align}\label{4.7 computation}
     &\sum_{k_2=1}^\infty \frac{R^2}{(2\pi)^2} \iint_S |(T_{0,k_2} f)(Re^{is_1},Re^{is_2})|^2 \boldsymbol{P}_{\boldsymbol{\mu}}(Re^{is_1},Re^{is_2}) ds_1 ds_2 \nonumber \\
     &\leqslant \sum_{k_2=1}^\infty \frac{2R^2}{(2\pi)^2} \iint_S \Big|\sum_{n=k_2}^\infty a_{0,n}(Re^{is_2})^{n}\Big|^2 \boldsymbol{P}_{\boldsymbol{\mu}}(Re^{is_1},Re^{is_2}) ds_1 ds_2 +2R^2 D_{\boldsymbol{\mu}, 3}^{(2)}(f,R) 
\end{align}
    Since 
\begin{align*}
    \iint_{\mathbb T \times R\mathbb D} |\partial_2 f(0,z_2)|^2 & P_{\mu_2} (z_2)  dA(z_2) dt = \sum_{n,q=1}^{\infty} (m \wedge q) a_{0,n} ~\overline{a_{0,q}} \widehat{\mu_2}(q-n) R^{2(m \vee q)} \\
    & =  \frac{1}{\widehat{\mu_1}(0)} \sum_{k_2=1}^\infty \iint_S \Big|\sum_{n=k_2}^\infty a_{0,n}(Re^{is_2})^{n}\Big|^2 \boldsymbol{P}_{\boldsymbol{\mu}}(Re^{is_1},Re^{is_2}) ds_1 ds_2. 
\end{align*}
   The inequality in \eqref{4.7 computation} now gives
\begin{align*}
    &\sum_{k_2=1}^\infty \frac{R^2}{(2\pi)^2} \iint_S |(T_{0,k_2} f)(Re^{is_1},Re^{is_2})|^2 \boldsymbol{P}_{\boldsymbol{\mu}}(Re^{is_1},Re^{is_2}) ds_1 ds_2 \\
    & \leqslant \frac{2R^2}{2 \pi} \widehat{\mu_1}(0) D^{(2)}_{\boldsymbol{\mu},2} f(0,z_2) + 2R^2 D_{\boldsymbol{\mu}, 3}^{(2)}(f,R) 
\end{align*}
    By Lemma \ref{third integral and tail of Taylor series}, Lemma \ref{semi-norm of slice functions} and the above computation, we note that, 
\begin{align*}
    \iint_{R\mathbb D^2} \left | \partial_1 \partial_2 z_1f(\boldsymbol{z}) \right |^2 \boldsymbol{P}_{\boldsymbol{\mu}}(\boldsymbol{z}) d\boldsymbol{A}(\boldsymbol{z}) &\leqslant \frac{R^2}{\pi} \widehat{\mu_1}(0) D^{(2)}_{\boldsymbol{\mu},2} (f,R) + 2R^2 D_{\boldsymbol{\mu}, 3}^{(2)}(f,R) + D_{\boldsymbol{\mu}, 3}^{(2)}(f,R) \\
    & \leqslant C \| f\|^2_{\boldsymbol{\mu}}
\end{align*}
    where $C= \max \big\{ \frac{R^2}{\pi} \widehat{\mu_1}(0), 2R^2, 1\big\}$.
\end{proof}
The proof of above lemma, in the limit $R\to 1,$ implies that 
\begin{eqnarray}\label{third integral z1f is bigger than that of f}
    D_{\boldsymbol{\mu}, 3}^{(2)}(z_1f)\geqslant D_{\boldsymbol{\mu}, 3}^{(2)}(f).
\end{eqnarray}
\begin{theorem}\label{Gleason property at origin}
    Suppose $f\in\mathcal O(\mathbb D^2)$. Then the following statements are equivalent
    \begin{enumerate}
    \item[(i)] $f \in \mathcal{D}^{2}(\boldsymbol{\mu})$
    \item[(ii)] $z_1f \in \mathcal{D}^{2}(\boldsymbol{\mu})$
    \item[(iii)] $z_2f\in \mathcal{D}^{2}(\boldsymbol{\mu})$.
    \end{enumerate}
    Moreover, in this case,  $\|z_if\|_{\boldsymbol{\mu}}\geqslant  \|f\|_{\boldsymbol{\mu}}$ for $i=1,2.$ 
\end{theorem}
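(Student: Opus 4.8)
The plan is to prove the equivalences by controlling each of the three semi-norms $D^{(2)}_{\boldsymbol{\mu},j}$, $j=1,2,3$, separately, and then recombining them through $\|f\|^2_{\boldsymbol{\mu}}=\|f\|^2_{H^2(\mathbb D^2)}+\sum_{j=1}^3 D^{(2)}_{\boldsymbol{\mu},j}(f)$. By the symmetry of the construction under interchanging the two coordinates (which swaps $\mu_1\leftrightarrow\mu_2$ and sends each lemma to its unstated analogue), it suffices to establish (i) $\Leftrightarrow$ (ii); the equivalence (i) $\Leftrightarrow$ (iii) is identical with the roles of $z_1$ and $z_2$ exchanged. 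Two observations will be used repeatedly. First, for each $j$ the truncated quantity $D^{(2)}_{\boldsymbol{\mu},j}(\,\cdot\,,R)$ is nondecreasing in $R\in(0,1)$, since it is the integral of a nonnegative integrand over a region that expands with $R$; consequently $D^{(2)}_{\boldsymbol{\mu},j}(g)=\lim_{R\to1^-}D^{(2)}_{\boldsymbol{\mu},j}(g,R)$ exists in $[0,\infty]$ and every uniform-in-$R$ bound passes to the limit. Second, the truncated identities in Lemmas \ref{first integral- difference between z_1f and f}, \ref{second integral- difference z_1f and f} and \ref{third integral estimate for z_1f} are valid for \emph{every} $f\in\mathcal O(\mathbb D^2)$, whereas the limiting formula \eqref{first-difference-formula} presupposes $f\in\mathcal D^2(\boldsymbol\mu)$; this distinction is what makes the reverse implication delicate.

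For the implication (i) $\Rightarrow$ (ii), assume $f\in\mathcal D^2(\boldsymbol\mu)$, so $\|f\|_{\boldsymbol\mu}<\infty$. For the first semi-norm, \eqref{first-difference-formula} gives $D^{(2)}_{\boldsymbol{\mu},1}(z_1 f)=D^{(2)}_{\boldsymbol{\mu},1}(f)+\lim_{r\to1^-}\frac{1}{(2\pi)^2}\iint_S|\tilde f(e^{is},re^{it})|^2 P_{\mu_1}(e^{is})\,ds\,dt$, and the second term is at most $C\|f\|^2_{\boldsymbol\mu}$ by Lemma \ref{intermidate estimate for first integral-difference z_1f and f} (whose bound is uniform in $R$), so $D^{(2)}_{\boldsymbol{\mu},1}(z_1 f)<\infty$. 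For the second semi-norm, Lemma \ref{second integral- difference z_1f and f} yields $D^{(2)}_{\boldsymbol{\mu},2}(z_1 f)=D^{(2)}_{\boldsymbol{\mu},2}(f)<\infty$ upon letting $R\to1^-$. For the third, Lemma \ref{third integral estimate for z_1f} gives $D^{(2)}_{\boldsymbol{\mu},3}(z_1 f,R)\le C\|f\|^2_{\boldsymbol\mu}$ uniformly in $R$, whence $D^{(2)}_{\boldsymbol{\mu},3}(z_1 f)\le C\|f\|^2_{\boldsymbol\mu}<\infty$. Summing, $D^{(2)}_{\boldsymbol\mu}(z_1 f)<\infty$, and Lemma \ref{Dirichlet contained in Hardy} then places $z_1 f$ in $\mathcal D^2(\boldsymbol\mu)$.

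For the converse (ii) $\Rightarrow$ (i), I work entirely at the truncated level. Lemma \ref{first integral- difference between z_1f and f} shows the right-hand side of \eqref{first-difference-formula-with-R} is nonnegative, so $R^2 D^{(2)}_{\boldsymbol{\mu},1}(f,R)\le D^{(2)}_{\boldsymbol{\mu},1}(z_1 f,R)\le D^{(2)}_{\boldsymbol{\mu},1}(z_1 f)$; letting $R\to1^-$ gives $D^{(2)}_{\boldsymbol{\mu},1}(f)\le D^{(2)}_{\boldsymbol{\mu},1}(z_1 f)<\infty$. Lemma \ref{second integral- difference z_1f and f} gives $D^{(2)}_{\boldsymbol{\mu},2}(f)=D^{(2)}_{\boldsymbol{\mu},2}(z_1 f)<\infty$. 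Finally, the decomposition established in the proof of Lemma \ref{third integral estimate for z_1f} exhibits $D^{(2)}_{\boldsymbol{\mu},3}(z_1 f,R)$ as $D^{(2)}_{\boldsymbol{\mu},3}(f,R)$ plus a nonnegative term, so $D^{(2)}_{\boldsymbol{\mu},3}(f,R)\le D^{(2)}_{\boldsymbol{\mu},3}(z_1 f)$, and $R\to1^-$ yields $D^{(2)}_{\boldsymbol{\mu},3}(f)\le D^{(2)}_{\boldsymbol{\mu},3}(z_1 f)<\infty$. Hence $D^{(2)}_{\boldsymbol\mu}(f)<\infty$ and $f\in\mathcal D^2(\boldsymbol\mu)$.

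The norm inequality is then immediate from the same three comparisons. Since $M_{z_1}$ acts as the isometric forward shift on $H^2(\mathbb D^2)$, one has $\|z_1 f\|^2_{H^2(\mathbb D^2)}=\|f\|^2_{H^2(\mathbb D^2)}$; combining this with $D^{(2)}_{\boldsymbol{\mu},1}(z_1 f)\ge D^{(2)}_{\boldsymbol{\mu},1}(f)$ (from \eqref{first-difference-formula}), $D^{(2)}_{\boldsymbol{\mu},2}(z_1 f)=D^{(2)}_{\boldsymbol{\mu},2}(f)$, and $D^{(2)}_{\boldsymbol{\mu},3}(z_1 f)\ge D^{(2)}_{\boldsymbol{\mu},3}(f)$ (from \eqref{third integral z1f is bigger than that of f}) gives $\|z_1 f\|^2_{\boldsymbol\mu}\ge\|f\|^2_{\boldsymbol\mu}$. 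The statements for $z_2$, and the equivalence (i) $\Leftrightarrow$ (iii), follow verbatim after interchanging the two coordinates. The only genuine subtlety, and the main obstacle, is bookkeeping with the limits: one must use the $R$-truncated identities (valid for all analytic $f$) in the direction (ii) $\Rightarrow$ (i), invoke the monotonicity of $D^{(2)}_{\boldsymbol{\mu},j}(\cdot,R)$ in $R$ to justify passing to the limit, and keep the constant $C$ in the two estimate lemmas independent of $R$.
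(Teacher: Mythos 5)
Your proof is correct and follows essentially the same route as the paper: both reduce to (i)$\Leftrightarrow$(ii) by coordinate symmetry, control the three semi-norms separately via Lemma~\ref{first integral- difference between z_1f and f}, Lemma~\ref{intermidate estimate for first integral-difference z_1f and f}, Lemma~\ref{second integral- difference z_1f and f}, Lemma~\ref{third integral estimate for z_1f} together with \eqref{first-difference-formula} and \eqref{third integral z1f is bigger than that of f}, and then sum with the $H^2(\mathbb D^2)$ term for the norm inequality. If anything, your treatment of (ii)$\Rightarrow$(i) is more careful than the paper's, which cites \eqref{first-difference-formula} directly --- an identity stated only for $f\in\mathcal D^2(\boldsymbol{\mu})$, i.e.\ presupposing the conclusion --- whereas you work with the $R$-truncated identity \eqref{first-difference-formula-with-R}, valid for every $f\in\mathcal O(\mathbb D^2)$, and only then pass to the limit $R\to 1^-$.
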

\begin{proof} It is enough to prove the equivalence of $(i)$ and $(ii)$ only.
Combining Lemma \ref{first integral- difference between z_1f and f}, Lemma \ref{intermidate estimate for first integral-difference z_1f and f}, Lemma \ref{second integral- difference z_1f and f}, and Lemma \ref{third integral estimate for z_1f} proves that $(i)$ implies $(ii)$. For the other way implication, we apply equation \eqref{first-difference-formula} with Lemma \ref{second integral- difference z_1f and f} and Equation \eqref{third integral z1f is bigger than that of f}.
\end{proof}
For an infinite matrix $X:(\!(x_{mp})\!),$ define $\sigma^*X$ (see \cite[page 151]{Shimorin}) to be the matrix whose $(m,p)$-th entry is given by $x_{m+1,p+1}.$
\begin{lemma}
       For any polynomial $p \in \mathcal{D}^{2}(\boldsymbol{\mu}),$ and $i=1,2$ we have the following relations;
\[\|z_i^2p\|^2_{\boldsymbol{\mu}} -2\|z_ip\|^2_{\boldsymbol{\mu}}+\|p\|^2_{\boldsymbol{\mu}} = 0.\]
\end{lemma}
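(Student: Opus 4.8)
The plan is to recast the identity as the statement that the first-order \emph{defect} of $M_{z_i}$ is invariant under $M_{z_i}$. Writing $\delta^{(i)}(f):=\|z_if\|_{\boldsymbol\mu}^2-\|f\|_{\boldsymbol\mu}^2$, there is the purely algebraic identity
\[
\|z_i^2p\|_{\boldsymbol\mu}^2-2\|z_ip\|_{\boldsymbol\mu}^2+\|p\|_{\boldsymbol\mu}^2=\delta^{(i)}(z_ip)-\delta^{(i)}(p),
\]
so it suffices to prove $\delta^{(i)}(z_ip)=\delta^{(i)}(p)$ for every polynomial $p$. I will treat $i=1$; the case $i=2$ is identical after interchanging the two variables. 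Using the decomposition $\|\cdot\|_{\boldsymbol\mu}^2=\|\cdot\|_{H^2(\mathbb D^2)}^2+D_{\boldsymbol\mu,1}^{(2)}+D_{\boldsymbol\mu,2}^{(2)}+D_{\boldsymbol\mu,3}^{(2)}$, the defect splits as $\delta^{(1)}=\delta_{H^2}+\delta_1+\delta_2+\delta_3$, where each summand is the defect of the corresponding quadratic form; it is then enough to show that each of these four defects is invariant under $M_{z_1}$.

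The two pieces aligned with the first variable's "tangential" directions vanish identically. Since $M_{z_1}$ is an isometry of $H^2(\mathbb D^2)$, one has $\delta_{H^2}\equiv0$; and Lemma~\ref{second integral- difference z_1f and f}, after letting $R\to1$, gives $D_{\boldsymbol\mu,2}^{(2)}(z_1f)=D_{\boldsymbol\mu,2}^{(2)}(f)$, so $\delta_2\equiv0$ as well. Both are therefore trivially $M_{z_1}$-invariant.

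For the remaining two pieces I will use the explicit difference formulas already established. For $\delta_1$, equation~\eqref{first-difference-formula} identifies
\[
\delta_1(f)=\lim_{r\to1^-}\frac1{(2\pi)^2}\iint_S\bigl|\tilde f(e^{is},re^{it})\bigr|^2P_{\mu_1}(e^{is})\,ds\,dt,
\]
where $\tilde f$ is the boundary value of $f$ in the first variable; since passing from $f$ to $z_1f$ replaces $\tilde f$ by $e^{is}\tilde f$ and $|e^{is}|=1$ on $\mathbb T$, the integrand is unchanged, whence $\delta_1(z_1p)=\delta_1(p)$. For $\delta_3$, the computation inside the proof of Lemma~\ref{third integral estimate for z_1f} shows that $D_{\boldsymbol\mu,3}^{(2)}(z_1f,R)-D_{\boldsymbol\mu,3}^{(2)}(f,R)=R^2E(f,R)$, where I abbreviate
\[
E(f,R):=\sum_{k_2=1}^\infty\frac1{(2\pi)^2}\iint_S\bigl|(T_{0,k_2}f)(Re^{is_1},Re^{is_2})\bigr|^2\boldsymbol P_{\boldsymbol\mu}(Re^{is_1},Re^{is_2})\,ds_1\,ds_2.
\]
The elementary identity $T_{0,k_2}(z_1f)=z_1\,(T_{0,k_2}f)$ together with $|Re^{is_1}|^2=R^2$ gives $E(z_1f,R)=R^2E(f,R)$, and hence $\delta_3(z_1p)=\lim_{R\to1}R^2E(z_1p,R)=\lim_{R\to1}R^4E(p,R)=\lim_{R\to1}R^2E(p,R)=\delta_3(p)$.

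Adding the four contributions gives $\delta^{(1)}(z_1p)=\delta^{(1)}(p)$, which is exactly the claimed $2$-isometry identity for $i=1$; the $i=2$ case follows by symmetry. The only point demanding genuine care is the $\delta_3$ term, where one must track the $R^2$-factors and justify passing to the limit $R\to1$; because $p$ is a polynomial every series is finite and every limit exists, so this interchange is harmless, and the underlying structural reason all four defects behave well is simply that $|z_1|\equiv1$ on each circle over which the relevant integrals are taken.
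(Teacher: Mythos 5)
Your proposal is correct, but it takes a genuinely different route from the paper. The paper's proof is a direct computation: it evaluates all the semi-norms of $p$, $z_1p$, $z_1^2p$ at $R=1$ via \eqref{LHS-first-R-integral}, \eqref{formula for second integral} and \eqref{third integral expression}, expressing them through the matrices $M_{\mu_1}$, $M_{\mu_2}$, their tensor products, and the shift $\sigma^*$, and then observes that the second difference vanishes --- the structural reason being that $\sigma^*M_{\nu}-M_{\nu}=\big(\!\big(\widehat\nu(k-j)\big)\!\big)$ is independent of the diagonal shift, i.e.\ $(\sigma^*-I)^2M_{\nu}=0$. You instead telescope the second difference into $\delta^{(1)}(z_1p)-\delta^{(1)}(p)$ and prove invariance of the defect piece by piece, reusing the paper's difference formulas (the $H^2$ isometry, Lemma \ref{second integral- difference z_1f and f}, equation \eqref{first-difference-formula}, and the computation in Lemma \ref{third integral estimate for z_1f}); the unifying reason in your argument is that the symbol $z_1$ is unimodular on every circle of integration. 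Your route is the more conceptual one (it exhibits the $2$-isometry identity as shift-invariance of the defect, visible directly from the boundary-integral representations) and avoids redoing any computation; the paper's route is self-contained matrix algebra whose by-product, the identity $(\sigma^*-I)^2M_{\mu_i}=0$, is needed again in Lemma \ref{R-dilation is contraction} via Shimorin's positivity theorem, which is presumably why the authors organized the proof that way. One caveat worth recording: the displayed identity you quote from Lemma \ref{third integral estimate for z_1f} is slightly off as printed in the paper; since $T_{k_1+1,k_2}(z_1f)=z_1\,T_{k_1,k_2}f$ (not $T_{k_1,k_2}f$), the correct statement is
\begin{equation*}
D_{\boldsymbol{\mu},3}^{(2)}(z_1f,R)=R^2E(f,R)+R^2D_{\boldsymbol{\mu},3}^{(2)}(f,R),
\end{equation*}
so that $D_{\boldsymbol{\mu},3}^{(2)}(z_1f,R)-D_{\boldsymbol{\mu},3}^{(2)}(f,R)=R^2E(f,R)-(1-R^2)D_{\boldsymbol{\mu},3}^{(2)}(f,R)$. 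For a polynomial $p$ the extra term $(1-R^2)D_{\boldsymbol{\mu},3}^{(2)}(p,R)$ tends to $0$ as $R\to1^-$, so your conclusion $\delta_3(z_1p)=\delta_3(p)$, and hence the whole argument, stands; but you should state the corrected identity rather than the printed one.
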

\begin{proof}
    On computing the norm of $p,z_1p,z_1^2p$ for $p(z_1,z_2)= \sum_{m,n=0}^k a_{m,n} z_1^{m} z_2^{n}$ with respect to the defined norm on $\mathcal{D}^{2}(\boldsymbol{\mu})$,  we have the following.
    Note that 
    \[\|p\|_{H^2(\mathbb D^2)}^2 = \|z_1p\|_{H^2(\mathbb D^2)}^2 = \|z_1^2p\|_{H^2(\mathbb D^2)}^2=\sum_{m,n=0}^k  |a_{m,n}|^2.\] 
Therefore it is enough to check
    \[D_{\boldsymbol{\mu}}^{(2)}(z_i^2 p)- 2D_{\boldsymbol{\mu}}^{(2)}(z_i p)+ D_{\boldsymbol{\mu}}^{(2)}(p)=0.\]
   Now, from \eqref{LHS-first-R-integral}, \eqref{formula for second integral}, \eqref{third integral expression}, with $R=1,$ it follows that
\begin{align*}
    D_{\boldsymbol{\mu}, 1}^{(2)}(p)
    =  \frac{1}{2\pi} \sum_{n=0}^k \langle M_{\mu_1}\boldsymbol{a}_{\cdot n},\boldsymbol{a}_{\cdot n}\rangle \quad \mbox{and} \quad  D_{\boldsymbol{\mu}, 2}^{(2)}(p)
     =  \frac{1}{2\pi} \sum_{m=0}^k \langle M_{\mu_2}\boldsymbol{a}_{ m\cdot},\boldsymbol{a}_{m\cdot}\rangle.
\end{align*}
 In the same spirit, we also get the formulae:
\[D_{\boldsymbol{\mu}, 3}^{(2)}(p)
    = \frac{1}{4\pi^2} \langle (M_{\mu_1}\otimes M_{\mu_2}) \boldsymbol{a},  \boldsymbol{a}\rangle, \ D^{(2)}_{\boldsymbol{\mu},1}(z_1p) 
       =  \frac{1}{2\pi} \sum_{n=0}^k \langle \sigma^*M_{\mu_1}\boldsymbol{a}_{\cdot n},\boldsymbol{a}_{\cdot n}\rangle,\]
   \[D^{(2)}_{\boldsymbol{\mu},2}(z_1p) 
   =D^{(2)}_{\boldsymbol{\mu},2}(z_1^2p)
    = \frac{1}{2\pi} \sum_{m=0}^k \langle M_{\mu_2}\boldsymbol{a}_{m\cdot},\boldsymbol{a}_{m\cdot}\rangle,\]
   \[D^{(2)}_{\boldsymbol{\mu},3}(z_1p)      
   = 
\frac{1}{4\pi^2} \langle (\sigma^*M_{\mu_1}\otimes M_{\mu_2}) \boldsymbol{a}, \boldsymbol{a}\rangle, \ D^{(2)}_{\boldsymbol{\mu},1}(z_1^2p)
    = \frac{1}{2\pi} \sum_{n=0}^k \langle (\sigma^*)^2M_{\mu_1}\boldsymbol{a}_{\cdot n},\boldsymbol{a}_{\cdot n}\rangle,\]
   and 
\begin{eqnarray*}
D^{(2)}_{\boldsymbol{\mu},3}(z_1^2p)   
   = \frac{1}{4\pi^2} \langle ((\sigma^*)^2M_{\mu_1}\otimes M_{\mu_2}) \boldsymbol{a}, \boldsymbol{a}\rangle. 
\end{eqnarray*}
    The above computations put together prove that  
   $\|z_1^2 p\|_{\boldsymbol{\mu}}^2 - 2 \|z_1p\|_{\boldsymbol{\mu}}^2 + \|p\|_{\boldsymbol{\mu}}^2 = 0.$
    Similarly, we can prove that  $\|z_2^2 p\|_{\boldsymbol{\mu}}^2 - 2 \|z_2p\|_{\boldsymbol{\mu}}^2 + \|p\|_{\boldsymbol{\mu}}^2 = 0.$  
\end{proof}
    For any analytic function $f$ in $\mathbb D^2$ and $0 < r <1$, define $f_r(z_1,z_2) := f(rz_1, rz_2)$ for $(z_1,z_2)\in\overline{\mathbb D}$.
\begin{lemma}\label{R-dilation is contraction}
    For any $f \in \mathcal{D}^{2}(\boldsymbol{\mu})$,  the relation $D^{(2)}_{\boldsymbol{\mu}}(f_r) \leqslant D^{(2)}_{\boldsymbol{\mu}}(f)$ holds for any $0<r<1$.
\end{lemma}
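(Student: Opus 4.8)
The plan is to reduce the statement to a single inequality between positive semidefinite matrices and then feed it into the tensor/quadratic-form descriptions of the three Dirichlet integrals. Write $f(z_1,z_2)=\sum_{m,n}a_{m,n}z_1^mz_2^n$; then the dilate $f_r$ has Taylor coefficients $a_{m,n}r^{m+n}$, and since $0<r<1$ these decay geometrically, so $f_r$ is holomorphic on a neighbourhood of $\overline{\mathbb D^2}$ and every integral/series below converges absolutely with the $R\to1^-$ limits attained. Let $D_r$ denote the diagonal operator with $(D_r)_{kk}=r^k$ (acting on whichever index set occurs). The passage $f\mapsto f_r$ is then implemented, in coefficients, by $D_r\otimes D_r$: on the $z_1$-indices it multiplies coordinate $m$ by $r^m$ and on the $z_2$-indices it multiplies coordinate $n$ by $r^n$.

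Next I record the three pieces as quadratic forms by sending $R\to1^-$ in \eqref{LHS-first-R-integral}, \eqref{formula for second integral} and \eqref{third integral expression}; up to the constants already present there,
\[
D^{(2)}_{\boldsymbol\mu,1}(f)=\tfrac{1}{2\pi}\langle (M_{\mu_1}\otimes I)\boldsymbol a,\boldsymbol a\rangle,\qquad
D^{(2)}_{\boldsymbol\mu,2}(f)=\tfrac{1}{2\pi}\langle (I\otimes M_{\mu_2})\boldsymbol a,\boldsymbol a\rangle,\qquad
D^{(2)}_{\boldsymbol\mu,3}(f)=\tfrac{1}{4\pi^2}\langle (M_{\mu_1}\otimes M_{\mu_2})\boldsymbol a,\boldsymbol a\rangle,
\]
where $I$ is the identity (Gram) matrix in the inert variable. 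Applying these to $f_r$ converts each form into $\langle (D_rA_1D_r)\otimes(D_rA_2D_r)\boldsymbol a,\boldsymbol a\rangle$ with $A_i\in\{M_{\mu_i},I\}$. Hence the whole lemma follows once I establish, for every $\nu\in\mathcal M_+(\mathbb T)$ and $0<r<1$, the two matrix inequalities $D_rM_\nu D_r\preceq M_\nu$ and $D_r^2\preceq I$ (the latter is immediate, its entries being $r^{2k}\leqslant1$), together with the elementary monotonicity $0\preceq A\preceq A'$, $0\preceq B\preceq B'\Rightarrow A\otimes B\preceq A'\otimes B'$.

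The heart of the matter is therefore $D_rM_\nu D_r\preceq M_\nu$, i.e. the positive semidefiniteness of $M_\nu-D_rM_\nu D_r=\big(\!\big((m\wedge p)(1-r^{m+p})\widehat\nu(p-m)\big)\!\big)$, which I would prove with the Schur (Hadamard) product theorem. The Toeplitz matrix $\big(\!\big(\widehat\nu(p-m)\big)\!\big)$ is positive semidefinite because $\nu\geqslant0$, so it suffices to show $\big(\!\big((m\wedge p)(1-r^{m+p})\big)\!\big)\succeq0$. Writing $1-r^{m+p}=\int_r^1(m+p)t^{m+p-1}\,dt$ and extracting the diagonal factors $t^m,t^p$ as $D_t$, this reduces to the single statement $\big(\!\big((m\wedge p)(m+p)\big)\!\big)\succeq0$; and the identity $(m\wedge p)(m+p)=mp+(m\wedge p)^2$ settles it, since $\big(\!\big(mp\big)\!\big)=ww^{\tt T}$ with $w=(1,2,3,\dots)^{\tt T}$ is rank one, while $\big(\!\big(m\wedge p\big)\!\big)$ is positive semidefinite (being the Gram matrix of the vectors $(\mathbf 1_{l\leqslant m})_{l\geqslant1}$), so its Schur square $\big(\!\big((m\wedge p)^2\big)\!\big)$ is positive semidefinite as well.

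Assembling the pieces, the inequalities $D_rM_{\mu_i}D_r\preceq M_{\mu_i}$ and $D_r^2\preceq I$ combined with the tensor monotonicity give $D^{(2)}_{\boldsymbol\mu,j}(f_r)\leqslant D^{(2)}_{\boldsymbol\mu,j}(f)$ for $j=1,2,3$, and summing yields $D^{(2)}_{\boldsymbol\mu}(f_r)\leqslant D^{(2)}_{\boldsymbol\mu}(f)$. The main obstacle is exactly the matrix inequality $D_rM_\nu D_r\preceq M_\nu$; the rest is bookkeeping. The one technical point to watch is that the Schur product theorem is applied to the finite truncations coming from polynomials and then passed to the limit, which is legitimate since each form is a monotone limit of its finite sections; this is precisely the \emph{formal} positive definiteness of $M_\nu$ noted just before the lemma.
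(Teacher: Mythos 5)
Your reduction of the lemma to the single matrix inequality $D_rM_\nu D_r\preceq M_\nu$, and your proof of that inequality, are correct, and this core is genuinely different from the paper's: the paper obtains the same positivity statement $\big(\!\big((1-r^{m+p})M_{\mu_1}(m,p)\big)\!\big)\succeq 0$ by first deriving $(\sigma^*-I)^2M_{\mu_1}=0$ from the $2$-isometry identity and then citing Shimorin's Theorem 3.11, whereas you prove it from scratch via $1-r^{m+p}=\int_r^1(m+p)t^{m+p-1}\,dt$, the identity $(m\wedge p)(m+p)=mp+(m\wedge p)^2$, Gram-matrix arguments and the Schur product theorem. Your tensor-monotonicity assembly is, after unwinding, the same computation as the paper's splitting $1-r^{m+p+2n}=(1-r^{m+p})+r^{m+p}(1-r^{2n})$, only packaged more cleanly. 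Up to and including the polynomial case your argument is sound and more self-contained than the paper's.

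The gap is the passage from polynomials to general $f\in\mathcal D^2(\boldsymbol\mu)$. You justify it by asserting that ``each form is a monotone limit of its finite sections,'' and identify this with the formal positive definiteness of $M_\nu$. These are different things: formal positive definiteness says every finite section of $M_\nu$ is positive semidefinite; it does not make $k\mapsto\langle M_\nu P_k\boldsymbol a,P_k\boldsymbol a\rangle$ monotone, nor even convergent to the full form. Concretely, take $\nu=\delta_1$ (unit point mass at $1$), so $M_\nu(m,p)=m\wedge p$, and let $Q(g)=\sum_{m,p\geqslant 1}(m\wedge p)b_m\overline{b_p}$ be the associated form. For $f(z)=(1-z)^2$ one has $Q(f)=2$ but $Q(S_1f)=Q(1-2z)=4$, so the finite sections are not monotone and can exceed the full value. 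Worse, for $f=(1-z)h$ with $h=\sum_j j^{-1}z^{j^3}\in H^2$ one computes $Q(S_nf)=\sum_{k<n}|b_n-b_k|^2$ (where $h=\sum_k b_kz^k$), whence $Q(S_{j^3}f)\geqslant (j^3-j)/j^2\to\infty$ while $Q(f)=\|h\|_{H^2}^2<\infty$; setting $\mu_1=\delta_1$ and $F(z_1,z_2)=f(z_1)$ transplants this into $\mathcal D^2(\boldsymbol\mu)$, with $D^{(2)}_{\boldsymbol\mu,1}(S_kF)$ unbounded. So the bound $\limsup_k D^{(2)}_{\boldsymbol\mu,j}(S_kf)\leqslant D^{(2)}_{\boldsymbol\mu,j}(f)$, which your scheme needs, fails badly, and truncation cannot be used to close the argument. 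The paper avoids coefficient truncation entirely: it extends the polynomial case first to functions holomorphic in a neighborhood of $\overline{\mathbb D}^2$ (where partial sums do converge uniformly, with all derivatives, on the closed bidisc), and then handles general $f$ through the dilations $f_R\in\mathcal O(\overline{\mathbb D}^2)$, the commutation $(f_R)_r=(f_r)_R$, and the limit $R\to 1^-$. Alternatively, your own scheme can be repaired by running the matrix argument on the truncated-domain quantities $D^{(2)}_{\boldsymbol\mu,j}(\cdot,R)$ (whose coefficient series converge absolutely for $R<1$, the extra factor $R^{2(m\vee p)}=R^mR^{|m-p|}R^p$ being a positive semidefinite Schur multiplier) and then letting $R\to1^-$, since those quantities increase to $D^{(2)}_{\boldsymbol\mu,j}(\cdot)$; but some such device is required, and as written the limiting step is false.
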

\begin{proof}
    Let $f \in \mathcal{D}^{2}(\boldsymbol{\mu})$ and $0<r<1.$
   We shall prove the relations $D_{\boldsymbol{\mu}, i}^{(2)}(f_r) \leqslant D_{\boldsymbol{\mu}, i}^{(2)}(f)$, for $i=1,2,3$.  
    To this end, we claim that 
\begin{eqnarray}\label{first integral - contraction}
    D_{\boldsymbol{\mu}, 1}^{(2)}(f_r) \leqslant D_{\boldsymbol{\mu}, 1}^{(2)}(f).
\end{eqnarray}
    We will first prove \eqref{first integral - contraction} for polynomials and then for any given $f \in \mathcal{D}^{2}({\boldsymbol{\mu}})$. 
    Let $P(z_1,z_2)=\sum_{m,n=0}^k a_{m,n} z_1^m z_2^n$ be a polynomial of degree at most $2k.$
    Then, from \eqref{LHS-first-R-integral} with $R=1$, we have
\begin{align*}
    D^{(2)}_{\boldsymbol{\mu},1}(P) = 
    \frac{1}{2\pi} \sum_{n=0}^k \langle M_{\mu_1} \boldsymbol{a}_{\cdot n}, \boldsymbol{a}_{\cdot n}\rangle.
\end{align*} 
    Since $D^{(2)}_{\boldsymbol{\mu},1}(z_1^2 P) - 2D^{(2)}_{\boldsymbol{\mu},1}(z_1 P) +D^{(2)}_{\boldsymbol{\mu},1}(P)=0$, it then follows that $(\sigma^* -I)^2 M_{\mu_1}=0$. 
    Thus from \cite[Theorem 3.11]{Shimorin}, we get that the matrix $\big(\!\big(1-r^{m+p})M_{\mu_1}(m,p)\big)\!\big)_{m,p \geqslant 0}$ is positive semidefinite. Now we have
\begin{align*}
    \sum_{n=0}^{k} \langle \big((\boldsymbol{1}-\boldsymbol{r}^*\boldsymbol{r})\circ M_{\mu_1}\big) \boldsymbol{a}_{\cdot n}, \boldsymbol{a}_{\cdot n}\rangle \geqslant 0,
\end{align*} 
where $\boldsymbol{r}$ denotes the row vector $(1, r, r^2,\ldots, r^k),$ and  $\boldsymbol{1}$ denotes the rank one matrix with all its entries equal to $1.$
Here $X\circ Y$ denotes the Schur product of matrices $X$ and $Y.$
Therefore, it follows that
\begin{align*}
     D^{(2)}_{\boldsymbol{\mu},1}(P) - D^{(2)}_{\boldsymbol{\mu},1}(P_r) 
    & = \sum_{m,n,p=0}^{k} (1-r^{m+p+2n})M_{\mu_1}(m,p) a_{m,n} \overline{a_{p,n}} \\
    & = \sum_{n=0}^{k} \langle\big((\boldsymbol{1}-\boldsymbol{r}^*\boldsymbol{r})\circ M_{\mu_1}\big) \boldsymbol{a}_{\cdot n},\boldsymbol{a}_{\cdot n}\rangle  +
     \sum_{n=0}^{k} (1-r^{2n}) \langle\big(\boldsymbol{r}^*\boldsymbol{r}\circ M_{\mu_1}\big) \boldsymbol{a}_{\cdot n}, \boldsymbol{a}_{\cdot n}\rangle \\
     & \geqslant 0.
\end{align*}
    The last inequality follows since the matrix $(\!(r^{m+p})\!)_{m,p \geqslant 0}$ is positive definite. This proves the claim \eqref{first integral - contraction}. In a similar fashion, we can prove that $D^{(2)}_{\boldsymbol{\mu},2}(P_r) \leqslant D^{(2)}_{\boldsymbol{\mu},2}(P).$ 
    Note that from \eqref{third integral expression} with $R=1$, we have
\begin{align*}    D^{(2)}_{\boldsymbol{\mu},3}(P) = 
    \frac{1}{4\pi^2} \langle (M_{\mu_1}^{\tt T}\otimes M_{\mu_2}^{\tt T}) \boldsymbol{a}, \boldsymbol{a}\rangle,
\end{align*}
where $\boldsymbol{a}$ denotes the $2$-tensor $((a_{mn})).$ 
    As $(\sigma^* -I)^2 M_{\mu_1}=0$ and  $(\sigma^* -I)^2 M_{\mu_2}=0$, therefore $((\sigma^* -I)^2 M_{\mu_1}) \otimes M_{\mu_2}=0$ and $M_{\mu_1}\otimes((\sigma^* -I)^2 M_{\mu_2})=0$ hold true. 
    By \cite[Theorem 3.11]{Shimorin}, it turns out that for any $0\leqslant r < 1$ the matrices $\big(\!(\boldsymbol{1}-\boldsymbol{r}^*\boldsymbol{r})\circ M_{\mu_1})\!\big)\otimes M_{\mu_2}$
    and 
    $M_{\mu_1}\otimes \big(\!(\boldsymbol{1}-\boldsymbol{r}^*\boldsymbol{r})\circ M_{\mu_2})\!\big)$
    are positive semi-definite. 
    This implies that 
\begin{align}\label{Shimorin's result for matrix A}
    \sum_{m,n,p,q=0}^{k} (1-r^{m+p})M_{\mu_1}(m,p)M_{\mu_2}(n,q) a_{m,n} \overline{a_{p,q}} \geqslant 0 
\end{align} 
\begin{align}\label{Shimorin's result for matrix B}
\sum_{m,n,p,q=0}^{k} (1-r^{n+q})M_{\mu_1}(m,p)M_{\mu_2}(n,q) a_{m,n} \overline{a_{p,q}} \geqslant 0
\end{align}
Note that 
\begin{align*}
D^{(2)}_{\boldsymbol{\mu,3}}(P)-D^{(2)}_{\boldsymbol{\mu,3}}(P_r)&=\sum_{m,n,p,q=0}^{k} (1-r^{m+n+p+q}) M_{\mu_1}(m,p)M_{\mu_2}(n,q) a_{m,n} \overline{a_{p,q}} \nonumber \\ &=\sum_{m,n,p,q=0}^{k} (1-r^{m+p}+ r^{m+p}-r^{m+n+p+q}) M_{\mu_1}(m,p)M_{\mu_2}(n,q) a_{m,n} \overline{a_{p,q}}  \nonumber \\
    & = \sum_{m,n,p,q=0}^{k} (1-r^{m+p}) M_{\mu_1}(m,p)M_{\mu_2}(n,q) a_{m,n} \overline{a_{p,q}} ~+ \nonumber \\ & \quad \sum_{m,n,p,q=0}^{k}r^{m+p} (1-r^{n+q})M_{\mu_1}(m,p)M_{\mu_2}(n,q) a_{m,n} \overline{a_{p,q}} \nonumber \\
    &\geqslant 0.
\end{align*}
The last inequality follows from \eqref{Shimorin's result for matrix A}, \eqref{Shimorin's result for matrix B} and the fact that the matrix                  
$(\!(r^{m+p})\!)_{m,p=0}^{\infty}$ is positive       
    semi-definite. To see that the matrix    $(\!(r^{m+p})\!)_{m,p=0}^{\infty}$
is positive         
semi-definite, note that $(\!(r^{m+p})\!)=vv^T$, with $v$    
given by 
$v^T=\begin{bmatrix}
    1 & r & r^2 & r^3 & \cdots  
\end{bmatrix}.$ 
This proves the lemma if $f$ is a polynomial. Applying a standard uniform limit argument proves the lemma for any function $f$ which is holomorphic in a neighborhood of $\overline{\mathbb D}^2.$
    Now for $f \in \mathcal{D}^{2}(\boldsymbol{\mu})$ and $0<R<1$ we have $f_R \in \mathcal{O}(\overline{\mathbb D}^2).$ Therefore this readily implies that $D^{(2)}_{\boldsymbol{\mu}}(f_R)_r \leqslant D^{(2)}_{\boldsymbol{\mu}}(f_R),$ where $r$ being any number in $(0,1).$ Since $(f_R)_r = (f_r)_R,$ we get that $D^{(2)}_{\boldsymbol{\mu}}(f_r)_R \leqslant D^{(2)}_{\boldsymbol{\mu}}(f_R).$ This is true for any $0<R<1.$ Hence by taking limit $R \rightarrow 1,$ we get that $D^{(2)}_{\boldsymbol{\mu}}(f_r) \leqslant D^{(2)}_{\boldsymbol{\mu}}(f).$
\end{proof}
We skip the proof of the following theorem as it is standard and follows from \cite[Lemma 3.7]{Sameer et. al.} together with Fatou's theorem applied to $D_{\boldsymbol{\mu},3}^{(2)}(\cdot).$
\begin{lemma}\label{approximation by R-dilation}
    For any $f \in \mathcal{D}^{2}(\boldsymbol{\mu})$, $\|f_R-f\|_{\boldsymbol{\mu}} \rightarrow 0$ as $R \rightarrow 1^-.$
\end{lemma}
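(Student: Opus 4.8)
The plan is to split $\|f-f_R\|_{\boldsymbol{\mu}}^2$ into its four constituent pieces — the Hardy part $\|f-f_R\|^2_{H^2(\mathbb D^2)}$ together with $D^{(2)}_{\boldsymbol\mu,1}(f-f_R)$, $D^{(2)}_{\boldsymbol\mu,2}(f-f_R)$ and $D^{(2)}_{\boldsymbol\mu,3}(f-f_R)$ — and to show that each of the four tends to $0$ as $R\to 1^-$. The Hardy piece is immediate: writing $f=\sum_{m,n} a_{m,n} z_1^m z_2^n$, one has $\|f-f_R\|^2_{H^2(\mathbb D^2)}=\sum_{m,n}|a_{m,n}|^2(1-R^{m+n})^2$, which goes to $0$ by dominated convergence on the coefficient sequence, using that $f\in H^2(\mathbb D^2)$ by Lemma \ref{Dirichlet contained in Hardy}.

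The central mechanism for the three Dirichlet pieces is a \emph{weak convergence plus norm convergence implies strong convergence} argument, run in the natural $L^2$ space attached to each seminorm. I would treat $D^{(2)}_{\boldsymbol\mu,3}$ first, since it is the genuinely new ingredient. Here $D^{(2)}_{\boldsymbol\mu,3}(g)=\|\partial_1\partial_2 g\|^2_{L^2(\boldsymbol P_{\boldsymbol\mu}\,d\boldsymbol A)}$ is literally a weighted $L^2$-norm on $\mathbb D^2$ against the finite weight $\boldsymbol P_{\boldsymbol\mu}\,d\boldsymbol A$, and $\partial_1\partial_2 f_R(\boldsymbol z)=R^2(\partial_1\partial_2 f)(R\boldsymbol z)$. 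Three facts drive the conclusion: (i) $\partial_1\partial_2 f_R\to\partial_1\partial_2 f$ pointwise on $\mathbb D^2$ as $R\to1^-$, since $f$ is holomorphic; (ii) the proof of Lemma \ref{R-dilation is contraction} establishes $D^{(2)}_{\boldsymbol\mu,3}(f_R)\leqslant D^{(2)}_{\boldsymbol\mu,3}(f)$ for every $R$, and the semigroup identity $f_{R_1R_2}=(f_{R_1})_{R_2}$ makes $R\mapsto D^{(2)}_{\boldsymbol\mu,3}(f_R)$ non-decreasing and bounded; (iii) Fatou's lemma applied to $|\partial_1\partial_2 f_R|^2\boldsymbol P_{\boldsymbol\mu}$, combined with the uniform upper bound in (ii), forces $\lim_{R\to1^-}D^{(2)}_{\boldsymbol\mu,3}(f_R)=D^{(2)}_{\boldsymbol\mu,3}(f)$, i.e.\ norm convergence. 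Finally, (i) together with the uniform $L^2$-bound gives weak convergence $\partial_1\partial_2 f_R\rightharpoonup\partial_1\partial_2 f$ in $L^2(\boldsymbol P_{\boldsymbol\mu}\,d\boldsymbol A)$, and weak convergence plus norm convergence yields strong convergence via the identity $\|g_R-g\|^2=\|g_R\|^2-2\operatorname{Re}\langle g_R,g\rangle+\|g\|^2$, which is exactly $D^{(2)}_{\boldsymbol\mu,3}(f-f_R)\to0$.

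For $D^{(2)}_{\boldsymbol\mu,1}$ and $D^{(2)}_{\boldsymbol\mu,2}$ the same three-step scheme applies, with the contraction and monotonicity again coming from the proof of Lemma \ref{R-dilation is contraction}; the only difference is that these seminorms are defined through a boundary limit $\lim_{r\to1^-}$, so the relevant Hilbert space is the one carrying the quadratic forms $\tfrac{1}{2\pi}\sum_n\langle M_{\mu_1}\boldsymbol a_{\cdot n},\boldsymbol a_{\cdot n}\rangle$ and $\tfrac{1}{2\pi}\sum_m\langle M_{\mu_2}\boldsymbol a_{m\cdot},\boldsymbol a_{m\cdot}\rangle$ recorded in \eqref{LHS-first-R-integral} and \eqref{formula for second integral}. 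Alternatively, since these two terms constitute precisely the seminorm studied in \cite{Sameer et. al.}, the required convergence for them is exactly \cite[Lemma 3.7]{Sameer et. al.}. Adding the four estimates then completes the proof.

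The step I expect to be the main obstacle is the interchange of limits hidden in pieces $1$ and $2$: because $D^{(2)}_{\boldsymbol\mu,1}$ and $D^{(2)}_{\boldsymbol\mu,2}$ are themselves $r\to1^-$ boundary limits of integrals over $\mathbb D\times\mathbb T$ and $\mathbb T\times\mathbb D$, the clean pointwise-convergence setup used for the third piece is not directly available, and one must either pass to the coefficient/matrix description of the forms to run the weak-plus-norm argument, or simply invoke the cited result of \cite{Sameer et. al.}. By contrast, the third piece — the new contribution of this paper — is a transparent Fatou-plus-dominated-convergence argument carried out in an honest weighted $L^2$ space.
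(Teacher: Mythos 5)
Your proposal is correct and follows essentially the same route as the paper, which omits the details and simply notes that the lemma follows from \cite[Lemma 3.7]{Sameer et. al.} (covering the Hardy part and the seminorms $D^{(2)}_{\boldsymbol{\mu},1}$, $D^{(2)}_{\boldsymbol{\mu},2}$) together with Fatou's theorem applied to $D^{(2)}_{\boldsymbol{\mu},3}$. Your weak-plus-norm-convergence argument for the third term is a correct and welcome fleshing-out of what the paper compresses into the phrase ``Fatou's theorem applied to $D^{(2)}_{\boldsymbol{\mu},3}(\cdot)$,'' using the contraction property $D^{(2)}_{\boldsymbol{\mu},3}(f_R)\leqslant D^{(2)}_{\boldsymbol{\mu},3}(f)$ from Lemma \ref{R-dilation is contraction} exactly as intended.
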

In the following theorem, we prove that the set of all polynomials is dense in $\mathcal{D}^{2}(\boldsymbol{\mu}).$
\begin{theorem}
    The set of all polynomials is dense in $\mathcal{D}^{2}(\boldsymbol{\mu}).$ 
\end{theorem}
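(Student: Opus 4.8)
The plan is to combine the dilation approximation of Lemma \ref{approximation by R-dilation} with a uniform approximation of functions holomorphic on a neighborhood of the closed bidisc by their Taylor polynomials. Fix $f \in \mathcal{D}^{2}(\boldsymbol{\mu})$ and $\epsilon > 0$. By Lemma \ref{approximation by R-dilation} I would first choose $R \in (0,1)$ so that $\|f_R - f\|_{\boldsymbol{\mu}} < \epsilon/2$. Since $f$ is holomorphic on $\mathbb{D}^2$, the dilate $f_R(z_1,z_2) = f(Rz_1,Rz_2)$ is holomorphic on $(1/R)\mathbb{D}^2$, hence on a neighborhood of $\overline{\mathbb{D}}^2$; in particular $f_R$ and its derivatives $\partial_1 f_R,\ \partial_2 f_R,\ \partial_1\partial_2 f_R$ are continuous on $\overline{\mathbb{D}}^2$, and because the Taylor coefficients of $f_R$ decay geometrically, the partial sums $g_N$ of its Taylor series converge to $f_R$ uniformly on $\overline{\mathbb{D}}^2$ together with all of these derivatives. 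It then suffices to show $\|f_R - g_N\|_{\boldsymbol{\mu}} \to 0$ as $N \to \infty$, since a suitable $g_N$ yields $\|f - g_N\|_{\boldsymbol{\mu}} < \epsilon$.

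The heart of the matter is to dominate each piece of $\|f_R - g_N\|^2_{\boldsymbol{\mu}} = \|f_R - g_N\|^2_{H^2(\mathbb{D}^2)} + D^{(2)}_{\boldsymbol{\mu}}(f_R - g_N)$ by a uniform norm. The Hardy part is immediate, as $\|h\|_{H^2(\mathbb{D}^2)} \leqslant \|h\|_{\infty,\overline{\mathbb{D}}^2}$ for $h$ continuous on $\overline{\mathbb{D}}^2$. For the three Dirichlet seminorms \eqref{seminorm-first integral}, \eqref{seminorm-second integral}, \eqref{seminorm-third integral}, the key fact I would record is that each Poisson integral is area-integrable on the disc: a direct computation gives $\int_{\mathbb{D}} P_{\mu_i}(z)\,dA(z) = \mu_i(\mathbb{T})/(2\pi) < \infty$, because $\int_{\mathbb{D}} P(z,e^{is})\,dA(z) = 1$ for every $e^{is}\in\mathbb{T}$. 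Pulling the uniformly small integrand out of the integral then yields bounds of the form
\[
D^{(2)}_{\boldsymbol{\mu},1}(h) \leqslant \|\partial_1 h\|^2_{\infty,\overline{\mathbb{D}}^2}\,\textstyle\int_{\mathbb{D}} P_{\mu_1}\,dA, \qquad D^{(2)}_{\boldsymbol{\mu},3}(h) \leqslant \|\partial_1\partial_2 h\|^2_{\infty,\overline{\mathbb{D}}^2}\,\textstyle\iint_{\mathbb{D}^2}\boldsymbol{P}_{\boldsymbol{\mu}}\,d\boldsymbol{A},
\]
and analogously for $D^{(2)}_{\boldsymbol{\mu},2}$; the limit $r\to 1^-$ causes no trouble since the relevant derivatives of $h=f_R-g_N$ are continuous on $\overline{\mathbb{D}}^2$, so the integrands stay uniformly bounded for all $r\leqslant 1$. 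Applying these with $h=f_R-g_N$ and letting $N\to\infty$, the uniform convergence of the derivatives forces $D^{(2)}_{\boldsymbol{\mu}}(f_R-g_N)\to 0$, finishing the argument.

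The step I expect to need the most care is the justification of these seminorm bounds, namely the interchange of $\lim_{r\to 1^-}$ with $N\to\infty$ and the verification that $P_{\mu_i}$ is area-integrable; once $\int_{\mathbb{D}}P_{\mu_i}\,dA<\infty$ is established, the remaining uniform-convergence estimates are routine. As a fallback I would keep an alternative that sidesteps the integral estimates: use the closed-form expressions for the seminorms in terms of the formally positive-definite matrices $M_{\mu_i}$ (as in \eqref{LHS-first-R-integral}, \eqref{formula for second integral}, \eqref{third integral expression} with $R=1$). Since the Taylor coefficients of $f_R$ decay geometrically, the resulting series converge absolutely, and their tails—which are exactly $D^{(2)}_{\boldsymbol{\mu},i}(f_R-g_N)$—tend to zero by dominated convergence.
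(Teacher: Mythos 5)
Your proposal is correct and follows essentially the same route as the paper's proof: approximate $f$ by the dilate $f_R$ via Lemma \ref{approximation by R-dilation}, then approximate $f_R$ by its Taylor partial sums, which converge together with $\partial_1$, $\partial_2$, $\partial_1\partial_2$ uniformly on $\overline{\mathbb{D}}^2$ since $f_R$ is holomorphic on a neighborhood of $\overline{\mathbb{D}}^2$. The only difference is that you make explicit (via the correct observation $\int_{\mathbb{D}} P_{\mu_i}\,dA = \mu_i(\mathbb{T})/(2\pi)$) the domination of each seminorm by the sup norm of the corresponding derivative, a step the paper asserts without detail.
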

\begin{proof}
    Let $f \in \mathcal{D}^{2}(\boldsymbol{\mu})$ and $\epsilon >0$. From Lemma \ref{approximation by R-dilation}, it follows that there exists an $0<R<1$ such that 
    $\|f_R-f\|_{\boldsymbol{\mu}} < \epsilon/2.$ 
    Let $f_R(z_1,z_2) = \sum_{m,n=0}^\infty b_{m,n} z_1^m z_2^n,$ where the coefficient $b_{m,n},$ for $m,n\geqslant 0,$ is a function of $R$. 
    Consider the $k$-th partial sum $S_kf_R(z_1,z_2)=\sum_{m,n=0}^k b_{m,n} z_1^m z_2^n$ of $f_R(z_1,z_2)$. Note that $f_R(z_1,z_2)$ is holomorphic in any neighbourhood of $\overline{\mathbb D}^2$. Therefore the sequences $(S_k f_R)_{k=0}^{\infty}$, $({\partial_1}(S_kf_R))_{k=0}^{\infty}$, $({\partial_2}(S_kf_R))_{k=0}^{\infty},$ and $({\partial_1 \partial_2}(S_kf_R))_{k=0}^{\infty}$ will converge uniformly on $\overline{\mathbb D}^2$ respectively to $f_R$, ${\partial_1} (f_R)$, ${\partial_2}(f_R),$  and ${\partial_1 \partial_2}(f_R)$. 
    Thus it follows that there exists $k\in\mathbb N$ such that $\|S_kf_R-f_R\|^2_{\mathcal{D}^2(\boldsymbol{\mu})}\leqslant {\epsilon/2}.$ 
    We thus conclude that 
    \begin{align*}
       \|f- S_kf_R \|_{\boldsymbol{\mu}} \leqslant \|f-f_R\|_{\boldsymbol{\mu}}+\|S_kf_R-f_R\|_{\boldsymbol{\mu}}
        < \frac{\epsilon}{2}+ \frac{\epsilon}{2}=\epsilon.
    \end{align*} 
    This completes the proof of the theorem.
\end{proof}
    The results due to Lemma 2.13 and Theorem 2.16 yield the following theorem. 
\begin{theorem}
    The multiplication by the co-ordinate functions $M_{z_1}$ and $M_{z_2}$ are $2$-isometries on ${\mathcal{D}^{2}(\boldsymbol{\mu})}$, i.e., for every $f \in \mathcal{D}^{2}(\boldsymbol{\mu}),$ and $i=1,2,$
\begin{center}
    $\|z_i^2 f\|_{\boldsymbol{\mu}}^2 - 2 \|z_if\|_{\boldsymbol{\mu}}^2 + \|f\|_{\boldsymbol{\mu}}^2 = 0.$   
\end{center}
\end{theorem}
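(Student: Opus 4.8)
The plan is to upgrade the polynomial identity of Lemma 2.13 to all of $\mathcal{D}^{2}(\boldsymbol{\mu})$ by a density-and-continuity argument built on the density of polynomials from Theorem 2.16. Fix $i \in \{1,2\}$ and consider the functional
\[
Q_i(f) := \|z_i^2 f\|_{\boldsymbol{\mu}}^2 - 2\|z_i f\|_{\boldsymbol{\mu}}^2 + \|f\|_{\boldsymbol{\mu}}^2, \qquad f \in \mathcal{D}^{2}(\boldsymbol{\mu}).
\]
Lemma 2.13 asserts that $Q_i$ vanishes on the set of all polynomials, and Theorem 2.16 asserts that this set is dense. Thus, provided $Q_i$ is continuous on $\mathcal{D}^{2}(\boldsymbol{\mu})$, choosing polynomials $p_k \to f$ in $\|\cdot\|_{\boldsymbol{\mu}}$ will yield $Q_i(f) = \lim_{k} Q_i(p_k) = 0$, which is precisely the asserted $2$-isometry identity.

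Hence the first substantive step is to verify that $M_{z_i}$ is a \emph{bounded} operator on $\mathcal{D}^{2}(\boldsymbol{\mu})$. Theorem \ref{Gleason property at origin} already shows that $M_{z_i}$ maps $\mathcal{D}^{2}(\boldsymbol{\mu})$ into itself, and the seminorm estimates proved earlier make the norm bound explicit: Lemma \ref{first integral- difference between z_1f and f} together with Lemma \ref{intermidate estimate for first integral-difference z_1f and f} controls $D_{\boldsymbol{\mu},1}^{(2)}(z_i f)$ by $\|f\|_{\boldsymbol{\mu}}^2$, Lemma \ref{second integral- difference z_1f and f} gives the equality of the second seminorm, Lemma \ref{third integral estimate for z_1f} controls $D_{\boldsymbol{\mu},3}^{(2)}(z_i f)$, and $M_{z_i}$ acts isometrically on the Hardy part; summing these yields $\|z_i f\|_{\boldsymbol{\mu}}^2 \leqslant C \|f\|_{\boldsymbol{\mu}}^2$ for a constant $C = C(\boldsymbol{\mu})$. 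Alternatively, since $(\mathcal{D}^{2}(\boldsymbol{\mu}), \|\cdot\|_{\boldsymbol{\mu}})$ is a reproducing kernel Hilbert space by Theorem \ref{Dirichlet spaces are RKHS}, norm convergence forces pointwise convergence, so $M_{z_i}$ has closed graph and boundedness follows from the closed graph theorem. In particular $M_{z_i}^2 = M_{z_i} \circ M_{z_i}$ is bounded as well.

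With boundedness established, continuity of $Q_i$ is routine: each summand $f \mapsto \|M_{z_i}^k f\|_{\boldsymbol{\mu}}^2$ for $k = 0,1,2$ is the composition of the bounded linear map $M_{z_i}^k$ with the continuous squared norm, so $Q_i$ is continuous on $\mathcal{D}^{2}(\boldsymbol{\mu})$. The density-and-continuity argument of the first paragraph then closes the proof, simultaneously for $i=1$ and $i=2$.

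I expect the only genuine obstacle to be the boundedness of $M_{z_i}$, since everything afterward is a formal limiting argument. The subtle point is that Theorem \ref{Gleason property at origin} on its own only places $z_i f$ inside the space, whereas continuity of $Q_i$ truly requires the quantitative estimate $\|z_i f\|_{\boldsymbol{\mu}} \leqslant C \|f\|_{\boldsymbol{\mu}}$. Once that quantitative control is isolated from the earlier seminorm lemmas, no further computation with the integrals $D_{\boldsymbol{\mu},j}^{(2)}$ is needed.
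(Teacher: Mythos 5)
Your proposal is correct and takes essentially the same route as the paper, whose entire proof is the remark that Lemma 2.13 (the $2$-isometry identity for polynomials) and Theorem 2.16 (density of polynomials) yield the result. You merely make explicit the continuity step the paper leaves implicit, namely the boundedness of $M_{z_i}$, which indeed follows from the earlier seminorm estimates (or from the closed graph theorem in the reproducing kernel Hilbert space), exactly as you indicate.
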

Recall that for any $\nu\in\mathcal M_+(\mathbb T),$ one has 
\begin{align}\label{inner product in 1 variable}
    \langle z^m, z^n \rangle_{\mathcal{D}(\nu)}=
\begin{cases}
    \frac{m \wedge n}{2\pi} \widehat{\nu}(n-m) &\quad if m \neq n \\
    1+ \frac{m}{2\pi}  \widehat{\nu}(0) &\quad if m=n.
\end{cases}
\end{align}
In the following lemma, we get a formula for inner product of monomials in the space $\mathcal{D}^{2}(\boldsymbol{\mu})$, for any $\boldsymbol{\mu}\in\mathcal P\mathcal M_+(\mathbb T^2).$
\begin{lemma}\label{inner product- monomials - in term of single variable}
    For $m,n,p,q\in \mathbb Z_{\geqslant 0},$ $\langle z_1^m z_2^n, z_1^p z_2^q \rangle_{\boldsymbol{\mu}} = \langle z_1^m, z_1^p \rangle_{\mathcal{D}(\mu_1)} \langle z_2^n, z_2^q \rangle_{\mathcal{D}(\mu_2)}.$
\end{lemma}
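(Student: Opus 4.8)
The plan is to exploit the additive decomposition
$$\langle f,g\rangle_{\boldsymbol{\mu}} = \langle f,g\rangle_{H^2(\mathbb D^2)} + \langle f,g\rangle_{\boldsymbol{\mu},1} + \langle f,g\rangle_{\boldsymbol{\mu},2} + \langle f,g\rangle_{\boldsymbol{\mu},3}$$
and to evaluate each of the four sesquilinear forms on the pair of monomials $f=z_1^mz_2^n$ and $g=z_1^pz_2^q$. Since monomials are polynomials, the limiting formulas proved earlier may be used with $R=1$; each of them then collapses to a single surviving term, because the coefficient array of a monomial has exactly one nonzero entry.

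First I would record the four evaluations. The Hardy term gives $\langle z_1^mz_2^n, z_1^pz_2^q\rangle_{H^2(\mathbb D^2)} = \delta_{m,p}\delta_{n,q}$, the monomials being orthonormal. Polarizing \eqref{LHS-first-R-integral} and letting $R\to 1$ yields $\langle z_1^mz_2^n, z_1^pz_2^q\rangle_{\boldsymbol{\mu},1} = \frac{1}{2\pi}\,\delta_{n,q}\,(m\wedge p)\,\widehat{\mu_1}(p-m)$; the factor $\delta_{n,q}$ arises because the surviving term forces the $z_2$-exponents to agree, and $M_{\mu_1}(m,p)=(m\wedge p)\widehat{\mu_1}(p-m)$ vanishes automatically when $m=0$ or $p=0$, so $\partial_1$ annihilating a constant slice is built into the formula. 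Symmetrically, from \eqref{formula for second integral}, $\langle z_1^mz_2^n, z_1^pz_2^q\rangle_{\boldsymbol{\mu},2} = \frac{1}{2\pi}\,\delta_{m,p}\,(n\wedge q)\,\widehat{\mu_2}(q-n)$. Finally, polarizing \eqref{third integral expression} produces the genuinely two-variable product $\langle z_1^mz_2^n, z_1^pz_2^q\rangle_{\boldsymbol{\mu},3} = \frac{1}{4\pi^2}\,(m\wedge p)\widehat{\mu_1}(p-m)\,(n\wedge q)\widehat{\mu_2}(q-n)$, carrying no Kronecker delta since $\partial_1\partial_2$ couples both variables.

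The concluding step is to observe that the single-variable formula \eqref{inner product in 1 variable} may be written uniformly, for all $m,p\in\mathbb Z_{\geqslant 0}$, as $\langle z^m,z^p\rangle_{\mathcal D(\nu)} = \delta_{m,p} + \frac{1}{2\pi}(m\wedge p)\widehat{\nu}(p-m)$; indeed the diagonal case $m=p$ reproduces $1+\frac{m}{2\pi}\widehat\nu(0)$. Forming the product $\langle z_1^m,z_1^p\rangle_{\mathcal D(\mu_1)}\,\langle z_2^n,z_2^q\rangle_{\mathcal D(\mu_2)}$ and expanding the two binomials yields precisely four terms, which I would match one-to-one with the Hardy, first, second, and third contributions above. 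There is no substantive obstacle here beyond careful bookkeeping of the Kronecker deltas and of the normalizing constants $\frac{1}{2\pi}$ and $\frac{1}{4\pi^2}$; the only point requiring slight attention is verifying that the cross terms $\delta_{m,p}\cdot\frac{(n\wedge q)\widehat{\mu_2}(q-n)}{2\pi}$ and $\delta_{n,q}\cdot\frac{(m\wedge p)\widehat{\mu_1}(p-m)}{2\pi}$ of the expanded product align with the $\boldsymbol{\mu},2$ and $\boldsymbol{\mu},1$ terms respectively, after which the asserted identity is immediate.
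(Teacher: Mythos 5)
Your proposal is correct, and it reaches the conclusion by a genuinely more economical route than the paper. The paper proves this lemma by a five-case analysis (constant versus non-constant monomials, one-variable versus two-variable monomials) and, within each case, evaluates the semi-inner products $\langle\cdot,\cdot\rangle_{\boldsymbol{\mu},1}$, $\langle\cdot,\cdot\rangle_{\boldsymbol{\mu},2}$, $\langle\cdot,\cdot\rangle_{\boldsymbol{\mu},3}$ by direct integration against the Poisson kernel, using \eqref{inner product-monomials} each time; it then matches the results against \eqref{inner product in 1 variable} case by case. You instead polarize the already-established coefficient formulas \eqref{LHS-first-R-integral}, \eqref{formula for second integral} and \eqref{third integral expression} (legitimate here, since for monomials all sums are finite and the sesquilinear forms are recovered from the quadratic forms by the polarization identity), and you absorb all degenerate cases into the single observation that $M_{\nu}(j,k)=(j\wedge k)\widehat{\nu}(k-j)$ vanishes whenever $j=0$ or $k=0$. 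The second key simplification, which the paper does not exploit, is rewriting \eqref{inner product in 1 variable} uniformly as
\begin{equation*}
\langle z^m,z^p\rangle_{\mathcal D(\nu)}=\delta_{m,p}+\tfrac{1}{2\pi}(m\wedge p)\,\widehat{\nu}(p-m),
\end{equation*}
so that the asserted factorization becomes a four-term binomial expansion matched term by term with the Hardy, first, second and third contributions. What your approach buys is the elimination of the case analysis and of all repeated integral computations; what the paper's approach buys is self-containedness, in that it does not require first polarizing the quadratic-form identities of the earlier lemmas. Both arguments rest on the same structural facts, and your bookkeeping of the Kronecker deltas and of the constants $\frac{1}{2\pi}$, $\frac{1}{4\pi^2}$ is consistent with the formulas \eqref{first semi-inner product-both variables present}--\eqref{third semi-inner product-both variables present} that the paper obtains in its Case 5.
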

\begin{proof} 
Let $m,n,p,q\in\mathbb Z_{\geqslant 0},$ $f(z_1,z_2):=z_1^m z_2^n$ and  $g(z_1,z_2):=z_1^p z_2^q.$
We divide the proof of the lemma into the following five cases: 
\begin{itemize}
    \item[\textbf{Case 1:}] When either $f$ or $g$ is the constant function $1,$ i.e. $m=n=0$ or $p=q=0.$
    \item[\textbf{Case 2:}] When $f$ and $g$ both are function of $z_1$ only (similarly of $z_2$ only).
    \item[\textbf{Case 3:}] When $f$ is a function of $z_1$ only and $g$ is a function of $z_2$ only.
    \item[\textbf{Case 4:}] When $f$ is a function of $z_1$ only and $g$ is a function of both $z_1$ and $z_2$.
    \item[\textbf{Case 5:}] When $f$ and $g$ both are functions of both variables $z_1$ and $z_2$, i.e. $m,n,p,q\in\mathbb N.$ 
    \end{itemize}
    
    \textbf{Case 1}: Suppose $f$ is the constant function $1$, then $$\langle 1, z_1^pz_2^q\rangle_{\boldsymbol{\mu}}=\langle 1, z_1^pz_2^q\rangle_{H^2(\mathbb D^2)}= \langle 1, z_1^p\rangle_{H^2(\mathbb D)} \langle 1, z_2^q\rangle_{H^2(\mathbb D)}=\langle 1, z_1^p\rangle_{\mathcal D(\mu_1)}\langle 1, z_2^q\rangle_{\mathcal D(\mu_2)}.$$
    
    \textbf{Case 2}: 
From \eqref{inner product-monomials}, 
we get that
    \begin{align*}
    \langle z_1^m, z_1^p\rangle_{\mathcal{D}^{2}_{\boldsymbol{\mu},1}}  &=   
    \lim _{r \rightarrow 1^{-}} \frac{1}{2 \pi} \iint_{\mathbb{D} \times \mathbb T} m z_1^{m-1}  p \bar{{z}_1}^{p-1} P_{\mu_1}(z_1) dA(z_1) dt  \\
    &= \lim _{r \rightarrow 1^{-}} \frac{1}{2 \pi} \frac{1}{\pi} \int_0^1 \int_0^{2\pi} \int_0^{2 \pi} m p~ r_1^{m+p-1}r_1^{|m - p|}e^{i(m - p)\theta_1}dr_1 dt d\mu_1 (\theta_1) \\ 
    &= \lim _{r \rightarrow 1^{-}} \frac{1}{2 \pi} \frac{1}{\pi}  mp \int_0^1  r_1^{m+p-1} r_1^{|m - p|} dr_1 \int_0^{2\pi} dt  \int_0^{2\pi} e^{i(m - p)\theta_1} d\mu_1(\theta _1) \\
    & =\frac{1}{2 \pi} (m \wedge p) \widehat{\mu}_1\left(p-m\right)
    \end{align*} 
    Whereas, note that $\langle z_1^m,z_1^p\rangle_{H^2\left(\mathbb{D}^2\right)}= \delta_{mp}$ and $ 0=\langle z_1^m, z_1^p\rangle_{\mathcal{D}^{2}_{\boldsymbol{\mu},2}}=\langle z_1^m, z_1^p\rangle_{\mathcal{D}^{2}_{\boldsymbol{\mu},3}}.$
    Therefore, we get 
    \[\langle z_1^m , z_1^p \rangle_{\boldsymbol{\mu}} =\delta_{mp}+\frac{1}{2 \pi} (m \wedge p) \widehat{\mu}_1\left(p-m\right).\]
    Now, using \eqref{inner product in 1 variable} completes the proof in this case.

\textbf{Case 3:} 
    Using the definitions,
    it follows that 
    $$\langle z_1^m,z_2^q\rangle_{H^2\left(\mathbb{D}^2\right)}=\langle z_1^m,z_2^q\rangle_{\mathcal{D}^{2}_{\boldsymbol{\mu},1}}=0=\langle z_1^m, z_2^q\rangle_{\mathcal{D}^{2}_{\boldsymbol{\mu},2}}=\langle z_1^m, z_2^q\rangle_{\mathcal{D}^{2}_{\boldsymbol{\mu},3}}.$$ 
    Thus, $z_1^m$ and $z_2^q$ are orthogonal in $\mathcal{D}^{(2)}(\boldsymbol{\mu})$. For any $m,q\in\mathbb N,$ it is trivial to verify that $\langle z_1^m, 1 \rangle_{\mathcal{D}(\mu_1)} \langle 1, z_2^q \rangle_{\mathcal{D}(\mu_2)}=0.$ 

    \textbf{Case 4:} 
From \eqref{inner product-monomials}, note that
\begin{align*}    
   \langle z_1^m, z_1^p z_2^q \rangle_{\mathcal{D}^{(2)}_{\boldsymbol{\mu},1}} 
     &=  \lim _{r \rightarrow 1^{-}} \frac{1}{2 \pi} \iint_{\mathbb{D} \times \mathbb T} m z_1^{m-1}  p \bar{{z}_1}^{p-1} \bar{z_2}^q P_{\mu_1}(z_1) dA(z_1) dt \\
     &= \lim _{r \rightarrow 1^{-}} \frac{1}{2 \pi} \frac{1}{\pi} mp \int_0^1 \int_0^{2\pi} \int_0^{2\pi} r_1^{m+p-1} r_1^{|m-p|}e^{i(m-p)\theta_1} r^{q} e^{-itq}  d\mu_1(\theta_1) dt dr_1.
\end{align*}
    Since $q\geqslant 1,$ it easily follows that $\int_0^{2\pi}e^{-itq} dt=0.$ Therefore, we get that $\langle z_1^m, z_1^p z_2^q \rangle_{\mathcal{D}^{(2)}_{\boldsymbol{\mu},1}}=0.$ On the other hand, using definitions, note that $$\langle z_1^m, z_1^p z_2^q\rangle_{H^2\left(\mathbb{D}^2\right)}=0=\langle z_1^m, z_1^p z_2^q\rangle_{\mathcal{D}^{(2)}_{\boldsymbol{\mu},2}}=\langle z_1^m, z_1^p z_2^q\rangle_{\mathcal{D}^{(2)}_{\boldsymbol{\mu},3}}.$$
    Using \eqref{inner product in 1 variable}, we observe that $\langle 1, z_2^q\rangle_{\mathcal D(\mu_2)}=0.$ This establishes the equality in the lemma in this case.

     \textbf{Case 5:} 
Once again from \eqref{inner product-monomials}, we get that
\begin{align*}
     \langle z_1^m z_2^n, z_1^p z_2^q \rangle_{\mathcal{D}^{(2)}_{\boldsymbol{\mu},1}} 
    &= \lim _{r \rightarrow 1^{-}} \frac{1}{2 \pi} \iint_{\mathbb{D} \times \mathbb T} mz_1^{m-1} z_2^n p \overline{z_1}^{p-1} \overline{z_2}^q P_{\mu_1}(z_1)  dt dA(z_1) \\
    &= \lim _{r \rightarrow 1^{-}} \frac{1}{2 \pi} \frac{1}{\pi} mp \int_0^1 \int_0^{2\pi} \int_0^{2\pi} r_1^{m+p-1} r^{n+q} e^{i(n-q)t}r_1^{|m-p|} e^{i(m-p)\theta_1}  d\mu_1(\theta_1) dt dr_1\\
    &= \delta_{nq} \lim _{r \rightarrow 1^{-}} \frac{1}{2 \pi^2}  mp \int_0^1 \int_0^{2\pi} r_1^{m+p-1} r^{n+q} r_1^{|m-p|} e^{i(m-p)\theta_1}  d\mu_1(\theta_1) dr_1.
\end{align*}
Thus
\begin{eqnarray}\label{first semi-inner product-both variables present}
    \langle z_1^m z_2^n, z_1^p z_2^q \rangle_{\mathcal{D}^{(2)}_{\boldsymbol{\mu},1}}=\begin{cases}
        0 & \mbox{ if }n\neq q\\
        \frac{1}{2\pi} (m \wedge p) \widehat{\mu_1}(p-m) & \mbox{ if } n=q. 
    \end{cases}
\end{eqnarray}
Similarly, we can derive, 
 \begin{eqnarray}\label{second semi-inner product-both variables present}
    \langle z_1^m z_2^n, z_1^p z_2^q \rangle_{\mathcal{D}^{(2)}_{\boldsymbol{\mu},2}}=\begin{cases}
        0 & \mbox{ if }m\neq p\\
        \frac{1}{2\pi} (n \wedge q) \widehat{\mu_2}(q-n) & \mbox{ if } m=p. 
    \end{cases}
\end{eqnarray}
In the following, we use \eqref{inner product-monomials} twice to obtain that 
\begin{eqnarray}\label{third semi-inner product-both variables present}
    \langle z_1^m z_2^n, z_1^p z_2^q \rangle_{\mathcal{D}^{(2)}_{\boldsymbol{\mu},3}}
    &=& \iint_{\mathbb D^2} mn z_1^{m-1} z_2^{n-1}~ pq \overline{z_1}^{p-1} \overline{z_2}^{q-1} P_{\boldsymbol{\mu}}(z_1,z_2) dA(z_1) dA(z_2) \nonumber \\  
    &=&  \frac{1}{\pi^2} mnpq \frac{1}{2( m \vee p)} \frac{1}{2(n \vee q)} \widehat\mu_1(p-m) \widehat \mu_2(q-n)\nonumber \\
    &=& \frac{1}{4\pi^2} (m \wedge p)  (n \wedge q) \widehat\mu_1(p-m) \widehat \mu_2(q-n).
\end{eqnarray}
Using \eqref{inner product in 1 variable}, \eqref{first semi-inner product-both variables present}, \eqref{second semi-inner product-both variables present}, and \eqref{third semi-inner product-both variables present}, we get the result in this case.
\end{proof} 

We note down the case when $m=p$ and $n=q$ of above lemma into the following corollary.
\begin{cor}   
    For any $m,n\in\mathbb Z_{\geqslant 0},$ 
    $\|z_1^{m} z_2^{n}\|_{\boldsymbol{\mu}} = \|z_1^m\|_{\boldsymbol{\mu}} \|z_2^n\|_{\boldsymbol{\mu}}.$
\end{cor}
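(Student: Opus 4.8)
The plan is to read the corollary off directly from Lemma~\ref{inner product- monomials - in term of single variable} by specializing its two index pairs. First I would set $p=m$ and $q=n$ in that lemma, which immediately gives
\[
\|z_1^m z_2^n\|_{\boldsymbol{\mu}}^2 = \langle z_1^m z_2^n, z_1^m z_2^n\rangle_{\boldsymbol{\mu}} = \langle z_1^m, z_1^m\rangle_{\mathcal{D}(\mu_1)}\,\langle z_2^n, z_2^n\rangle_{\mathcal{D}(\mu_2)} = \|z_1^m\|_{\mathcal{D}(\mu_1)}^2\,\|z_2^n\|_{\mathcal{D}(\mu_2)}^2.
\]
Thus the squared $\boldsymbol{\mu}$-norm of the monomial already factors as a product of single-variable Dirichlet norms, and it remains only to rewrite each single-variable norm as the corresponding $\boldsymbol{\mu}$-norm of the one-variable monomial viewed as a function on the bidisc.

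For the second step I would invoke Lemma~\ref{inner product- monomials - in term of single variable} once more, this time with the second index pair set to zero: taking $n=q=0$ and $p=m$ yields
\[
\|z_1^m\|_{\boldsymbol{\mu}}^2 = \langle z_1^m, z_1^m\rangle_{\mathcal{D}(\mu_1)}\,\langle 1, 1\rangle_{\mathcal{D}(\mu_2)}.
\]
Here I would use the diagonal case of \eqref{inner product in 1 variable} with $m=0$ to note that $\langle 1,1\rangle_{\mathcal{D}(\mu_2)} = 1$, so that $\|z_1^m\|_{\boldsymbol{\mu}} = \|z_1^m\|_{\mathcal{D}(\mu_1)}$; by the symmetric argument $\|z_2^n\|_{\boldsymbol{\mu}} = \|z_2^n\|_{\mathcal{D}(\mu_2)}$. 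Substituting these two identities into the factorization from the first step and taking square roots delivers $\|z_1^m z_2^n\|_{\boldsymbol{\mu}} = \|z_1^m\|_{\boldsymbol{\mu}}\,\|z_2^n\|_{\boldsymbol{\mu}}$, as required.

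There is essentially no genuine obstacle, since the corollary is a pure specialization of the preceding lemma. The only point demanding a moment's care is the bookkeeping needed to pass from the single-variable $\mathcal{D}(\mu_i)$-norms produced by the lemma to the bidisc $\boldsymbol{\mu}$-norms appearing in the statement; this is exactly what the normalization $\langle 1,1\rangle_{\mathcal{D}(\mu_i)}=1$ supplies, and I would flag it explicitly so the two sides of the claimed identity are unambiguously referring to the same norm.
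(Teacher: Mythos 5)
Your proposal is correct and matches the paper's approach: the paper obtains this corollary precisely by specializing Lemma~\ref{inner product- monomials - in term of single variable} to $p=m$, $q=n$. You are in fact slightly more careful than the paper, since you explicitly justify the identification $\|z_1^m\|_{\boldsymbol{\mu}}=\|z_1^m\|_{\mathcal{D}(\mu_1)}$ (and its analogue for $z_2^n$) via the normalization $\langle 1,1\rangle_{\mathcal{D}(\mu_i)}=1$, a step the paper leaves implicit.
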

Lemma \ref{inner product- monomials - in term of single variable} suggests that there should be a relationship between $\mathcal D^{2}(\boldsymbol{\mu})$ and $\mathcal D(\mu_1)\otimes \mathcal D(\mu_2).$ This is indeed the case, as shown in the following theorem.
\begin{theorem}\label{tensor product}
    For any $\mu_1,\mu_2\in\mathcal M_+(\mathbb T),$ the pair of commuting $2$-isometries $(M_{z_1},M_{z_2})$ on $\mathcal D^{2}(\boldsymbol{\mu})$ is unitarily equivalent to the pair $(M_z\otimes I, I\otimes M_z)$ on $(\mathcal D(\mu_1)\otimes \mathcal D(\mu_2)) \oplus (\mathcal D(\mu_1)\otimes \mathcal D(\mu_2)).$ In particular, the pair $(M_{z_1}, M_{z_2})$ is doubly commuting.
\end{theorem}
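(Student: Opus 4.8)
The plan is to convert the factorization of monomial inner products recorded in Lemma~\ref{inner product- monomials - in term of single variable} into an explicit unitary, and then to check that it conjugates the two coordinate multiplications into the factor shifts. I would define a linear map $U$ on the dense polynomial subspace of $\mathcal{D}^2(\boldsymbol{\mu})$ by
\[
U\big(z_1^m z_2^n\big) := z^m \otimes z^n, \qquad m,n \in \mathbb{Z}_{\geqslant 0},
\]
extended linearly. Lemma~\ref{inner product- monomials - in term of single variable} gives, for all $m,n,p,q \in \mathbb{Z}_{\geqslant 0}$,
\[
\langle z_1^m z_2^n,\, z_1^p z_2^q \rangle_{\boldsymbol{\mu}} = \langle z^m, z^p \rangle_{\mathcal{D}(\mu_1)}\,\langle z^n, z^q \rangle_{\mathcal{D}(\mu_2)} = \langle z^m\otimes z^n,\, z^p\otimes z^q \rangle_{\mathcal{D}(\mu_1)\otimes\mathcal{D}(\mu_2)},
\]
so $U$ is inner-product preserving on polynomials. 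Since polynomials are dense in $\mathcal{D}^2(\boldsymbol{\mu})$ (established above) and the finite spans of the pure tensors $z^m\otimes z^n$ are dense in $\mathcal{D}(\mu_1)\otimes\mathcal{D}(\mu_2)$, the map $U$ extends uniquely to a surjective isometry, that is, a unitary onto the tensor-product space.

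With $U$ at hand, I would verify the intertwining relations on the monomial basis and propagate them by continuity. From $M_{z_1}(z_1^m z_2^n)=z_1^{m+1}z_2^n$ one gets $U M_{z_1}(z_1^m z_2^n)=z^{m+1}\otimes z^n=(M_z\otimes I)\,U(z_1^m z_2^n)$, hence $U M_{z_1}=(M_z\otimes I)\,U$ on polynomials; the symmetric computation yields $U M_{z_2}=(I\otimes M_z)\,U$. Since all the operators in sight are bounded and polynomials are dense, both identities extend to all of $\mathcal{D}^2(\boldsymbol{\mu})$, so $U$ implements the asserted unitary equivalence of $(M_{z_1},M_{z_2})$ with $(M_z\otimes I,\,I\otimes M_z)$.

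The concluding double-commutativity assertion is then read off from the model. On the tensor product the two factor shifts commute, and in addition
\[
(M_z\otimes I)^*(I\otimes M_z)=M_z^*\otimes M_z=(I\otimes M_z)(M_z\otimes I)^*,
\]
so the model pair is doubly commuting; transporting these relations back through the unitary $U$ gives $M_{z_1}M_{z_2}=M_{z_2}M_{z_1}$ together with $M_{z_1}^*M_{z_2}=M_{z_2}M_{z_1}^*$, which is precisely the claim. I expect the single genuinely nontrivial step to be the well-definedness and unitarity of $U$, which is exactly where the Gram-matrix factorization of Lemma~\ref{inner product- monomials - in term of single variable} has to be combined with the polynomial-density theorem; granting that, the intertwining relations and the double commutativity are immediate one-line verifications on the monomial basis.
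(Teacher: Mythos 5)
Your proposal is correct and follows essentially the same route as the paper's proof: the same unitary $U(z_1^m z_2^n)=z^m\otimes z^n$, unitarity via Lemma \ref{inner product- monomials - in term of single variable} together with density of polynomials and of finite spans of pure tensors, the same monomial-level intertwining computations, and double commutativity read off from the tensor model. Your explicit verification $(M_z\otimes I)^*(I\otimes M_z)=M_z^*\otimes M_z=(I\otimes M_z)(M_z\otimes I)^*$ merely spells out a step the paper leaves implicit.
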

\begin{proof}
Define $U:\mathcal D^{2}(\boldsymbol{\mu}) \to \mathcal D(\mu_1)\otimes \mathcal D(\mu_2)$ by the rule $U(z_1^mz_2^n):= z^m\otimes z^n,$ for $m,n\in\mathbb Z_{\geqslant 0},$ and extend $U$ linearly. Since the set of polynomials are dense in $\mathcal D^{2}(\boldsymbol{\mu}),$ $U$ extends to whole space $\mathcal D^{2}(\boldsymbol{\mu})$ continuously. 
Using Lemma \ref{inner product- monomials - in term of single variable}, it follows that $U$ is an isometry. 
On the other hand, since $U$ has closed range and $\mathrm{span}\{z^m\otimes z^n:m,n\in\mathbb Z_{\geqslant 0}\}$ is dense in $\mathcal D(\mu_1)\otimes \mathcal D(\mu_2),$ it follows that $U$ is a unitary map. 
Also note that 
$$U M_{z_1}(z_1^mz_2^n)=z^{m+1}\otimes z^n=(M_z\otimes I)U(z_1^m z_2^n)$$ 
and 
$$U M_{z_2}(z_1^mz_2^n)=z^{m}\otimes z^{n+1}=(I\otimes M_z)U(z_1^m z_2^n).$$ 
    This proves that the pair $(M_{z_1},M_{z_2})$ is unitarily equivalent to the pair $(M_z\otimes I, I\otimes M_z).$ This, in particular, implies that $M_{z_1}$ and $M_{z_2}$ on $\mathcal D^{2}(\boldsymbol{\mu})$ is doubly commuting. 
\end{proof}

For the pair of commuting $2$-isometries $(M_{z_1},M_{z_2})$ on $\mathcal{D}^2(\boldsymbol{\mu})$, denote the closed subspace $(\mathcal{D}^2(\boldsymbol{\mu}) \ominus z_1\mathcal{D}^2(\boldsymbol{\mu})) \cap (\mathcal{D}^2(\boldsymbol{\mu}) \ominus z_2 D^2(\boldsymbol{\mu}))$ by $\mathcal{W}$. The subspace $\mathcal W$ plays a major role in this article. With a simple computation, it is also easy to see that $\mathcal{W} = \ker M_{z_1}^* \cap \ker M_{z_2}^*.$ 
Below, we note down some of the important properties of the subspace $\mathcal{W}$.
\begin{rem}
    The proof of Theorem \ref{tensor product} tells us that the kernel function $K$ for $\mathcal D^2(\boldsymbol{\mu})$ is given by 
    \begin{eqnarray*}
        K((z_1,z_2),(w_1,w_2))=K^{\mu_1}(z_1,w_1)K^{\mu_2}(z_2,w_2), \quad (z_1,z_2),(w_1,w_2)\in\mathbb D^2,
    \end{eqnarray*}
    where $K^{\mu_j}$ denotes the kernel function for $\mathcal D(\mu_j),$ $j=1,2.$
\end{rem}
As corollaries of Theorem \ref{tensor product}, we get the following results.
\begin{cor}
    The closed subspace $\mathcal{W}$ is a wandering subspace of $(M_{z_1},M_{z_2})$ on $\mathcal{D}^2(\boldsymbol{\mu})$ and the dimension of $\mathcal{W}$ is $1$.
\end{cor}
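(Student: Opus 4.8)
The plan is to push the entire question through the unitary equivalence recorded in Theorem~\ref{tensor product}. Writing $U\colon \mathcal{D}^2(\boldsymbol{\mu}) \to \mathcal{D}(\mu_1)\otimes\mathcal{D}(\mu_2)$ for the unitary of that theorem, we have $UM_{z_1}U^* = M_z\otimes I$ and $UM_{z_2}U^* = I\otimes M_z$, so that $U\mathcal{W}$ is precisely $\ker(M_z^*\otimes I)\cap\ker(I\otimes M_z^*)$. First I would record the two factor kernels, namely $\ker(M_z^*\otimes I) = \ker M_z^* \otimes \mathcal{D}(\mu_2)$ and $\ker(I\otimes M_z^*) = \mathcal{D}(\mu_1)\otimes\ker M_z^*$. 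Invoking the standard tensor identity $(\mathcal{A}\otimes\mathcal{K})\cap(\mathcal{H}\otimes\mathcal{B}) = \mathcal{A}\otimes\mathcal{B}$ for closed subspaces $\mathcal{A}\subseteq\mathcal{H}$, $\mathcal{B}\subseteq\mathcal{K}$ (immediate from extending orthonormal bases of $\mathcal{A}$ and $\mathcal{B}$) then gives $U\mathcal{W} = \ker M_z^* \otimes \ker M_z^*$, reducing everything to the one-variable situation.

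Next I would compute $\ker M_z^*$ inside the single-variable Dirichlet-type space $\mathcal{D}(\mu_j)$. From the inner product formula \eqref{inner product in 1 variable} one reads off $\langle 1, z^k\rangle_{\mathcal{D}(\mu_j)} = 0$ for every $k\geqslant 1$, so the constant function $1$ is orthogonal to $z\mathcal{D}(\mu_j) = \overline{\mathrm{span}}\{z^k : k\geqslant 1\}$; hence $1\in\ker M_z^*$. That $\ker M_z^*$ is in fact exactly one-dimensional is the content of Richter's description of $\mathcal{D}(\mu_j)$, which is a cyclic analytic $2$-isometry with cyclic vector $1$. Granting this, $\dim\ker M_z^* = 1$ in each factor, and therefore $\dim\mathcal{W} = \dim U\mathcal{W} = 1\cdot 1 = 1$.

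Finally, for the wandering property I would use the concrete form $U\mathcal{W} = \mathbb{C}\,(1\otimes 1)$. Since $U(M_{z_1}^{k_1}M_{z_2}^{k_2}w) = (M_z^{k_1}\otimes M_z^{k_2})Uw$, it suffices to check that $1\otimes 1$ is orthogonal to $z^{k_1}\otimes z^{k_2}$ whenever $(k_1,k_2)\neq(0,0)$. By the product structure of the inner product (equivalently Lemma~\ref{inner product- monomials - in term of single variable}) this pairing factors as $\langle 1, z^{k_1}\rangle_{\mathcal{D}(\mu_1)}\,\langle 1, z^{k_2}\rangle_{\mathcal{D}(\mu_2)}$, and since at least one of $k_1,k_2$ is positive, the corresponding factor vanishes by the computation above. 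Hence $\mathcal{W}\perp M_{z_1}^{k_1}M_{z_2}^{k_2}\mathcal{W}$ for all nonzero $(k_1,k_2)$, which is exactly the assertion that $\mathcal{W}$ is a wandering subspace.

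The only step that is not purely formal is the claim $\dim\ker M_z^* = 1$ in the single-variable space; everything else is bookkeeping inside the tensor model. I expect this to be the main (and essentially classical) point: the orthogonality $1\perp z\mathcal{D}(\mu_j)$ is immediate from \eqref{inner product in 1 variable}, whereas the reverse containment relies on the fact that $1$ generates $\mathcal{D}(\mu_j)$ as an analytic $2$-isometry, which is where Richter's theory enters.
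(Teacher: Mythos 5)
Your proof is correct and follows exactly the route the paper intends: the paper states this corollary as an immediate consequence of Theorem~\ref{tensor product}, and your argument is precisely that reduction, transferring $\mathcal{W}$ through the unitary $U$ to $\ker M_z^*\otimes\ker M_z^*=\mathbb{C}(1\otimes 1)$ and checking orthogonality via the product form of the inner product. The supporting one-variable facts you invoke (density of polynomials in $\mathcal{D}(\mu_j)$ and $\langle 1,z^k\rangle_{\mathcal{D}(\mu_j)}=0$ for $k\geqslant 1$, hence $\ker M_z^*=\mathbb{C}1$) are exactly the ingredients from Richter's theory and \eqref{inner product in 1 variable} that the paper relies on.
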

\begin{cor}
    The subspace $\ker M_{z_1}^* \subseteq \mathcal{D}^2(\boldsymbol{\mu})$ is a $M_{z_2}$-reducing subspace. Also, the subspace $\ker M_{z_2}^* \subseteq \mathcal{D}^2(\boldsymbol{\mu})$ is a $M_{z_1}$-reducing subspace. 
\end{cor}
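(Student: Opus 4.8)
The plan is to read this off directly from the doubly commuting property established in Theorem~\ref{tensor product}, since a closed subspace reduces an operator precisely when it is invariant under both the operator and its adjoint. I would verify that $\ker M_{z_1}^*$ is invariant under $M_{z_2}$ and under $M_{z_2}^*$; the statement for $\ker M_{z_2}^*$ then follows verbatim by interchanging the roles of the two coordinates.

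First I would extract from Theorem~\ref{tensor product} the two identities I need: the doubly commuting relation $M_{z_1}^* M_{z_2} = M_{z_2} M_{z_1}^*$, and, from commutativity of $M_{z_1}$ and $M_{z_2}$, the adjoint relation $M_{z_1}^* M_{z_2}^* = M_{z_2}^* M_{z_1}^*$. Now I fix $h \in \ker M_{z_1}^*$, so $M_{z_1}^* h = 0$. For invariance under $M_{z_2}$, I compute $M_{z_1}^*(M_{z_2} h) = M_{z_2}(M_{z_1}^* h) = 0$, so $M_{z_2} h \in \ker M_{z_1}^*$. For invariance under $M_{z_2}^*$, I compute $M_{z_1}^*(M_{z_2}^* h) = M_{z_2}^*(M_{z_1}^* h) = 0$, so $M_{z_2}^* h \in \ker M_{z_1}^*$. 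Together these show that $\ker M_{z_1}^*$ reduces $M_{z_2}$.

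Alternatively, one may read the claim off the tensor decomposition itself: under the unitary $U$ of Theorem~\ref{tensor product}, the subspace $\ker M_{z_1}^*$ is carried to $\ker M_z^* \otimes \mathcal{D}(\mu_2)$, and the operator $I \otimes M_z$ corresponding to $M_{z_2}$ plainly maps this subspace into itself, as does its adjoint $I \otimes M_z^*$. Either route gives the result with no real computation.

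The only point worth flagging, and the reason this is genuinely a consequence of the stronger conclusion of Theorem~\ref{tensor product} rather than of mere commutativity, is the step treating $M_{z_1}^* M_{z_2}$: commutativity of $M_{z_1}$ and $M_{z_2}$ by itself says nothing about how $M_{z_1}^*$ interacts with $M_{z_2}$, whereas the doubly commuting identity is exactly what relates them. Past that observation the argument is entirely routine.
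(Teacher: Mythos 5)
Your proof is correct and is exactly the route the paper intends: the corollary is stated as an immediate consequence of Theorem~\ref{tensor product}, whose doubly commuting conclusion $M_{z_1}^*M_{z_2}=M_{z_2}M_{z_1}^*$ (together with commutativity of the adjoints) is precisely what your kernel computation uses. Your alternative reading via the unitary $U$ carrying $\ker M_{z_1}^*$ to $\ker M_z^*\otimes\mathcal{D}(\mu_2)$ is the same theorem viewed structurally, so there is nothing to add.
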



Before we proceed further,  for any $f\in \mathcal{D}^2(\boldsymbol{\mu})$ and $z_1, z_2\in\mathbb D,$ we define the operators $L_1$ and $L_2$ by the rules 
\begin{eqnarray}\label{left inverses}
  (L_1f)(z_1,z_2) := \frac{f(z_1,z_2) - f(0,z_2)}{z_1} \quad \text{and} \quad (L_2f)(z_1,z_2) := \frac{f(z_1,z_2)-f(z_1,0)}{z_2}.  
\end{eqnarray}
Using the definition of norm, Lemma \ref{semi-norm of slice functions}, and Theorem \ref{Gleason property at origin}, it turns out that $L_1$ and $L_2$ are indeed in $\mathcal{B}(\mathcal{D}^2(\boldsymbol{\mu}))$. Here, we enlist some of the key properties of these operators. 
\begin{theorem}\label{L_1 commutes with M_{z_2}}
    The operators $L_1$ and $L_2$ are the left inverses of $M_{z_1}$ and $M_{z_2}$ respectively. Moreover, $L_1 M_{z_2} = M_{z_2}L_1$ and $L_2 M_{z_1}= M_{z_1}L_2$.
\end{theorem}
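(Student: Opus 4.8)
The plan is to verify both assertions by direct computation at the level of holomorphic functions, since the discussion preceding the theorem---via Lemma~\ref{semi-norm of slice functions} and Theorem~\ref{Gleason property at origin}---already guarantees that $L_1,L_2\in\mathcal B(\mathcal D^2(\boldsymbol\mu))$, so that no further analytic estimates are required. The observation that drives everything is that once an identity involving $L_i$ and $M_{z_j}$ is checked as an equality of holomorphic functions for every $f\in\mathcal D^2(\boldsymbol\mu)$, it automatically holds as an operator identity on $\mathcal D^2(\boldsymbol\mu)$.

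First I would establish the \emph{left-inverse property}. Fix $f\in\mathcal D^2(\boldsymbol\mu)$ and apply the defining formula \eqref{left inverses} to $M_{z_1}f$. Since $(M_{z_1}f)(z_1,z_2)=z_1 f(z_1,z_2)$ vanishes at $z_1=0$, the numerator in the expression for $(L_1 M_{z_1}f)(z_1,z_2)=\frac{z_1 f(z_1,z_2)-0}{z_1}$ collapses to $z_1 f$, giving $L_1 M_{z_1}f=f$ and hence $L_1 M_{z_1}=I$. The computation for $L_2 M_{z_2}=I$ is identical with the roles of the two variables interchanged.

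Next I would verify the \emph{cross commutation relations}. For $L_1 M_{z_2}$ I would use $(M_{z_2}f)(z_1,z_2)=z_2 f(z_1,z_2)$ together with $(M_{z_2}f)(0,z_2)=z_2 f(0,z_2)$, so that
\[
(L_1 M_{z_2}f)(z_1,z_2)=\frac{z_2 f(z_1,z_2)-z_2 f(0,z_2)}{z_1}=z_2\,\frac{f(z_1,z_2)-f(0,z_2)}{z_1}=(M_{z_2}L_1 f)(z_1,z_2),
\]
the key point being merely that $z_2$ can be factored out of the numerator because it does not involve $z_1$. This yields $L_1 M_{z_2}=M_{z_2}L_1$, and the symmetric manipulation---factoring $z_1$ out of $(M_{z_1}f)(z_1,z_2)-(M_{z_1}f)(z_1,0)=z_1\bigl(f(z_1,z_2)-f(z_1,0)\bigr)$---gives $L_2 M_{z_1}=M_{z_1}L_2$.

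I expect no genuine obstacle here: the entire content is elementary algebra of holomorphic functions, and the only substantive point, namely that $L_1 f$ and $L_2 f$ again lie in $\mathcal D^2(\boldsymbol\mu)$ with bounded dependence on $f$, has already been settled before the statement. As an alternative and as a consistency check, one could run the same computations on the monomial basis, where $L_1(z_1^m z_2^n)=z_1^{m-1}z_2^n$ for $m\geqslant 1$ and $L_1(z_2^n)=0$, and then extend by density of the polynomials; but the pointwise function identities make even this unnecessary. I would also remark, although it is not needed for the stated theorem, that $\ker L_1=\ker M_{z_1}^*$ (and similarly for $L_2$) can be read off from the tensor decomposition in Theorem~\ref{tensor product}, which is precisely what identifies $L_1,L_2$ with the canonical left inverses tied to the left-inverse commuting pair structure highlighted in the abstract.
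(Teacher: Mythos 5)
Your proposal is correct and follows essentially the same route as the paper: the left-inverse property via $(M_{z_1}f)(0,z_2)=0$, and the cross commutation by comparing $L_1M_{z_2}f$ and $M_{z_2}L_1f$ as holomorphic functions (the paper writes this step through the Taylor expansion, but it is the same factoring of $z_2$ out of the numerator), with boundedness of $L_1,L_2$ taken as already established by Lemma~\ref{semi-norm of slice functions} and Theorem~\ref{Gleason property at origin}.
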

\begin{proof}
    By using the definition of the operators $L_1$, defined above, we have 
\[ 
L_1 M_{z_1}f(z_1,z_2) = \frac{(z_1f)(z_1,z_2)-(z_1f)(0,z_2)}{z_1} = f(z_1,z_2) 
\] 
for all $f\in \mathcal{D}^2(\boldsymbol{\mu})$ and for all $(z_1,z_2) \in \mathbb D^2$. Similarly, we also have $L_2 M_{z_2}= M_{z_2}L_2$. On the other hand, for all $(z_1,z_2) \in \mathbb D^2$, we have
\begin{align*}
    (M_{z_2}L_1f)(z_1,z_2) &= z_2 \bigg \{ \frac{f(z_1,z_2) - f(0,z_2)}{z_1} \bigg\} \\ 
     &= z_2 \Big\{\sum_{m_1=1}^\infty \sum_{m_2=0}^\infty a_{m_1,m_2} z_1^{m_1-1} z_2^{m_2} \Big\} \\
    &= \sum_{m_1=1}^\infty \sum_{m_2=0}^\infty a_{m_1,m_2} z_1^{m_1-1} z_2^{m_2+1} \\
    &= \frac{(z_2f)(z_1,z_2) - (z_2f)(0,z_2)}{z_1} \\
     &=  (L_1 M_{z_2}f)(z_1,z_2)
\end{align*} Analogously, one can also derive $L_2M_{z_1}= M_{z_1}L_2$.  
\end{proof}

\begin{rem}
   Note that, by Theorem \ref{Gleason property at origin}, the operators $M_{z_i} \in \mathcal{B}(\mathcal{D}^2(\boldsymbol{\mu}))$ are bounded below for $i=1,2$, hence $(M_{z_i}^* M_{z_i})^{-1}$ exists as well as $M_{z_i}'$s are left invertible. Moreover, for $i=1,2$, $(M_{z_i}^* M_{z_i})^{-1}M_{z_i}^*$ and the above defined operators $L_i$ are left-inverse of $M_{z_i}$ with $\ker L_i = \ker (M_{z_i}^* M_{z_i})^{-1}M_{z_i}^* = \ker M_{z_i}^*$ and therefore $L_i = (M_{z_i}^* M_{z_i})^{-1}M_{z_i}^*$. 
\end{rem}

\section{Wandering Subspace property for a pair of  analytic 2-isometries}

In this section, we deal with an $n$-tuple of commuting left invertible operators acting on a Hilbert space and prove a multivariate version of S. Shimorin’s result (see \cite[Theorem 3.6]{Shimorin}) for a class of operators acting on a Hilbert space. 

For any left invertible operator $T\in\mathcal B(\mathcal H),$ define $L:=(T^*T)^{-1}T^*.$ Note that $L\in\mathcal B(\mathcal H)$ is the left inverse of $T$ with the property that $\ker L=\ker T^*.$
In the following lemma, we prove that joint kernel of adjoint is non-trivial for any analytic left-inverse commuting tuple.
\begin{lemma}\label{projection and wandering subspace}
     Let $(T_1,\ldots, T_n)$ be a left-inverse commuting $n$-tuple of analytic operators on a Hilbert space $\mathcal{H}$. Then the subspace $\bigcap_{i=1}^n \ker T_i^*$ is a non-trivial closed subspace of $\mathcal{H}$. 
\end{lemma}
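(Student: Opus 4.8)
The plan is to argue by induction on $n$. For the base case $n=1$, note that $T_1$ is left invertible, hence bounded below, so $T_1\mathcal H$ is closed and $\ker T_1^* = \mathcal H \ominus T_1\mathcal H$. If this space were trivial then $T_1$ would be surjective and, being bounded below, invertible; but then $T_1^k\mathcal H = \mathcal H$ for every $k$, contradicting the analyticity $\bigcap_{k\geqslant 0} T_1^k\mathcal H = \{0\}$ (assuming, as the statement implicitly does, that $\mathcal H\neq\{0\}$). Thus $\ker T_1^*\neq\{0\}$.

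The engine of the inductive step is the observation that the orthogonal projections $P_i := T_i L_i = T_i(T_i^*T_i)^{-1}T_i^*$ onto $T_i\mathcal H$ mutually commute. Indeed, for $i\neq j$ the left-inverse commuting relation $L_iT_j = T_jL_i$ together with $T_iT_j=T_jT_i$ gives
\[
P_iT_j = T_iL_iT_j = T_iT_jL_i = T_jT_iL_i = T_jP_i,
\]
so $P_i$ commutes with $T_j$; taking adjoints (recall $P_i=P_i^*$) it also commutes with $T_j^*$, hence with $T_j^*T_j$, with its inverse, and therefore with $L_j = (T_j^*T_j)^{-1}T_j^*$ and with $P_j = T_jL_j$. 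In particular $\mathcal K := \ker T_n^* = \operatorname{ran}(I-P_n)$ is a \emph{reducing} subspace for each of $T_1,\dots,T_{n-1}$ (as well as for $L_1,\dots,L_{n-1}$), since $I-P_n$ commutes with all of them.

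I would then transfer the hypotheses to $\mathcal K$. Put $S_j := T_j|_{\mathcal K}$ for $1\leqslant j\leqslant n-1$. Because $\mathcal K$ reduces $T_j$, the operator $S_j$ is bounded below on $\mathcal K$, its adjoint is $T_j^*|_{\mathcal K}$, and its canonical left inverse is $(S_j^*S_j)^{-1}S_j^* = L_j|_{\mathcal K}$; consequently $\ker S_j^* = \ker T_j^*\cap\mathcal K$. The restricted relations $(L_i|_{\mathcal K})\,S_j = S_j\,(L_i|_{\mathcal K})$ show that $(S_1,\dots,S_{n-1})$ is again left-inverse commuting, and since $S_j^k\mathcal K = T_j^k\mathcal K\subseteq T_j^k\mathcal H$, analyticity of $T_j$ forces $\bigcap_k S_j^k\mathcal K = \{0\}$, so the $S_j$ are analytic. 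By the base case $\mathcal K\neq\{0\}$, so the induction hypothesis applies to the $(n-1)$-tuple $(S_1,\dots,S_{n-1})$ on $\mathcal K$ and yields $\bigcap_{i=1}^{n-1}\ker S_i^*\neq\{0\}$. Finally,
\[
\bigcap_{i=1}^{n-1}\ker S_i^* = \Big(\bigcap_{i=1}^{n-1}\ker T_i^*\Big)\cap\mathcal K = \bigcap_{i=1}^{n}\ker T_i^*,
\]
which completes the induction.

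The only genuinely load-bearing step is the commutativity of the projections $P_i$, and more precisely the upgrade from ``$\mathcal K$ is invariant'' to ``$\mathcal K$ is reducing'' for the remaining operators. This is exactly what guarantees that the adjoints, the Cauchy-dual left inverses, and the kernels $\ker T_j^*$ all restrict correctly to $\mathcal K$, so that the full set of hypotheses is inherited by the smaller tuple and the induction hypothesis can be invoked. Everything else is routine verification.
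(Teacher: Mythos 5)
Your proof is correct, and it runs on the same engine as the paper's: the relation $L_iT_j=T_jL_i$ forces $\ker T_i^*$ to reduce every $T_j$, $j\neq i$, and then analyticity plus left invertibility of a restriction yields a nonzero cokernel. Where you differ is in the shape of the induction and in how the reducing property is packaged. The paper keeps the ambient space fixed and shrinks the subspace: it proves, by an elementwise argument (apply $L_iT_{k+1}=T_{k+1}L_i$ to kernel vectors, and use ordinary commutativity for $T_{k+1}^*$-invariance), that the joint kernel $\bigcap_{i\leqslant k}\ker T_i^*$ reduces $T_{k+1}$, and then restricts only that single operator to it --- so it never has to check that a restricted tuple is again left-inverse commuting. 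You instead induct on the number of operators, restricting the entire $(n-1)$-tuple to $\mathcal K=\ker T_n^*$, which obliges you to verify that adjoints, the left inverses $L_j$, the commutation relations, and analyticity all pass to $\mathcal K$; your tool for this --- the mutual commutativity of the range projections $P_i=T_iL_i$ with all of the $T_j$, $T_j^*$, $L_j$ --- is exactly the operator-theoretic form of the equivalence the paper records separately as Corollary \ref{reducing condition}. The trade-off: the paper's iteration is leaner (one restriction and fewer verifications per step), while your projection lemma makes the inheritance of the full hypothesis set transparent and gives a formally cleaner induction; both arguments are complete.
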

   
\begin{proof} 
    For each $1\leqslant i \leqslant n,$ define $L_i:=(T_i^*T_i)^{-1}T_i^*.$ Since $T_1$ is analytic and left invertible, we can conclude that $\ker T_1^*$ is non-trivial.
    Let $x_0 \in \ker T_1^*$.  Since $L_1x_0 =0$, it follows that $T_2L_1x_0 = 0$. By using the hypothesis, we have $L_1T_2 x_0 = 0$, which implies $T_2x_0 \in \ker T_1^*$. 
    Hence, $\ker T_1^*$ is invariant under $T_2$. On the other hand, using the commutativity of $T_1$ and $T_2$, it is straightforward to see that $\ker T_1^*$ is invariant under $T_2^*$. Hence, $\ker T_1^*$ is a $T_2$-reducing subspace. Since $T_2 \big \vert_{\ker T_1^*}: \ker T_1^* \to \ker T_1^*$ is an analytic operator, that is, 
    \[ 
    \bigcap_{n \geqslant 0} \big(T_2 \big \vert_{\ker T_1^*}\big)^n(\ker T_1^*) = \{0\},
    \] 
    therefore, $T_2 \big \vert_{\ker T_1^*}$ cannot be onto. Moreover, since $T_2 \big \vert_{\ker T_1^*}$ is also left-invertible, it's range is closed. Therefore, the subspace $\ker (T_2^* \big \vert_{\ker T_1^*}) \subseteq  \ker T_1^* $ is non-trivial. Hence, $\ker T_1^* \bigcap \ker T_2^* \neq \{0\}$. Repeating the analogous argument a finite number of times we conclude that $\bigcap_{i=1}^n \ker T_i^* $ is non-trivial.
    \end{proof}


From the proof of the above lemma, the following result is evident. This generalizes \cite[Proposition 2.2]{ABJS}.
\begin{cor}\label{reducing condition}
Let $\boldsymbol T=(T_1,\ldots,T_n) \in \mathcal{B}(\mathcal{H})^n$ be any commuting $n$-tuple of bounded linear operators. For any $i,j \in \{1,\ldots,n\}$ with $i\neq j$, the following are equivalent:
\begin{enumerate}
    \item[(a)] $L_iT_j=T_jL_i$ 
    \item[(b)] $\ker T_i^*$ is a $T_j$-reducing subspace of $\mathcal{H}$.  
\end{enumerate}
\end{cor}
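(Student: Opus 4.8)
The plan is to reduce the asserted equivalence to a single algebraic identity by passing to orthogonal projections. First I would record the elementary facts about $L_i$: since each $T_i$ is left invertible (as is implicit in the very definition of $L_i = (T_i^*T_i)^{-1}T_i^*$), its range is closed, $L_iT_i = I$, and $\ker L_i = \ker T_i^*$. The central observation is that $Q_i := T_iL_i = T_i(T_i^*T_i)^{-1}T_i^*$ is the orthogonal projection of $\mathcal H$ onto $\mathrm{ran}\,T_i$: it is visibly self-adjoint, the identity $L_iT_i=I$ gives $Q_i^2 = T_i(L_iT_i)L_i = T_iL_i = Q_i$, and $Q_iT_i = T_i$ together with $Q_i = T_iL_i$ shows $\mathrm{ran}\,Q_i = \mathrm{ran}\,T_i = (\ker T_i^*)^\perp$. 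Consequently $P_i := I - Q_i$ is the orthogonal projection onto $\ker T_i^*$.

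Next I would invoke the standard characterization of reducing subspaces: a closed subspace is reducing for $T_j$ if and only if the orthogonal projection onto it commutes with $T_j$. Thus statement (b), that $\ker T_i^*$ is $T_j$-reducing, is equivalent to $P_iT_j = T_jP_i$, and since $P_i = I - Q_i$ this is in turn equivalent to $Q_iT_j = T_jQ_i$, that is, to the identity $T_iL_iT_j = T_jT_iL_i$.

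Now I would use the hypothesis that the tuple is commuting: $T_jT_i = T_iT_j$ turns the right-hand side into $T_iT_jL_i$, so the displayed identity becomes $T_iL_iT_j = T_iT_jL_i$. Since $T_i$ is injective (being left invertible), I can cancel it from the left to obtain precisely $L_iT_j = T_jL_i$, which is statement (a). Because every link in this chain is an equivalence, both implications follow simultaneously; in particular this recovers the direction $(a)\Rightarrow(b)$ that was already exploited inside the proof of Lemma \ref{projection and wandering subspace}, where it was phrased in the element-wise form $L_ix_0=0 \Rightarrow L_iT_jx_0 = T_jL_ix_0 = 0$.

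The argument is essentially formal, so I do not anticipate a genuine obstacle; the only point demanding a little care is the verification that $Q_i = T_iL_i$ really is the orthogonal projection onto $\mathrm{ran}\,T_i$, which rests on the closedness of $\mathrm{ran}\,T_i$ and on the identity $\mathrm{ran}\,T_i = (\ker T_i^*)^\perp$ — both guaranteed by the left invertibility of $T_i$. Everything else is a transparent manipulation of the commutation relations and the cancellation afforded by the injectivity of $T_i$.
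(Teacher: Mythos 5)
Your proposal is correct, and it proves both implications; but it follows a different route from the paper. The paper does not give a self-contained proof of this corollary at all: it simply states that the result is evident from the proof of Lemma \ref{projection and wandering subspace}, where the direction (a) $\Rightarrow$ (b) is carried out element-wise — for $x_0 \in \ker T_i^* = \ker L_i$ one has $T_jL_ix_0 = 0$, hence $L_iT_jx_0 = 0$ by (a), giving $T_j$-invariance, while $T_j^*$-invariance follows from commutativity alone; the converse (b) $\Rightarrow$ (a) is left to the reader (it can be checked on the decomposition $\mathcal H = \ker T_i^* \oplus \mathrm{ran}\, T_i$, using $L_iT_iy = y$ and $T_jT_i = T_iT_j$ on the second summand). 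Your argument instead identifies $Q_i = T_iL_i$ as the orthogonal projection onto $\mathrm{ran}\,T_i$, invokes the characterization of reducing subspaces by commuting projections, and collapses the whole statement into the single chain of equivalences $P_iT_j = T_jP_i \Leftrightarrow T_iL_iT_j = T_iT_jL_i \Leftrightarrow L_iT_j = T_jL_i$, the last step by injectivity of $T_i$. What your approach buys is symmetry and completeness: both directions come out of one computation, and the role of left invertibility (closed range, injectivity, $\mathrm{ran}\,T_i = (\ker T_i^*)^\perp$) is made explicit rather than implicit. What the paper's element-wise route buys is that it isolates the useful one-sided fact exploited repeatedly later (identity (a) forces $T_j$ to map $\ker T_i^*$ into itself), and it makes transparent that the $T_j^*$-invariance half of (b) costs nothing beyond commutativity, so that (a) is really equivalent to mere $T_j$-invariance of $\ker T_i^*$.
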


Now, we proceed to prove the existence of the wandering subspace property for a class of left-inverse commuting $n$-tuple of bounded linear operators. This generalizes many existing results e.g. see \cite{Ma, RT, SSW, ABJS}. 

{Here, we would also like to take this opportunity to point out that conditions (analytic or algebraic) on $\ker T^*$ have played crucial role in modelling a class of bounded linear operators on a Hilbert space, for instance see \cite[Theorem 2.5]{ACJS}.}
\begin{theorem}\label{wandering subspace property-abstract pair}
Let $\boldsymbol T=(T_1,\ldots,T_n)$ be a left-inverse commuting tuple on a Hilbert space $\mathcal{H}$. Then the tuple $\boldsymbol T$ has individual wandering subspace property if and only if it has the joint wandering subspace property. Moreover, $\mathcal{W} = \bigcap_{i=1}^n \ker T_i^*$ is the corresponding generating wandering subspace for the $n$-tuple $\boldsymbol T$ on $\mathcal{H}$. 
\end{theorem}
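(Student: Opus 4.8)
The plan is to prove the two implications separately, after recording the easy observation that $\mathcal{W}=\bigcap_{i=1}^n\ker T_i^*$ is automatically a wandering subspace for $\boldsymbol T$: if $w,w'\in\mathcal W$ and $(k_1,\dots,k_n)\neq 0$, choose $i$ with $k_i\geqslant 1$; by commutativity $T_1^{k_1}\cdots T_n^{k_n}w'=T_i^{k_i}(\cdots)\in T_i\mathcal H$, while $w\in\ker T_i^*=\mathcal H\ominus T_i\mathcal H$, so $\langle w,\,T_1^{k_1}\cdots T_n^{k_n}w'\rangle=0$. Thus in both directions only the generating (density) statement carries content, and the central structural input is Corollary \ref{reducing condition}, which guarantees that each $\ker T_i^*$ is a $T_j$-reducing subspace for every $j\neq i$.

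For the implication (a) $\Rightarrow$ (b), I would start from a generating wandering subspace $\mathcal E$ for $\boldsymbol T$ and first show $\mathcal E\subseteq\ker T_i^*$ for every $i$. Fixing $i$ and factoring out one copy of $T_i$ by commutativity, the closed span of $\{T_1^{k_1}\cdots T_n^{k_n}\mathcal E\}$ over those indices with $k_i\geqslant 1$ equals $T_i\mathcal H$, using that $T_i$ has closed range and that the full span is dense; since $\mathcal E$ is orthogonal to each such vector by the wandering property, $\mathcal E\perp T_i\mathcal H$, i.e. $\mathcal E\subseteq\ker T_i^*$. Now fix $i=1$ and regroup $\mathcal H=\overline{\mathrm{span}}\{T_1^{k_1}(T_2^{k_2}\cdots T_n^{k_n}\mathcal E)\}$; because $\mathcal E\subseteq\ker T_1^*$ and $\ker T_1^*$ is invariant under $T_2,\dots,T_n$ by Corollary \ref{reducing condition}, the inner vectors lie in $\ker T_1^*$, whence $\mathcal H=\overline{\mathrm{span}}\{T_1^{k_1}\ker T_1^*\}$ and $T_1$ enjoys the wandering subspace property. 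Running the same argument for each index yields the individual wandering subspace property.

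For the implication (b) $\Rightarrow$ (a), I would argue by induction on $n$, the case $n=1$ being tautological with $\mathcal W=\ker T_1^*$. For the inductive step, $\ker T_1^*$ reduces $T_2,\dots,T_n$, and I would verify two things. First, the restricted tuple $\boldsymbol T'=(T_2|_{\ker T_1^*},\dots,T_n|_{\ker T_1^*})$ is again left-inverse commuting: since $\ker T_1^*$ reduces each $T_i$, it reduces $T_i^*$, $T_i^*T_i$, and hence $L_i=(T_i^*T_i)^{-1}T_i^*$, so the relations $L_iT_j=T_jL_i$ simply restrict. Second, $\boldsymbol T'$ inherits the individual wandering subspace property: writing $P$ for the orthogonal projection onto $\ker T_1^*$, $P$ commutes with $T_j$ and $T_j^*$, so for $x\in\ker T_1^*$, applying $P$ to an expansion $x=\lim\sum_k T_j^kw_k$ with $w_k\in\ker T_j^*$ (which exists since $T_j$ has the wandering subspace property on $\mathcal H$) gives $x=Px=\lim\sum_k T_j^k(Pw_k)$ with $Pw_k\in\ker T_1^*\cap\ker T_j^*=\ker\big(T_j^*|_{\ker T_1^*}\big)$; this shows $T_j|_{\ker T_1^*}$ has the wandering subspace property on $\ker T_1^*$. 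By the induction hypothesis $\boldsymbol T'$ then has the joint wandering subspace property with generating wandering subspace $\bigcap_{i=2}^n\ker\big(T_i^*|_{\ker T_1^*}\big)=\mathcal W$, so $\ker T_1^*=\overline{\mathrm{span}}\{T_2^{k_2}\cdots T_n^{k_n}\mathcal W\}$. Substituting this into $\mathcal H=\overline{\mathrm{span}}\{T_1^{k_1}\ker T_1^*\}$ (the wandering subspace property of $T_1$) yields $\mathcal H=\overline{\mathrm{span}}\{T_1^{k_1}\cdots T_n^{k_n}\mathcal W\}$, completing the induction and identifying $\mathcal W$ as the generating wandering subspace asserted in the ``moreover''.

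The main obstacle I anticipate lies in the inductive step of (b) $\Rightarrow$ (a): one must confirm that descent to the reducing subspace $\ker T_1^*$ preserves both the left-inverse commuting structure and the individual wandering subspace property, and the projection argument transferring the density statement of $T_j$ down to $\ker T_1^*$ is the technical heart. By comparison, the orthogonality bookkeeping that forces $\mathcal E\subseteq\ker T_i^*$ in (a) $\Rightarrow$ (b) is routine once the closed range of each $T_i$ is used to identify the relevant spans with $T_i\mathcal H$.
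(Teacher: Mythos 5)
Your proof is correct, and it rests on the same structural pillar as the paper's: Corollary \ref{reducing condition} combined with the observation that projecting a decomposition $\mathcal H=\bigvee_{k\geqslant 0}T_j^{k}\ker T_j^{*}$ onto a $T_j$-reducing subspace $\mathcal M$ yields $\mathcal M=\bigvee_{k\geqslant 0}T_j^{k}\big(\mathcal M\cap\ker T_j^{*}\big)$. The packaging, however, differs in both directions. For individual $\Rightarrow$ joint, the paper runs a forward induction entirely inside the ambient space: having shown $\mathcal H=\bigvee T_1^{k_1}\cdots T_l^{k_l}\big(\bigcap_{i\leqslant l}\ker T_i^{*}\big)$, it projects $\mathcal H=\bigvee T_{l+1}^{k}\ker T_{l+1}^{*}$ onto the $T_{l+1}$-reducing subspace $\bigcap_{i\leqslant l}\ker T_i^{*}$ and substitutes; you instead restrict $(T_2,\ldots,T_n)$ to $\ker T_1^{*}$ and invoke an induction hypothesis on the smaller space, which obliges you to verify (as you correctly do) that left-inverse commutativity and the individual wandering subspace property descend to the reducing subspace --- the paper's formulation sidesteps those verifications, at the cost of carrying the intersection subspaces along explicitly. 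For joint $\Rightarrow$ individual, your argument is in fact more complete than the paper's: the paper applies $P_{\ker T_j^{*}}$ to \eqref{for converse} with $l=n$, which tacitly presumes that the joint generating wandering subspace is $\bigcap_i\ker T_i^{*}$, whereas the definition of the joint wandering subspace property only supplies \emph{some} generating wandering subspace $\mathcal E$; your preliminary step proving $\mathcal E\subseteq\ker T_i^{*}$ for every $i$ (which uses only commutativity and the closedness of each $T_i\mathcal H$) closes this gap and simultaneously delivers the ``moreover'' identification of $\mathcal W$ as the generating wandering subspace.
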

\begin{proof}
We first assume that the tuple $\boldsymbol T$ has individual wandering subspace property. 
For the subspace $\mathcal{W} = \bigcap_{i=1}^n \ker T_i^* \subseteq \mathcal{H}$, one sided inclusion is trivial, that is, 
    \[
    \bigvee_{k_1,\ldots,k_n \geqslant 0} T_1^{k_1} \cdots T_n^{k_n} \mathcal W \subseteq \mathcal{H}. 
    \]  
 To prove the other inclusion, note that, by hypothesis, we know that the operator $T_1$ on $\mathcal{H}$ has the wandering subspace property. Therefore, we have $\mathcal{H} = \bigvee_{k_1 \geqslant 0} T_1^{k_1} (\ker{T_1^*})$. 
 On the other hand, since the $n$-tuple $\boldsymbol T$ is a left-inverse commuting tuple, by Corollary \ref{reducing condition} the subspace $\ker{T_1^*}$ is a $T_2$-reducing subspace. Therefore, we have 
 \[ 
 \ker{T_1^*} = P_{\ker{T_1^*}} ( \bigvee_{k_2 \geqslant 0} T_2^{k_2} \ker{T_2^*} ) 
             = \bigvee_{k_2 \geqslant 0} T_2^{k_2} (\ker{T_1^*} \bigcap \ker{T_2^*}).
                 \]
 Combining all these we have 

 \[
 \mathcal{H}  = \bigvee_{k_1,k_2 \geqslant 0} T_1^{k_1} T_2^{k_2} (\ker{T_1^*} \bigcap \ker{T_2^*}).
 \]
 For some $l \in \{1,\ldots,n\}$, let the corresponding tuple $(T_1,\ldots,T_l)$ has the wandering subspace property, that is, 
 \begin{eqnarray}\label{for converse}
      \mathcal{H} = \bigvee_{k_1,\ldots,k_l \geqslant 0} T_1^{k_1} \cdots T_l^{k_l} \Big( \bigcap_{i=1}^l \ker{T_i^*}\Big).
 \end{eqnarray}
 Again, by applying Corollary \ref{reducing condition}, the subspace $\bigcap_{i=1}^l \ker{T_i^*}$ is $T_{l+1}$-reducing. As a result, we have
 \[
 \bigcap_{i=1}^l \ker{T_i^*} = P_{\bigcap_{i=1}^l \ker{T_i^*}} \Big( \bigvee_{k_{l+1} \geqslant 0} T_{l+1}^{k_{l+1}} \ker{T_{l+1}^*} \Big) 
             = \bigvee_{k_{l+1} \geqslant 0} T_{l+1}^{k_{l+1}} \Big(\bigcap_{i=1}^{l+1} \ker{T_i^*} \Big).
 \]
 Hence, the tuple $(T_1,\ldots,T_{l+1})$ also has the wandering subspace property with the wandering subspace $\bigcap_{i=1}^{l+1} \ker{T_i^*}$. Therefore, by induction, we conclude that the given tuple $T=(T_1,\ldots,T_n)$ has the wandering subspace property with the generating wandering subspace $\bigcap_{i=1}^n \ker{T_i^*}$.

 To see the converse part, we assume $\boldsymbol T$ has the joint wandering subspace property. We claim that for each $1\leqslant j\leqslant n,$ $T_j$ has the wandering subspace property. This can easily be seen by applying $P_{\ker T_j^*}$ in \eqref{for converse} for $l=n$ followed by an application of Corollary \ref{reducing condition}. 
\end{proof}

\begin{rem}
The proof above indicates that Theorem \ref{wandering subspace property-abstract pair} is also true with much lesser hypothesis. Specifically, statement of Theorem \ref{wandering subspace property-abstract pair} is valid for any $n$-tuple of commuting left-invertible operators $(T_1,\ldots,T_n)$ acting on a Hilbert space $\mathcal{H},$ provided each $T_j$ has the wandering subspace property, and for each fixed $j \in \{2,\ldots,n\},$ 
\[ 
L_i T_j = T_j L_i \quad \quad \text{for all } i = 1,\ldots,j-1.
\]
On the other hand, once we have the conclusion of Theorem \ref{wandering subspace property-abstract pair}, we can easily derive the remaining operator identities namely  for each fixed $j \in \{2,\ldots,n\},$
\[ 
L_i T_j = T_j L_i \quad \quad \text{for all } i = j+1,\ldots,n.
\]
In other words, having lesser number of operator identities in the hypothesis of Theorem \ref{wandering subspace property-abstract pair} does not expand the class of commuting tuple of the operators having wandering subspace property within this set up. This is why we  choose to write the statement of Theorem \ref{wandering subspace property-abstract pair} with full set of operator identities $L_i T_j = T_j L_i, \ 1\leqslant i \neq j \leqslant n.
$ 
\end{rem}

The following example presents a class of commuting \(n\)-tuples of bounded linear operators that are not doubly commuting but are left-inverse commuting tuples.
\begin{example}\label{counter example - weighted shift}
Suppose $\boldsymbol T = (T_1, \ldots, T_n)$ is a commuting operator-valued multishift on $\mathcal H = \oplus_{\alpha \in \mathbb Z_{\geqslant 0}^n} H_\alpha$ with invertible operator weights $\big\{A^{(j)}_{\alpha}: \alpha \in \mathbb Z_{\geqslant 0}^n,\  j=1, \ldots, n\big\}$. For the definition and other properties, we refer the reader to \cite{L, LT, JL, JJS, H, GKT-1, GKT-2}. Using the action of $T_j^*$ and $T_i,$ one can easily verify that $T_j^*T_i = T_iT_j^*$ if and only if     
\begin{equation}\label{doubly_commuting_operator_valued_weighted_shift}
A^{(i)}_{\alpha + \varepsilon_j}(A^{(j)}_{\alpha} A^{(j)*}_{\alpha})A^{(i)^{-1}}_{\alpha + \varepsilon_j} = A^{(j)}_{\alpha + \varepsilon_i}A^{(j)*}_{\alpha + \varepsilon_i}, \quad 1 \leqslant i,j \leqslant n, \alpha \in \mathbb Z_{\geqslant 0}^n,
\end{equation}
where for each $1 \leqslant i \leqslant n,$ $\varepsilon_i:=(0,\ldots,0,1,0,\ldots,0)\in\mathbb Z_{\geqslant 0}^n$ with $1$ at the $i$-th place and $0$ elsewhere.
For each $1 \leqslant i \leqslant n,$ one can easily compute $L_i,$ the left inverse of $T_i$ with $\ker L_i = \ker T_i^*,$ to be given by 
\begin{equation}\label{left_inverse_for_operator_valued_weighted_shift}
L_i\left(\oplus_{\alpha \in \mathbb N^n}x_{\alpha}\right)= \oplus_{\alpha \in \mathbb N^n} A^{(j)^{-1}}_{\alpha} x_{\alpha + \varepsilon_i}, \quad \oplus_{\alpha \in \mathbb Z_{\geqslant 0}^n} x_{\alpha} \in \mathcal H,\ i = 1, \ldots, n.
\end{equation}
From \eqref{left_inverse_for_operator_valued_weighted_shift} and the action of $T_j,$ it easily follows that for any $1 \leqslant i,j \leqslant n,$ the equality $L_iT_j = T_jL_i$ holds if and only if 
\begin{equation}\label{left_inverse_commuting_operator_valued_weighted_shift}
A^{(j)}_{\alpha + \varepsilon_i}A^{(i)}_{\alpha} = A^{(i)}_{\alpha + \varepsilon_j}A^{(j)}_{\alpha}, \quad \alpha \in \mathbb Z_{\geqslant 0}^n.
\end{equation}
Equality in \eqref{left_inverse_commuting_operator_valued_weighted_shift} is precisely the condition for commutativity of the tuple $(T_1, \ldots, T_n).$ This discussion proves that the class of operator-valued multishifts with invertible operator weights is always a class of left-inverse commuting tuples, whereas it is rarely doubly commuting. For instance, if we take $A^{(j)}_{\alpha} = w^{(j)}_{\alpha} I$ where $w^{(j)}_{\alpha} \neq 0,$ then \eqref{doubly_commuting_operator_valued_weighted_shift} simply becomes 
\begin{equation}\label{doubly commuting for scalar valued multi-shift}
|w^{(j)}_{\alpha}| = |w^{(j)}_{\alpha + \varepsilon_i}|, \quad 1 \leqslant i,j \leqslant n, \alpha \in \mathbb Z_{\geqslant 0}^n.
\end{equation}
 Equation \eqref{doubly commuting for scalar valued multi-shift} indicates that the class of weighted multi-shift which are doubly commuting is extremely small whereas each of them is left-inverse commuting tuple.
\end{example}

\section{Model for a class of pair of cyclic analytic 2-isometries}

In this section, we prove the following model theorem which shows that the class of multiplication operators $(M_{z_1},M_{z_2})$ on $\mathcal D^2(\boldsymbol{\mu})$ completely characterizes that of left-inverse commuting pair of analytic $2$-isometries which satisfy certain splitting properties. With the help of this theorem, we give an example of a left-inverse commuting pair of analytic $2$-isometries which is not a doubly commuting pair.
\begin{theorem}\label{Model theorem}
   Let $(T_1,T_2)$ be a pair of cyclic, analytic, 2-isometries on a Hilbert space $\mathcal{H}$ with unit cyclic vector $x_0 \in \ker T_1^*\cap \ker T_2^*$. Then $(T_1,T_2)$ on $\mathcal{H}$ is unitarily equivalent with $(M_{z_1}, M_{z_2})$ on $\mathcal{D}^{(2)}(\boldsymbol{\mu})$ if and only if $L_1 T_2 = T_2 L_1$, $L_2 T_1 = T_1 L_2$ and for all $m,n \in \mathbb N \setminus \{0\}$ 
   \begin{align*}\label{splitting-T-2}
   &\langle T_1^m T_2^n x_0, T_1 T_2 x_0 \rangle = \langle T_1^mx_0, T_1x_0 \rangle \langle T_2^nx_0, T_2x_0 \rangle,\\ 
   & \langle T_1^mT_2x_0, T_1 T_2^nx_0 \rangle = \langle T_1^mx_0, T_1x_0 \rangle \langle T_2x_0, T_2^nx_0 \rangle.
   \end{align*}
\end{theorem}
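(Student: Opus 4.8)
The statement is an equivalence, so the plan is to dispose of necessity quickly and to concentrate on sufficiency, where a measure must be manufactured and an explicit unitary constructed. For necessity, suppose $(T_1,T_2)$ is unitarily equivalent to $(M_{z_1},M_{z_2})$ on $\mathcal D^{(2)}(\boldsymbol\mu)$ via a unitary $W$. Since $\ker M_{z_1}^*\cap\ker M_{z_2}^*$ is one dimensional and spanned by the constant function $1$ (the corollary following Theorem \ref{tensor product}), the unit cyclic vector $x_0\in\ker T_1^*\cap\ker T_2^*$ must satisfy $Wx_0=1$ after absorbing a unimodular constant, whence $W(T_1^mT_2^nx_0)=z_1^mz_2^n$. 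The identities $L_1T_2=T_2L_1$ and $L_2T_1=T_1L_2$ are then transported from Theorem \ref{L_1 commutes with M_{z_2}} (using that $L_i=(T_i^*T_i)^{-1}T_i^*$ is preserved under unitary equivalence), and both splitting identities follow at once from the factorization in Lemma \ref{inner product- monomials - in term of single variable}, read through $\langle T_1^mT_2^nx_0,T_1^pT_2^qx_0\rangle=\langle z_1^mz_2^n,z_1^pz_2^q\rangle_{\boldsymbol\mu}$.

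For sufficiency, I would first restrict to the cyclic summands. The subspace $\mathcal H_1=\overline{\mathrm{span}}\{T_1^mx_0:m\geqslant0\}$ is $T_1$-invariant, and $T_1|_{\mathcal H_1}$ is a cyclic analytic $2$-isometry with cyclic vector $x_0\in\ker(T_1|_{\mathcal H_1})^*$; Richter's representation theorem \cite[Theorem 5.1]{Richter1} then supplies $\mu_1\in\mathcal M_+(\mathbb T)$ with $\langle T_1^mx_0,T_1^px_0\rangle=\langle z^m,z^p\rangle_{\mathcal D(\mu_1)}$ for all $m,p$. Running the same argument for $T_2$ yields $\mu_2$ with $\langle T_2^nx_0,T_2^qx_0\rangle=\langle z^n,z^q\rangle_{\mathcal D(\mu_2)}$, and I set $\boldsymbol\mu=\mu_1\times\mu_2\in\mathcal P\mathcal M_+(\mathbb T^2)$.

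The heart of the argument is to upgrade these single-variable identities to the full Gram factorization
\[
\langle T_1^mT_2^nx_0,\,T_1^pT_2^qx_0\rangle=\langle T_1^mx_0,T_1^px_0\rangle\,\langle T_2^nx_0,T_2^qx_0\rangle,\qquad m,n,p,q\geqslant0.
\]
Writing $G_{m,n,p,q}$ for the left side and $F_{m,n,p,q}$ for the right, I would exploit that, since $T_1,T_2$ commute and each is a $2$-isometry, both quantities satisfy the two second-difference recursions $X_{m+1,n,p+1,q}-2X_{m,n,p,q}+X_{m-1,n,p-1,q}=0$ (for $m,p\geqslant1$) and $X_{m,n+1,p,q+1}-2X_{m,n,p,q}+X_{m,n-1,p,q-1}=0$ (for $n,q\geqslant1$): for $G$ these come from $T_i^{*2}T_i^2-2T_i^*T_i+I=0$ applied to $T_1^{m-1}T_2^nx_0,\,T_1^{p-1}T_2^qx_0$ and to the symmetric choice, while for $F$ they are inherited factorwise from Richter's single-variable recursions. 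A solution of both recursions is affine along each diagonal of the two index pairs, hence is determined by its values on the set $B=\{m\wedge p\leqslant1,\ n\wedge q\leqslant1\}$, so it suffices to verify $G=F$ on $B$. When $m\wedge p=0$ with $m\neq p$, one of the two vectors lies in $\mathrm{ran}\,T_1$ and the other in the $T_2$-reducing subspace $\ker T_1^*$ (Corollary \ref{reducing condition}), giving $G=0$; likewise $F=0$ because $\langle x_0,T_1^px_0\rangle=0$, and symmetrically when $n\wedge q=0$. The pure cases $m=p=0$ or $n=q=0$ are immediate from $\|x_0\|=1$. Finally, using the conjugate symmetry $X_{m,n,p,q}=\overline{X_{p,q,m,n}}$, the four remaining corner cases $m\wedge p=n\wedge q=1$ reduce to $G_{m,n,1,1}$ and $G_{m,1,1,q}$, which are exactly the two splitting hypotheses; thus $G=F$ on $B$, and hence everywhere.

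With the factorization established, I would define $U\colon\mathcal D^{(2)}(\boldsymbol\mu)\to\mathcal H$ by $U(z_1^mz_2^n)=T_1^mT_2^nx_0$, extended linearly. The factorization together with Lemma \ref{inner product- monomials - in term of single variable} shows $U$ preserves inner products of monomials; since the polynomials are dense in $\mathcal D^{(2)}(\boldsymbol\mu)$ (by the density theorem proved earlier) and $x_0$ is cyclic for $(T_1,T_2)$, $U$ extends to a unitary, and $UM_{z_i}=T_iU$ on monomials yields the intertwining. I expect the main obstacle to be the third step, the passage from the two special corner hypotheses to the full Gram factorization: one must organize the double reduction through the two $2$-isometry recursions so that the terminal corner terms align precisely with the hypotheses up to conjugation, while confirming that every off-diagonal boundary term vanishes on both sides through analyticity and the reducing property of $\ker T_i^*$.
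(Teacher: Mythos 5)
Your proof is correct, and it shares the paper's overall architecture: Richter's one-variable theorem applied to each operator produces $\mu_1,\mu_2$; the two-variable Gram identity $\langle T_1^mT_2^nx_0,T_1^pT_2^qx_0\rangle=\langle T_1^mx_0,T_1^px_0\rangle\langle T_2^nx_0,T_2^qx_0\rangle$ is reduced, via the $2$-isometry identity, to the corner cases supplied by the splitting hypotheses; and the unitary is defined on monomials and extended by density using Lemma \ref{inner product- monomials - in term of single variable}. However, your organization of the key reduction is genuinely different and, as it happens, more robust. The paper restricts to $\ker T_2^*$ and $\ker T_1^*$ (via Theorem \ref{wandering subspace property-abstract pair} and Corollary \ref{reducing condition}) and then invokes the closed-form identity $\langle T_1^mT_2^nx_0,T_1^pT_2^qx_0\rangle=(m\wedge p)(n\wedge q)\langle T_1^{m-p+1}T_2^{n-q+1}x_0,T_1T_2x_0\rangle$ for $m\geqslant p$, $n\geqslant q$. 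That identity comes from dropping the constant term in the affine recursion: for a $2$-isometry, $\langle T_1^pu,T_1^pv\rangle=\langle u,v\rangle+p\bigl(\langle T_1u,T_1v\rangle-\langle u,v\rangle\bigr)$, and with $u=T_1^{m-p}T_2^nx_0$, $v=T_2^qx_0$ the term $\langle u,v\rangle$ vanishes (by the reducing property of $\ker T_1^*$) only when $m>p$; on the diagonal, e.g. $m=p=2$, $n=q=1$, the paper's identity would read $\|T_1^2T_2x_0\|^2=2\|T_1T_2x_0\|^2$, whereas the $2$-isometry condition forces $\|T_1^2T_2x_0\|^2=2\|T_1T_2x_0\|^2-\|T_2x_0\|^2$. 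The theorem is unaffected, since both ends of the paper's chain are in fact equal, but the intermediate identity as stated fails there. Your scheme --- both $G$ and $F$ satisfy the two second-difference recursions, hence $G-F$ is affine along both families of diagonals and is determined by its values on $B=\{m\wedge p\leqslant1,\ n\wedge q\leqslant1\}$, where it vanishes by the hypotheses, the reducing property of $\ker T_i^*$, and conjugate symmetry --- treats diagonal and off-diagonal indices uniformly and thereby avoids this slip. Two smaller differences: you restrict to the cyclic subspaces $\overline{\mathrm{span}}\{T_i^kx_0:k\geqslant0\}$ rather than to $\ker T_j^*$, so your sufficiency direction never needs Theorem \ref{wandering subspace property-abstract pair}, only Corollary \ref{reducing condition}; and your necessity argument, pulling the identities back through the unitary after noting that $Wx_0$ must be a unimodular multiple of the constant function $1$, fills in the details of what the paper dispatches in one line.
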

\begin{proof}
    Since $(T_1,T_2)$  is an analytic left-inverse commuting pair, it follows from Theorem \ref{wandering subspace property-abstract pair} that $\ker T_1^*\cap \ker T_2^*$ is wandering subspace for $(T_1,T_2).$   
    Using the hypothesis that $(T_1,T_2)$ is a cyclic pair, it follows that $ \ker T_1^*\cap \ker T_2^*=\mathrm{span}\{x_0\},$ for some unit vector $x_0.$ 
    Consider the operator $S_1:=T_1|_{\ker T_2^*}$ and $S_2:=T_2|_{\ker T_1^*}.$ From Lemma \ref{projection and wandering subspace}, it follows that $S_1$ and $S_2$ are bounded linear operators on $\ker T_2^*$ and $\ker T_1^*$ respectively. Now it follows that $S_1$ and $S_2$ are cyclic analytic $2$-isometries with cyclic subspace $\ker T_1^*\cap \ker T_2^*.$  
    Using \cite[Theorem 5.1]{Richter1}, we have $\mu_1, \mu_2\in\mathcal M_+(\mathbb T)$ and unitary maps $U_1: \ker T_2^* \rightarrow \mathcal{D}(\mu_1)$ and $U_2: \ker T_1^* \rightarrow \mathcal{D}(\mu_2)$ such that
    \[ 
    U_i S_i = M_z U_i ~~\quad ~~\text{and}~~ \quad U_i x_0=1.
    \]
     Let $\boldsymbol{\mu}:= \mu_1 \times \mu_2  $ be the product measure and consider the corresponding Dirichlet-type spaces 
    $ \mathcal{D}^2(\boldsymbol{\mu})$. We define a linear map $U: \mathcal{H} \rightarrow \mathcal{D}^2(\boldsymbol{\mu})$, which is defined by 
    \[ 
    U(p(T_1, T_2)x_0) = p(z_1,z_2),
    \]
    where $p(T_1, T_2)x_0 = \sum_{k_1,k_2=0}^{n_1,n_2} a_{k_1 k_2} T_1^{k_1} T_2^{k_2} x_0$ for any complex valued polynomial $p(z_1,z_2)= \sum_{k_1,k_2=0}^{n_1,n_2}  a_{k_1 k_2} z_1^{k_1} z_2^{k_2} $. We claim that the map defines a unitary map from $\mathcal{H}$ onto $\mathcal{D}^2(\boldsymbol{\mu})$. So we need to show that $\| P(T_1, T_2)x_0 \|^2 = \|P(z_1,z_2)\|^2_{\mathcal{D}^2(\mu)}$. Therefore, it is enough to verify that 
    \[
    \langle T_1^m T_2^n x_0, T_1^p T_2^q x_0 \rangle = \langle z_1^mz_2^n, z_1^p z_2^q \rangle_{\mathcal{D}^2(\mu)} \quad~~\text{for all $m,n,p,q \in \mathbb N.$}
    \]
    First note that, in above, if $n,q=0$ then for any $m,p \in \mathbb N$, considering the above unitary map $U_1$, we have
\begin{align*}
    \langle T_1^m x_0, T_1^p x_0 \rangle &= \langle U_1 T_1^m x_0, U_1 T_1^p x_0 \rangle_{\mathcal{D}(\mu_1)} \nonumber \\
    &= \langle M_z^m U_1 x_0, M_z^p U_1 x_0 \rangle_{\mathcal{D}(\mu_1)} \nonumber \\
    &= \langle z^m, z^p \rangle_{\mathcal{D}(\mu_1)} \nonumber \\
    &= \langle z_1^m, z_1^p \rangle_{\mathcal{D}^2(\boldsymbol{\mu})}. 
\end{align*}
    Similarly, we also have $\langle T_2^n x_0, T_2^q x_0 \rangle = \langle z_2^n, z_2^q \rangle_{\mathcal{D}^2(\boldsymbol{\mu})}$ for any $n,q \in \mathbb N$. On the other hand, by Lemma \ref{projection and wandering subspace}, we know that for any $i,j =1,2$ with $i \neq j$ the subspace $KerT_j^*$ is $T_i$ reducing. Without loss of generality, it is enough to prove for $i=1$ and $j=2$. By using this fact it is obvious that 
    \[ 
    \langle T_1^m x_0, T_1^p T_2^q x_0 \rangle = 0 \quad \quad~~\text{for any $q \in \mathbb N \setminus \{0\}$ and $m,p \in \mathbb N.$}
    \] 
    Now by combining the above equation and the Lemma \ref{inner product- monomials - in term of single variable} we conclude 
    \[ 
    \langle T_1^m x_0, T_1^p T_2^q x_0 \rangle = \langle z_1^m, z_1^p z_2^q \rangle_{\mathcal{D}^2(\boldsymbol{\mu})} = 0 \quad \quad~~\text{for any $q \in \mathbb N \setminus \{0\}$ and $m,p \in \mathbb N.$}
    \]
    It remains to deal with the case where $m,n,p,q \in \mathbb N \setminus \{0\}$. Again, without loss of generality, it is enough to establish the equation for $m \geqslant p, n \geqslant q$ and $m \geqslant p, n \leqslant q$.
    First we consider the case $m \geqslant p, n \geqslant q$. Since $T_1$ is a $2$-isometry therefore we deduce 
\begin{align*}
    \langle T_1^m T_2^n x_0, T_1^p T_2^q x_0 \rangle = (m \wedge p)  \langle T_1^{m-p+1} T_2^nx_0, T_1T_2^q x_0 \rangle.
\end{align*}
    Analogously, using the fact that $T_2$ is a $2$-isometry, we reduce the above term further and it becomes 
\begin{align*}
    (m \wedge p)  \langle T_1^{m-p+1} T_2^nx_0, T_1T_2^q x_0 \rangle 
    &= (m \wedge p) (n \wedge q) \langle T_1^{m-p+1} T_2^{n-q+1}x_0, T_1T_2x_0 \rangle.  
\end{align*}
    Combining all these we have 
\begin{align*}
     \langle T_1^m T_2^n x_0, T_1^p T_2^q x_0 \rangle = (m \wedge p) (n \wedge q) \langle T_1^{m-p+1} T_2^{n-q+1}x_0, T_1T_2x_0 \rangle.
\end{align*}
    By the given hypothesis, the above identities on $T_1$ and $T_2$ individually and the Lemma \ref{inner product- monomials - in term of single variable} we conclude 
\begin{align*}
    \langle T_1^m T_2^n x_0, T_1^p T_2^q x_0 \rangle &= (m \wedge p) (n \wedge q) \langle T_1^{m-p+1} T_2^{n-q+1}x_0, T_1T_2x_0 \rangle \\
    &= (m \wedge p) (n \wedge q) \langle T_1^{m-p+1} x_0, T_1 x_0 \rangle \langle T_2^{n-q+1} x_0, T_2x_0 \rangle \\
    &= (m \wedge p) (n \wedge q) \langle z_1^{m-p+1}, z_1 \rangle_{\mathcal{D}^2(\boldsymbol{\mu})} \langle z_2^{n-q+1}, z_2 \rangle_{\mathcal{D}^2(\boldsymbol{\mu})} \\
    &= \langle z_1^m z_2^n, z_1^p z_2^q \rangle_{\mathcal{D}^2(\boldsymbol{\mu})}. 
\end{align*}   
    Similarly, for the second case with $m \geqslant p, n \leqslant q$ by using the given hypothesis and the Lemma \ref{inner product- monomials - in term of single variable} we deduce 
    \begin{align*}
    \langle T_1^m T_2^n x_0, T_1^p T_2^q x_0 \rangle &=(m \wedge p) (n \wedge q) \langle T_1^{m-p+1} T_2x_0, T_1T_2^{q-n+1}x_0\rangle \\
    &= (m \wedge p) (n \wedge q) \langle T_1^{m-p+1} x_0, T_1 x_0 \rangle \langle T_2 x_0, T_2^{n-q+1} x_0 \rangle \\
    &= (m \wedge p) (n \wedge q) \langle z_1^{m-p+1}, z_1 \rangle_{\mathcal{D}^2(\boldsymbol{\mu})} \langle z_2, z_2^{n-q+1} \rangle_{\mathcal{D}^2(\boldsymbol{\mu})} \\
    &= \langle z_1^m z_2^n, z_1^p z_2^q \rangle_{\mathcal{D}^2(\boldsymbol{\mu})}. 
    \end{align*}
The converse follows as an application of Lemma \ref{inner product- monomials - in term of single variable} and Proposition \ref{L_1 commutes with M_{z_2}}.
\end{proof}

\begin{rem}
    For any $\boldsymbol{\mu},\boldsymbol{\nu}\in\mathcal P \mathcal M_+(\mathbb T^2),$ we content that $(M_{z_1},M_{z_2})$ on $\mathcal D^2(\boldsymbol{\mu})$ is unitarily equivalent to $(M_{z_1},M_{z_2})$ on $\mathcal D^2(\boldsymbol{\nu})$ if and only if $\boldsymbol{\mu}=\boldsymbol{\nu}.$ To prove this, note that $M_{z_1}$ and $M_{z_2}$ are analytic $2$-isometries on $\mathcal D(\mu_j)$ and $\mathcal D(\nu_j)$ for each $j=1,2.$ Therefore if $U$ is the unitary (as in the proof of Theorem \ref{Model theorem}) intertwining pair $(M_{z_1},M_{z_2})$ on  $\mathcal D^2(\boldsymbol{\mu})$ and the pair $(M_{z_1},M_{z_2})$ on  $\mathcal D^2(\boldsymbol{\nu})$ then $M_{z_1}|_{\mathcal D(\mu_1)}$ and $M_{z_1}|_{\mathcal D(\nu_1)}$ is intertwined by the unitary map $U|_{\mathcal D(\mu_1)}.$ Therefore, from \cite[Theorem 5.2]{Richter1}, it follows that $\mu_1=\nu_1.$ Similarly, one can prove $\mu_2=\nu_2.$
\end{rem}
    
   We conclude this section with the following example which also exhibits the importance of Theorem \ref{wandering subspace property-abstract pair}.  
\begin{example} \label{Example-not doubly commuting}
Let $\mu_1,\mu_2\in\mathcal M_+(\mathbb T).$ Consider the space $\mathcal D(\mu_1,\mu_2)$ as introduced in \cite{Sameer et. al.}. We content that the pair $(M_{z_1}, M_{z_2})$ on $\mathcal D(\mu_1,\mu_2)$ is a left-inverse commuting pair of analytic $2$-isometries. 

To see this, we first note that from \cite[Corollary 3.8]{Sameer et. al.} the pair $(M_{z_1}, M_{z_2})$ is a toral $2$-isometry and hence $2$-isometric pair. 
Note that for any $f\in \mathcal D(\mu_1,\mu_2)$, the function $f(0,z_2)$ is in $\mathcal D(\mu_1,\mu_2),$ see Lemma \ref{semi-norm of slice functions}. 

Define $g(z_1,z_2):=f(z_1,z_2)-f(0,z_2).$ 
Note that $g\in \mathcal D(\mu_1,\mu_2)$ and $\{(z_1,z_2)\in\mathbb D^2: z_1=0\}$ is in the zero set of $g.$ 
Now using \cite[Theorem 2.2]{Sameer et. al.}, we observe that $g/z_1$ (this is same as $L_1(f)$, see \eqref{left inverses}) is in $\mathcal D(\mu_1,\mu_2).$ The operator $L_1$ serves as a left inverse of $M_{z_1}$ on $\mathcal D(\mu_1,\mu_2).$ Rerunning the proof of Theorem \ref{L_1 commutes with M_{z_2}} proves that $(M_{z_1},M_{z_2})$ is a left-inverse commuting pair.
By Theorem \ref{wandering subspace property-abstract pair}, the pair $(M_{z_1}, M_{z_2})$ on $\mathcal D(\mu_1,\mu_2)$ possess wandering subspace property. 

Now we assume that each of  $\mu_1$ and $\mu_2$ is normalized Lebesgue measure $\sigma$ on the circle $\mathbb T$. 
With the help of the fact that the set of monomials forms an orthogonal basis for $\mathcal D(\sigma,\sigma)$, it follows from a simple computation that for any $p \geqslant 1$ and $q \geqslant 0$
\[ 
M_{z_1}^*(z_1^p z_2^q) = \frac{p+q+1}{p+q} z_1^{p-1} z_2^q.
\]
It helps us to obtain that
\[
M_{z_2}M_{z_1}^*(z_1^p z_2^q) = \frac{p+q+1}{p+q} z_1^{p-1} z_2^{q+1} \neq \frac{p+q+2}{p+q+1} z_1^{p-1} z_2^{q+1} = M_{z_1}^*M_{z_2}(z_1^p z_2^q).
\]
Therefore, we can conclude that the pair $(M_{z_1},M_{z_2})$ on $\mathcal D(\mu_1,\mu_2)$, where $\mu_1$ and $\mu_2$ are normalized Lebesgue measures on the circle $\mathbb T$, is not a doubly commuting pair. 
\end{example}

 \textbf{Acknowledgements:}  
 The first and second named author's research are supported by the DST-INSPIRE Faculty Grant with Fellowship No. DST/INSPIRE/04/2020/001250
 and DST/INSPIRE/04/2017/002367 respectively. The third named author's research work is supported by the BITS Pilani institute fellowship. 
 The authors are deeply grateful to Prof. Sameer Chavan for his numerous fruitful discussions and insightful comments, which significantly improved the quality and presentation of this article. They would also like to express their appreciation to Dr. Soumitra Ghara and Dr. Ramiz Reza for their valuable observations and feedback on the content.
 The authors also had the opportunity to present this work at the conference ``Operator Analysis: A renaissance (2024)", in Gandhinagar in July 2024, which was funded by Prof. Charian Varughese. Authors are grateful for the support and the valuable feedback received from the mathematicians in attendance.

\end{document}